\documentclass[reqno,11pt]{amsart}

\usepackage[left=1in,right=1in,top=1in,bottom=1in]{geometry}
\usepackage[T1]{fontenc}
\usepackage{lmodern}
\usepackage{microtype}
\microtypesetup{expansion=false}
\usepackage{enumitem}
\usepackage{amsmath}
\usepackage{amssymb}
\usepackage{mathtools}
\usepackage{aliascnt}
\usepackage{graphicx}
\usepackage{needspace}
\usepackage[colorlinks=true,citecolor=blue,linkcolor=blue,urlcolor=blue]{hyperref}
\usepackage[nameinlink,noabbrev]{cleveref}

\numberwithin{equation}{section}

\theoremstyle{plain}
\newtheorem{thm}{Theorem}[section]
\newaliascnt{cor}{thm}

\aliascntresetthe{cor}
\newaliascnt{con}{thm}

\aliascntresetthe{con}
\newaliascnt{lem}{thm}
\newtheorem{lem}[lem]{Lemma}
\aliascntresetthe{lem}
\newaliascnt{prop}{thm}
\newtheorem{prop}[prop]{Proposition}
\aliascntresetthe{prop}
\newaliascnt{assum}{thm}
\newtheorem{assum}[assum]{Assumption}
\aliascntresetthe{assum}

\theoremstyle{definition}
\newaliascnt{defn}{thm}
\newtheorem{defn}[defn]{Definition}
\aliascntresetthe{defn}
\newaliascnt{example}{thm}

\aliascntresetthe{example}
\newaliascnt{q}{thm}

\aliascntresetthe{q}

\theoremstyle{remark}
\newaliascnt{rem}{thm}
\newtheorem{rem}[rem]{Remark}
\aliascntresetthe{rem}

\crefname{thm}{theorem}{theorems}
\Crefname{thm}{Theorem}{Theorems}
\crefname{cor}{corollary}{corollaries}
\Crefname{cor}{Corollary}{Corollaries}
\crefname{con}{conjecture}{conjectures}
\Crefname{con}{Conjecture}{Conjectures}
\crefname{lem}{lemma}{lemmas}
\Crefname{lem}{Lemma}{Lemmas}
\crefname{prop}{proposition}{propositions}
\Crefname{prop}{Proposition}{Propositions}
\crefname{assum}{assumption}{assumptions}
\Crefname{assum}{Assumption}{Assumptions}
\crefname{defn}{definition}{definitions}
\Crefname{defn}{Definition}{Definitions}
\crefname{example}{example}{examples}
\Crefname{example}{Example}{Examples}
\crefname{q}{question}{questions}
\Crefname{q}{Question}{Questions}
\crefname{rem}{remark}{remarks}
\Crefname{rem}{Remark}{Remarks}

\renewcommand{\theenumi}{\roman{enumi}}
\newlist{maincomments}{enumerate}{1}
\setlist[maincomments]{
  label=\textnormal{(\roman*)},
  ref=\roman*,
  leftmargin=*
}
\setlist[enumerate,1]{
  label=\textnormal{(\theenumi)},
  ref=\theenumi,
  leftmargin=*
}

\DeclarePairedDelimiter{\abs}{\lvert}{\rvert}

\DeclarePairedDelimiterX{\inner}[2]{\langle}{\rangle}{#1,#2}
\newcommand{\dd}{\,\mathrm{d}}
\newcommand{\R}{\mathbb R}
\newcommand{\T}{\mathbb T}
\newcommand{\eps}{\varepsilon}
\newcommand{\cH}{\mathcal H}

\newcommand{\dist}{\operatorname{dist}}
\newcommand{\diver}{\operatorname{div}}
\newcommand{\tr}{\operatorname{tr}}
\newcommand{\Id}{\mathrm I}

\hypersetup{
  pdftitle={Parabolic Bethuel compactness and a two-mobility curvature balance},
  pdfauthor={Anonymous},
  pdfkeywords={vectorial Allen--Cahn flow; multi-well potential; rectifiable varifold; two-mobility curvature balance; singular perturbation}
}

\title[Asymptotics for two-dimensional parabolic vectorial Allen-Cahn systems]{Asymptotics for two-dimensional parabolic vectorial Allen-Cahn systems}
\author{Zhiyuan Dai}
\address{School of Mathematical Sciences, Zhejiang University, Hangzhou 310058, China}
\email{12635038@zju.edu.cn}

\author{Haotong Fu}
\address{School of Mathematical Sciences, Peking University, Beijing 100871, China}
\email{2301110012@pku.edu.cn}

\author{Huaijie Wang}
\address{School of Mathematical Sciences, Peking University, Beijing 100871, China}
\email{huaijie\_wang@163.com}

\author{Wei Wang}
\address{School of Mathematical Sciences, Peking University, Beijing 100871, China}
\email{wwmath166@outlook.com}
\email{2201110024@stu.pku.edu.cn}
\date{September 23, 2026}

\begin{document}

\begin{abstract}
We study the parabolic vectorial Allen--Cahn equation in two space
dimensions for a fixed smooth potential with finitely many non-degenerate
wells.  Our result provides a parabolic extension of Bethuel's elliptic
compactness theorem: at almost every positive time, the diffuse energy,
potential energy, and gradient tensor converge to measures concentrated on a
countably \(1\)-rectifiable interface and satisfy all the
Bethuel-type
relations.  In the scalar case, the tangential
defect vanishes and the usual mean-curvature-flow structure is recovered.
For general vectorial potentials, the limiting stress and dissipation yield
a two-mobility system: the normal energy flux balances weighted curvature,
while a tangential internal mobility transports residual tangential energy.
Thus the result extends the Allen--Cahn-to-Brakke framework of Ilmanen from
scalar to vector-valued systems, with an additional non-negative
dissipation defect in the localized energy inequality.
\end{abstract}

\subjclass[2020]{Primary 35B25; Secondary 35K57, 49Q20, 53E10}
\keywords{parabolic vectorial Allen--Cahn, multi-well potential, rectifiable
varifold, mean curvature flow, singular perturbation}

\maketitle

\section{Introduction}

Let \(V:\R^k\to[0,\infty)\) be a smooth potential whose zero set consists of
finitely many non-degenerate wells,
\[
 \Sigma:=V^{-1}(0)=\{\sigma_1,\ldots,\sigma_q\}.
\]
For \(\eps>0\), the associated Allen--Cahn energy on a planar domain
\(\Omega\) is
\[
 E_\eps(u;\Omega)
 :=
 \int_\Omega
 \left(
 \frac{\eps}{2}|\nabla u|^2+\frac1\eps V(u)
 \right)\dd x.
\]
Its critical points solve
\[
 -\eps\Delta u+\frac1\eps DV(u)=0.
\]
In the scalar two-well case, the interfacce is asymptotically
one-dimensional and its gradient and potential energies are asymptotically
equipartitioned.  Equivalently, the discrepancy measure
\[
 \xi_\eps
 :=
 \left(
 \frac{\eps}{2}|\nabla u|^2-\frac1\eps V(u)
 \right)\dd x
\]
vanishes in the sharp-interface limit.  This equipartition is one of the
main structural reasons why the scalar diffuse stress converges to the
standard tangential projection on the limiting interface.

The variational origins of this picture go back to the phase-field model of
Allen and Cahn \cite{AllenCahn1979} and the \(\Gamma\)-convergence result of
Modica and Mortola \cite{ModicaMortola1977}; see also Modica's
minimal-interface criterion and gradient estimate
\cite{Modica1985,Modica1987}.  The extension from two wells to finitely many
wells, and the corresponding compactness and minimal-partition theory, was
developed by Baldo, Fonseca--Tartar, and Kohn--Sternberg
\cite{Baldo1990,FonsecaTartar1989,KohnSternberg1989}.

Bethuel proved a substantially more general elliptic compactness theorem for
bounded-energy critical points of vectorial Allen--Cahn energies in two
space dimensions \cite{Bethuel2025}.  More precisely, after passing to a
subsequence, the diffuse full-energy, potential-energy, and gradient-tensor
measures
\[
 \nu_\eps
 :=
 \left(
 \frac{\eps}{2}|\nabla u_\eps|^2+\frac1\eps V(u_\eps)
 \right)\dd x,
 \qquad
 \zeta_\eps:=\frac1\eps V(u_\eps)\dd x,
\]
and
\[
 \mu_{\eps,ij}
 :=
 \eps\,\partial_i u_\eps\cdot\partial_j u_\eps\,\dd x
\]
concentrate on a countably \(1\)-rectifiable set \(S\).  Writing
\[
 \nu=e\,\mathcal H^1\llcorner S,
 \qquad
 \zeta=\Theta\,\mathcal H^1\llcorner S,
 \qquad
 \mu_{ij}=m_{ij}\,\mathcal H^1\llcorner S,
\]
and denoting by \((\tau,n)\) an approximate tangent--normal frame of \(S\),
Bethuel's relations take the form
\[
 e=m_{nn},
 \qquad
 m_{n\tau}=0,
 \qquad
 2\Theta=m_{nn}-m_{\tau\tau}.
\]
Thus the limiting stress tensor is rank one:
\[
 T:=\nu I-\mu
 =2\Theta\,\tau\otimes\tau\,
 \mathcal H^1\llcorner S.
\]
In contrast with the scalar theory of Hutchinson and Tonegawa
\cite{HutchinsonTonegawa2000}, these identities do not imply equipartition.
Indeed,
\[
 \rho:=e-2\Theta=m_{\tau\tau}\geq0
\]
may be non-zero.  Bethuel's periodic pseudo-profiles show that this
possibility is genuine: oscillations internal to a vector-valued transition
layer may carry a non-trivial amount of tangential gradient energy even
after the layer itself collapses onto a one-dimensional set.  We refer to
\(\rho\) as the \emph{tangential microstructure density}.

The purpose of the present paper is to establish a parabolic counterpart of
Bethuel's compactness theorem.  We consider smooth solutions
\[
 u_\eps:\T^2\times(0,T)\longrightarrow\R^k
\]
of the \(L^2\)-gradient flow of \(E_\eps\),
\begin{equation}\label{eq:intro-flow}
 \partial_tu_\eps
 =\Delta u_\eps-\frac1{\eps^2}DV(u_\eps).
\end{equation}
The corresponding \emph{diffuse energy density} and \emph {dissipation density} are
\[
 e_\eps
 :=
 \frac{\eps}{2}|\nabla u_\eps|^2+\frac1\eps V(u_\eps),
 \qquad
 \alpha_\eps:=\eps|\partial_tu_\eps|^2,
\]
and they satisfy the exact global energy identity
\[
 E_\eps(u_\eps(\cdot,t_2))
 +\int_{t_1}^{t_2}\int_{\T^2}\alpha_\eps\,\dd x\dd t
 =E_\eps(u_\eps(\cdot,t_1)).
\]
The diffuse momentum and stress are defined by
\[
 q_{\eps,i}
 :=
 \eps\,\partial_tu_\eps\cdot\partial_i u_\eps,
 \qquad
 T_{\eps,ij}
 :=
 e_\eps\delta_{ij}
 -\eps\,\partial_i u_\eps\cdot\partial_j u_\eps.
\]
They obey the exact distributional identities
\[
 \operatorname{div}_xT_\eps=-q_\eps,
 \qquad
 \partial_te_\eps=\operatorname{div}_xq_\eps-\alpha_\eps,
\]
where\;$T_{\varepsilon}=(T_{\varepsilon,ij})_{i,j},q_{\varepsilon}=(q_{\varepsilon,i})_{i}.$

These identities provide the basic link between the spatial geometry of the
diffuse interfaces, their energy flux, and the dissipation of the flow.

For scalar Allen--Cahn equations, the formal and smooth-interface limits
were established by Bronsard--Kohn and Chen
\cite{BronsardKohn1991,Chen1992}, while the level-set formulation beyond
singularities was developed by Evans--Soner--Souganidis
\cite{EvansSonerSouganidis1992}.  In the varifold framework, Brakke's
monograph \cite{Brakke1978} and Ilmanen's elliptic-regularization work
\cite{Ilmanen1994} provide the geometric background.  Ilmanen's convergence
theorem \cite{Ilmanen1993} gives the fundamental passage from diffuse
interface dynamics to Brakke's weak motion by mean curvature.  Starting
from a uniform energy bound, Ilmanen associates a varifold to the diffuse
energy and uses the scalar discrepancy estimate, together with a localized
monotonicity formula and the energy-dissipation identity, to obtain
rectifiability of the limiting spatial measures and the Brakke inequality.
The discrepancy control asymptotically identifies the gradient and
potential parts of the energy.  Consequently, the limiting stress is the
standard tangential varifold stress, the diffuse momentum produces a
generalized normal velocity, and the limiting law is the classical
mean-curvature balance
\[
 \mathcal V=H_S,
\]
in the weak Brakke sense.  This argument is intrinsically tied to the
scalar order structure and the resulting discrepancy control.  For a
general fixed vectorial potential, Bethuel's examples show that the
analogous discrepancy need not vanish, so Ilmanen's scalar identification
cannot be applied directly.

A number of important results address vector-valued or multiphase
Allen--Cahn evolutions under additional hypotheses.  For stationary
vectorial systems, layered and multi-junction profiles were constructed in
representative works by Bronsard--Gui--Schatzman and Alikakos--Betel\'u--Chen
\cite{BronsardGuiSchatzman1996,AlikakosBeteluChen2006}; Hamiltonian
identities provide a complementary tool for junction laws
\cite{Gui2008}.  For the evolution problem, Laux and Simon, and more
recently Steinke, obtain weak multiphase mean-curvature evolutions under
assumptions that identify the limiting diffuse energy with the corresponding
interfacial energy
\cite{LauxSimon2018,Steinke2026}.  Fischer and Marveggio prove quantitative
convergence by a relative-entropy method as long as a sufficiently regular
calibrated multiphase flow exists \cite{FischerMarveggio2024}.  Laux and
Takasao obtain convergence to a multiphase Brakke flow for a specially
constructed, \(\eps\)-dependent weakly coupled potential
\cite{LauxTakasao2026}.  These theories capture important regimes, but they
do not address the unconditional sharp-interface compactness of
\eqref{eq:intro-flow} for an arbitrary fixed multi-well potential in
Bethuel's class, where a non-zero tangential density may persist.

Our limiting description therefore retains two distinct interfacial
densities.  At almost every time \(t\), the limiting full and potential
energy measures have the form
\[
 \nu_t=e_t\,\mathcal H^1\llcorner S_t,
 \qquad
 \zeta_t=\Theta_t\,\mathcal H^1\llcorner S_t,
\]
where \(S_t\) is countably \(1\)-rectifiable.  The full-energy density
\(e_t\) governs the transport of total diffuse energy, whereas the
potential-energy density \(\Theta_t\) determines the limiting spatial
stress and hence the weighted first variation.  Their difference
\begin{equation}\label{eq:intro-rho}
 \rho_t:=e_t-2\Theta_t
\end{equation}
is precisely the residual tangential gradient density furnished by
Bethuel's relations.

To formulate the limiting evolution, write the spacetime full-energy
measure as
\[
 \nu:=\nu_t\,\dd t
 =e_t\,\mathcal H^1\llcorner S_t\,\dd t,
\]
and let \(q\) be the weak limit of the diffuse momentum measures
\(q_\eps\,\dd x\dd t\).  Its time disintegration has the form
\[
 q=q_t\,\dd t,
 \qquad
 q_t=
 \bigl(q_{\tau,t}\tau_t+q_{n,t}n_t\bigr)
 \mathcal H^1\llcorner S_t.
\]
Here \(q_{n,t}\) and \(q_{\tau,t}\) denote the normal and tangential
components of the limiting energy momentum.  We define the normal
energy-flux field and the tangential internal-mobility field by
\[
 \mathcal V_t:=-\frac{q_{n,t}}{e_t}\,n_t,
 \qquad
 \mathcal W_t:=-\frac{q_{\tau,t}}{\rho_t}\,\tau_t,
\]
with the convention that \(\mathcal W_t=0\) on \(\{\rho_t=0\}\).  The sharp
action bound proved below forces \(q_{\tau,t}=0\) on this set, so the
definition is consistent.

Let \(H_{S_t}\) denote the normal component of the generalized curvature of
the weighted varifold associated with
\(\Theta_t\,\mathcal H^1\llcorner S_t\), and let
\(\nabla_{S_t}\Theta_t\) denote the corresponding tangential derivative of
the weight.  The two components of the limiting stress balance yield
\begin{equation}\label{eq:intro-two-mobility}
 e_t\mathcal V_t=2\Theta_tH_{S_t},
 \qquad
 \rho_t\mathcal W_t=2\nabla_{S_t}\Theta_t.
\end{equation}
The first identity is the normal curvature balance, while the second
describes tangential transport of the interfacial density carried by the
vectorial microstructure.  The total energy-flux density with respect to
\(\nu\) is consequently
\[
 -\frac{\dd q}{\dd\nu}
 =
 \mathcal V_t+\frac{\rho_t}{e_t}\mathcal W_t.
\]

When \(\rho_t=0\), Bethuel's relations reduce to the equipartition identity
\(e_t=2\Theta_t\); the sharp action bound eliminates the tangential flux,
and \eqref{eq:intro-two-mobility} becomes
\[
 \mathcal V_t=H_{S_t}.
\]
Thus the usual mean-curvature balance and the corresponding Brakke
inequality are recovered on the equipartition region.  When \(\rho_t>0\),
however, the limiting energy possesses an additional tangential transport
channel.  Without a further no-hidden-energy or rank-one hypothesis, neither
\(\mathcal V_t\) nor the total energy-flux field
\(-\dd q/\dd\nu\) is asserted to coincide with the velocity of the reduced
boundaries of the limiting phase partition.  The theorem instead gives an
unconditional evolution law for the full limiting energy, including the
collapsed or oscillatory structures that may be invisible in the reduced
phase map.

\subsection{Main results}
\begin{assum}\label{assum:potential}
The potential $V\in C^\infty(\R^k;[0,\infty))$ satisfies the following
conditions.
\begin{enumerate}
\item The vacuum set is finite:
\[
 \Sigma:=V^{-1}(0)=\{\sigma_1,\ldots,\sigma_q\},
 \quad q\geq2.
\]
\item The Hessian $D^2V(\sigma_a)$ is positive definite for every $a$.
\item There are $R_\infty,\alpha_\infty>0$ such that
\[
 y\cdot DV(y)\geq\alpha_\infty\abs{y}^2
 \quad\text{when }\abs{y}\geq R_\infty,
 \quad
 V(y)\to\infty
 \quad\text{as }\abs{y}\to\infty.
\]
\end{enumerate}
\end{assum}

For a solution of \eqref{eq:intro-flow}, set
\begin{equation}\label{eq:intro-measures}
\begin{aligned}
 \nu_\eps^t
 &:={}
 \left[
 \frac{\eps}{2}\abs{\nabla u_\eps(\cdot,t)}^2
 +\frac{1}{\eps}V(u_\eps(\cdot,t))
 \right]\dd x,\\
 \zeta_\eps^t
 &:={}
 \frac{1}{\eps}V(u_\eps(\cdot,t))\dd x,\\
 \mu_{\eps,ij}^t
 &:={}
 \eps\,\partial_i u_\eps(\cdot,t)\cdot
 \partial_j u_\eps(\cdot,t)\dd x.
\end{aligned}
\end{equation}
The spacetime dissipation and momentum measures are
\begin{equation}\label{eq:intro-action}
 \alpha_\eps
 :=\eps\abs{\partial_tu_\eps}^2\dd x\dd t,
 \quad
 q_{\eps,i}
 :=\eps\,\partial_tu_\eps\cdot\partial_i u_\eps\dd x\dd t.
\end{equation}
The diffuse stress is
\begin{equation}\label{eq:intro-stress}
 T_\eps^t
 :=\nu_\eps^t\Id-(\mu_{\eps,ij}^t)_{i,j=1}^2.
\end{equation}

Our main results are as follows.

\begin{thm}[Parabolic Bethuel-type compactness]\label{thm:main}
Let $V$ satisfy Assumption~\ref{assum:potential}, let $T>0$, and let
$\eps_j\downarrow0$.  Suppose that $u_j:=u_{\eps_j}$ are smooth solutions
of \eqref{eq:intro-flow} satisfying
\begin{equation}\label{eq:initial-energy-bound}
 E_{\eps_j}(u_j(\cdot,0);\T^2)\leq M_0.
\end{equation}
After passing to a subsequence, the following conclusions hold.

\begin{enumerate}
\item There is a spacetime Caccioppoli partition $(E_a)_{a=1}^q$ such that
\begin{equation}\label{eq:phase-convergence}
 u_j\to
 u_0:=\sum_{a=1}^q\sigma_a\mathbf1_{E_a}
 \quad\text{in }L^1(\T^2\times(0,T)).
\end{equation}

\item There are a weakly measurable family of Radon measures $(\nu_t)$ and a
common full-measure set $\mathcal C\subset(0,T)$ such that, after choosing
the right-continuous representative,
\begin{equation}\label{eq:energy-slice-convergence}
 \nu_j^t\stackrel{*}{\rightharpoonup}\nu_t
\end{equation}
for every $t\in\mathcal C$.  Moreover,
\begin{equation}\label{eq:spacetime-measure-convergence}
\begin{aligned}
 \nu_j^t\dd t&\stackrel{*}{\rightharpoonup}\nu_t\dd t,
 &\zeta_j^t\dd t&\stackrel{*}{\rightharpoonup}\zeta_t\dd t,\\
 \mu_{j,ab}^t\dd t&\stackrel{*}{\rightharpoonup}\mu_{ab,t}\dd t,
 &q_j&\stackrel{*}{\rightharpoonup}q,
 &\alpha_j&\stackrel{*}{\rightharpoonup}\alpha.
\end{aligned}
\end{equation}
The notation $\zeta_t$ and $\mu_{ab,t}$ refers to the time disintegration
of the spacetime limits; no separate time-slice convergence is asserted for
these measures.
Set $\nu:=\nu_t\dd t$ for the limiting spacetime full-energy measure.

\item For almost every $t\in(0,T)$ there are a closed countably
$1$-rectifiable set $S_t\subset\T^2$ and densities
$e_t,\Theta_t,m_{ab,t}$ such that
\begin{equation}\label{eq:slice-representations}
 \nu_t=e_t\,\cH^1\llcorner S_t,
 \quad
 \zeta_t=\Theta_t\,\cH^1\llcorner S_t,
 \quad
 \mu_{ab,t}=m_{ab,t}\,\cH^1\llcorner S_t.
\end{equation}
For every $\tau>0$, there is $C_\tau<\infty$ such that
\begin{equation}\label{eq:energy-potential-comparison}
 \zeta_t\leq\nu_t\leq C_\tau\zeta_t
 \quad\text{for almost every }t\in(\tau,T).
\end{equation}
At $\cH^1$-almost every point of $S_t$, let $(\tau_t,n_t)$ be the
approximate tangent--normal frame.  Then
\begin{equation}\label{eq:bethuel-slice-relations}
 e_t=m_{n_tn_t,t},
 \quad
 m_{n_t\tau_t,t}=0,
 \quad
 2\Theta_t=m_{n_tn_t,t}-m_{\tau_t\tau_t,t}.
\end{equation}
In particular,
\begin{equation}\label{eq:rho-definition}
 \rho_t:=e_t-2\Theta_t=m_{\tau_t\tau_t,t}\geq0.
\end{equation}

\item The limiting stress satisfies
\begin{equation}\label{eq:limit-stress}
 T_t
 =2\Theta_t\,\tau_t\otimes\tau_t\,
 \cH^1\llcorner S_t,
 \quad
 \diver_xT_t=-q_t
\end{equation}
for almost every $t$, where $q=q_t\dd t$.  The measure
$q$ is absolutely continuous with respect to $\nu_t\dd t$ and has an
$L^2$ density.

\item Let $\lambda_t:=\cH^1\llcorner S_t$.  On
$\T^2\times(0,T)$ write
\begin{equation}\label{eq:action-decomposition}
 \alpha=a\,\lambda_t\dd t+\alpha^{\rm s},
 \quad
 \alpha^{\rm s}\perp\lambda_t\dd t.
\end{equation}
Write
\begin{equation}\label{eq:momentum-density}
 q_t=\mathbf q_t\,\lambda_t,
 \quad
 \mathbf q_t=q_{\tau,t}\tau_t+q_{n,t}n_t.
\end{equation}
Then
\begin{equation}\label{eq:sharp-action-bound}
 a\geq
 \frac{\abs{q_{n,t}}^2}{e_t}
 +\frac{\abs{q_{\tau,t}}^2}{\rho_t}
 \quad\text{for }\lambda_t\dd t\text{-almost every }(x,t),
\end{equation}
where the second quotient is interpreted as $+\infty$ unless
$q_{\tau,t}=0$ on $\{\rho_t=0\}$.

\item The weighted varifold
$\mathbf V_t:=\mathbf V(S_t,\Theta_t)$ has generalized curvature
\[
 H_{\Theta,t}\in
 L^2_{\rm loc}(\T^2\times(0,T),\zeta_t\dd t;\R^2),
\]
and
\begin{equation}\label{eq:weighted-first-variation}
 \delta\mathbf V_t(X)
 =-\int_{S_t}H_{\Theta,t}\cdot X\dd\zeta_t,
 \quad
 q_t=-2\Theta_tH_{\Theta,t}\lambda_t.
\end{equation}
On every regular branch in the sense of
Definition~\ref{defn:regular-branch},
\begin{equation}\label{eq:weighted-curvature-splitting}
 H_{\Theta,t}
 =H_{S_t}+\nabla_{S_t}\log\Theta_t.
\end{equation}
Globally, the notation in the following formulas means
\begin{equation}\label{eq:curvature-projections}
 H_{S_t}:=(n_t\otimes n_t)H_{\Theta,t},
 \quad
 \nabla_{S_t}\Theta_t
 :=\Theta_t(\tau_t\otimes\tau_t)H_{\Theta,t}.
\end{equation}
Define the normal energy-flux field $\mathcal{V}_t$ and the tangential
internal-mobility field $\mathcal{W}_t$ by
\begin{equation}\label{eq:velocity-definitions}
 \mathcal{V}_t:=-\frac{q_{n,t}}{e_t}n_t,
 \quad
 \mathcal{W}_t:=-\frac{q_{\tau,t}}{\rho_t}\tau_t.
\end{equation}
On $\{\rho_t=0\}$, set $\mathcal W_t=0$; the action bound forces
$q_{\tau,t}=0$ there.
Thus the total energy-flux density decomposes as
\begin{equation}\label{eq:total-energy-flux}
 -\frac{\dd q}{\dd\nu}
 =\mathcal{V}_t+\frac{\rho_t}{e_t}\mathcal{W}_t.
\end{equation}
Then
\begin{equation}\label{eq:two-mobility-law}
e_t\mathcal{V}_t=2\Theta_tH_{S_t},
 \quad
 \rho_t\mathcal{W}_t=2\nabla_{S_t}\Theta_t.
\end{equation}

\item For every non-negative
$\phi\in C^1(\T^2\times[0,T])$ and almost every
$0<t_1<t_2<T$,
\begin{equation}\label{eq:localized-flow-inequality}
\begin{aligned}
 &\int_{S_{t_2}}\phi(\cdot,t_2)e_{t_2}\dd\lambda_{t_2}
 -\int_{S_{t_1}}\phi(\cdot,t_1)e_{t_1}\dd\lambda_{t_1}
 \\
 &\leq
 \int_{t_1}^{t_2}\int_{S_t}
 \Bigg[
 e_t\partial_t\phi
 +2\Theta_tH_{S_t}\cdot\nabla\phi
 +2\nabla_{S_t}\Theta_t\cdot\nabla_{S_t}\phi
 \\
 &\hspace{7em}
 -\phi\left(
 \frac{4\Theta_t^2}{e_t}\abs{H_{S_t}}^2
 +\frac{4}{\rho_t}\abs{\nabla_{S_t}\Theta_t}^2
 \right)
 \Bigg]\dd\lambda_t\dd t.
\end{aligned}
\end{equation}
The last quotient follows the same zero-denominator convention as
\eqref{eq:sharp-action-bound}.  The sharper identity contains the additional
non-positive term $-\int\phi\dd\gamma$, where
\begin{equation}\label{eq:dissipation-defect}
 \gamma
 :=\alpha-
 \left(e_t\abs{\mathcal{V}_t}^2+\rho_t\abs{\mathcal{W}_t}^2\right)
 \lambda_t\dd t
 \geq0.
\end{equation}
\end{enumerate}
\end{thm}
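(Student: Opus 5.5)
The plan is to reduce the parabolic problem, at almost every time, to an elliptic problem with an $L^2$-controlled right-hand side. Bethuel's two-dimensional compactness machinery \cite{Bethuel2025} should survive such a perturbation. The dynamical statements (iv)--(vii) then follow by passing to the limit in the exact identities $\diver_xT_\eps=-q_\eps$ and $\partial_te_\eps=\diver_xq_\eps-\alpha_\eps$, combined with Cauchy--Schwarz lower semicontinuity.

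\emph{Compactness, items (i)--(ii).} The energy identity gives $\sup_tE_{\eps_j}(u_j(t))\le M_0$ and $\int_0^T\!\int\alpha_j\le M_0$. The maximum principle, using Assumption~\ref{assum:potential}(iii), bounds $\|u_j\|_\infty$ for $t>0$. Phase convergence follows from the Modica--Mortola/Baldo argument applied to $\Phi(u_j)$, with $\Phi$ the geodesic distance to the wells in the metric $\sqrt{2V}$. This gives a $BV$ bound in $x$, and the bound $|\partial_t\Phi(u_j)|\le\sqrt{2V}|\partial_tu_j|\le e_j+\alpha_j$ gives a $BV$ bound in $t$. The functions $t\mapsto\int\phi\,\dd\nu_j^t-Ct$ are nonincreasing for each test function $\phi$, because $\frac{\dd}{\dd t}\int\phi e_j=-\int\nabla\phi\cdot q_j-\int\phi\alpha_j\le C_\phi\int e_j$. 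Helly's theorem over a countable dense family of $\phi$ then yields convergence at every time outside a countable set, and hence the common full-measure set $\mathcal C$ with right-continuous representatives. The remaining spacetime limits come from Banach--Alaoglu, and $|q_j|\le\alpha_j^{1/2}(2e_j)^{1/2}$ gives $q\ll\nu$ with an $L^2(\nu)$ density.

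\emph{Slice structure, item (iii).} Fatou's lemma gives $\liminf_j\int\alpha_j(\cdot,t)\dd x<\infty$ for a.e.\ $t$. Along a $t$-dependent subsequence, $u_j(\cdot,t)$ then solves $-\eps\Delta u+\eps^{-1}DV(u)=f_j$ with $\|\sqrt{\eps}f_j\|_{L^2}$ bounded, since $f_j=-\eps\partial_tu_j$. I would rerun Bethuel's argument under this perturbation: the clearing-out/$\eta$-compactness lemma, the Pohozaev-type monotonicity, and the identification $e=m_{nn}$, $m_{n\tau}=0$, $2\Theta=m_{nn}-m_{\tau\tau}$. The forcing contributes an error to the monotonicity formula of size $r\,(\int_{B_r}\alpha_j)^{1/2}(\int_{B_r}e_j)^{1/2}/r$, which is integrable in $r$. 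The result is density bounds and rectifiability of $S_t$, together with the comparison $\nu_t\le C_\tau\zeta_t$; the latter comes from the lower density bound for the potential part away from $t=0$.

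This step is the main obstacle. The elliptic arguments rely on the equation being pointwise exact, and the subsequence for each slice must be reconciled with the full subsequence. I would handle the second issue by proving that the slice limits along the full sequence are dominated by, and equal to, the spacetime disintegrations for a.e.\ $t$. To do this I would apply the elliptic result to slices chosen from a Fatou set of positive measure and use a Lebesgue-point argument in $t$ for $\int\phi\,\dd\zeta_j^t$.

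\emph{Items (iv)--(vii).} The stress formula follows algebraically from (iii):
\[
T_t=(e_t-m_{nn})n\otimes n+(e_t-m_{\tau\tau})\tau\otimes\tau=2\Theta_t\,\tau\otimes\tau,
\]
and $\diver T=-q$ passes to the limit weakly. Since $T_t$ is exactly the weighted varifold stress of $\mathbf V(S_t,\Theta_t)$, we get $\delta\mathbf V_t(X)=-\int X\cdot \dd q_t/2$. The $L^2$ bound on $q$ therefore gives $H_{\Theta,t}$ with $q_t=-2\Theta_tH_{\Theta,t}\lambda_t$. Projecting onto $n$ and $\tau$ gives \eqref{eq:two-mobility-law}, and on regular branches the usual computation gives \eqref{eq:weighted-curvature-splitting}.

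For the sharp action bound, I would take the limit of the pointwise inequality $\eps|\partial_tu\cdot\partial_iu\,\xi_i|^2\le\alpha_\eps\,\eps|\partial_\xi u|^2$, weighted with test functions. The Reshetnyak/Cauchy--Schwarz lower semicontinuity for the pair $(q_j\cdot\xi,\alpha_j,\mu_j(\xi,\xi))$ gives $(\mathbf q\cdot\xi)^2\le a\,m_{\xi\xi}$ for every $\xi$. Optimizing over $\xi=s n+r\tau$, with $m$ diagonal equal to $(e,\rho)$, yields \eqref{eq:sharp-action-bound}, including $q_\tau=0$ where $\rho=0$.

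Finally, integrating $\partial_te_j=\diver q_j-\alpha_j$ against $\phi$, passing to the limit at times in $\mathcal C$, and substituting $q_t$ and $a\ge e|\mathcal V|^2+\rho|\mathcal W|^2$ gives \eqref{eq:localized-flow-inequality}, with defect measure $\gamma\ge0$.
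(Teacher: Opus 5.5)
\textbf{Gap: item (iii) is not established for the disintegrated measures, and your proposed fix does not work.} This is exactly the step you flag.

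In the theorem, $\zeta_t$ and $\mu_{ab,t}$ are the time disintegrations of the spacetime limits. Everything downstream in your plan needs the Bethuel relations and $\nu_t\le C_\tau\zeta_t$ for these disintegrated measures: the algebraic formula for $T_t$, the statement $q\ll\zeta_t$ with $L^2$ density, and the diagonal form $m=\operatorname{diag}(\rho_t,e_t)$ in the action bound. The forced elliptic theorem at a fixed $t$ gives these only for limits of $\zeta_j^t,\mu_j^t$ along a $t$-dependent subsequence with bounded slice dissipation $D_j(t)=\int\eps_j|\partial_tu_j(\cdot,t)|^2\,\dd x$.

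Only $\nu_j^t$ has time regularity uniform in $j$, coming from $\partial_te_j=\diver q_j-\alpha_j$. For the potential part, $\frac{\dd}{\dd t}\int\phi\,\dd\zeta_j^t=\eps_j^{-1}\int\phi\,DV(u_j)\cdot\partial_tu_j$ is bounded only by $C\eps_j^{-1}D_j(t)^{1/2}$. So nothing prevents $\zeta_j^t$ and $\mu_j^t$ from oscillating in $t$ on scales that shrink with $j$. Consequently:
\begin{itemize}
\item slice limits of $\zeta_j^t,\mu_j^t$ along the full sequence need not exist;
\item the disintegration is in general a time average that differs from every slice-wise subsequential limit;
\item a Lebesgue-point argument in $t$ for $\int\phi\,\dd\zeta_j^t$ cannot be made uniform in $j$;
\item averaging over nearby slices $s$ mixes elliptic information referring to different carriers $S_s$.
\end{itemize}

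\textbf{How the paper closes it.} The paper never identifies slice limits of $\zeta_j^t,\mu_j^t$. From the slice theorem it uses only the rectifiability of $\nu_t$. That is unambiguous because $\nu_t$ was fixed along the common subsequence by Helly. The rest comes from linear relations that pass to spacetime weak-$*$ limits.
\begin{itemize}
\item The forced comparison $\nu_j^t(B_{r/2})\le C_\tau\{\zeta_j^t(B_{3r/4})+\frac{\eps_j}{r}\nu_j^t(B_{3r/4}\setminus B_{r/2})+\eps_j^2\int_{B_r}\eps_j|\partial_tu_j|^2\}$ holds at every $t\ge\tau$, with no smallness or slice-wise dissipation assumption. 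Integrated against $\eta(t)\ge0$, its forcing error is at most $C\eps_j^2M_0$. Differentiating the limit inequality at rectifiable points gives $\nu_t\le C_\tau\zeta_t$.
\item $\diver_xT_t=-q_t$ and $\tr T_t=2\zeta_t$ follow by disintegration. Blow up $T_t=A_t\lambda_t$ at $\cH^1$-a.e.\ $x\in S_t$. The rescaled divergences $-(\eta_{x,r})_\#q_t$ vanish because $q_t\ll\lambda_t$ has no atoms, so $A_t(x)\cH^1\llcorner L_x$ is divergence free. This forces $A_t(x)n_t=0$, hence $A_t=2\Theta_t\tau_t\otimes\tau_t$, which is the Bethuel structure for the disintegrated measures.
\end{itemize}

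\textbf{Alternative repair of your route.} Your idea could instead be rescued by a Young-measure argument, with some care about measurability in $t$. The pairs $(\zeta,\mu)$ compatible with $\nu_t$ and $S_t$ form a convex, weak-$*$ closed set $K_t$. The distance of $(\zeta_j^t,\mu_j^t)$ to $K_t$ tends to zero in measure in $t$: on $\{D_j(t)\le\Lambda\}$ this follows from the elliptic theorem plus compactness, and $|\{t:D_j(t)>\Lambda\}|\le M_0/\Lambda$. The disintegrations are the barycentres of the generated Young measure, and so lie in $K_t$.

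\textbf{Further corrections.}
\begin{itemize}
\item A lower density bound for the potential part does not by itself bound $e_t/\Theta_t$; you would also need a uniform upper density bound for $\nu_t$.
\item The energy monotonicity you invoke for density bounds is not available for vectorial potentials, because the limiting discrepancy is $\rho_t\ge0$, which has the wrong sign.
\item With $\diver_xT_t=-q_t$ one has $\delta\mathbf V_t(X)=+\frac12\int X\cdot\dd q_t$, not $-\frac12\int X\cdot\dd q_t$.
\end{itemize}

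The rest of your outline is sound once (iii) holds for the disintegrated measures. This includes the Ilmanen-type monotone Helly argument and the family of $2\times2$ Cauchy--Schwarz bounds, which is equivalent to the paper's Gram-matrix positivity.
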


\begin{rem}[Relation to the elliptic and scalar theories]
\label{rem:main-comments}
The theorem should be viewed as a parabolic extension of Bethuel's
elliptic compactness theorem.  At almost every positive time, the spatial
energy, potential energy, and gradient tensor satisfy the same
Bethuel-type relations, while the parabolic dissipation determines how the
corresponding rectifiable interface evolves.  In the vectorial setting, the
possible residual density \(\rho_t\) produces an additional tangential
internal-mobility channel.  Consequently,
\[
 -\frac{\dd q}{\dd\nu}
 =
 \mathcal V_t+\frac{\rho_t}{e_t}\mathcal W_t
\]
is the total energy-flux field and, in general, need not coincide with the
velocity of the reduced boundary of the limiting phase partition.

In the scalar equipartition regime, one has
\[
 \rho_t=0,\qquad q_{\tau,t}=0,
\qquad e_t=2\Theta_t.
\]
Hence the tangential channel disappears,
\[
 \mathcal V_t=H_{S_t},
\]
and the localized energy inequality reduces, up to the surface-energy
factor \(2\Theta_t\), to the usual Brakke inequality.  Thus the present
result recovers the scalar Allen--Cahn convergence framework of
Ilmanen~\cite{Ilmanen1993} while extending it to general vector-valued
multi-well potentials, where tangential microstructure and a non-negative
dissipation defect may persist.
\end{rem}

\subsection{Proof strategy and organization}

Bethuel’s elliptic theorem suggests a natural approach to the parabolic problem, namely to recover the spatial structure of the interface at almost every time. The difficulty is that a time slice now satisfies an elliptic equation with forcing $F_\varepsilon=-\varepsilon\partial_tu_\varepsilon$. It would be tempting to regard this as a small perturbation of the stationary problem, since a bound on $\int |F_\varepsilon|^2/\varepsilon$ implies that $F_\varepsilon\to0$ in $L^2$. This smallness, however, does not carry over to the stress balance. The gradients concentrate in the transition layers, and $F_\varepsilon\cdot\nabla u_\varepsilon$ may have a nonzero limit. A forcing term that disappears in the equation can therefore still contribute to the limiting curvature balance. We extend Bethuel’s clearing-out estimates while retaining these contributions at the scale $\int |F_\varepsilon|^2/\varepsilon$, which agrees exactly with the parabolic dissipation. Unlike the stationary theory, our result allows the limiting stress to have nonzero divergence. Fortunately, its singular part still has rank one in two dimensions, and the rectifiable structure survives. There is a further issue in transferring this elliptic analysis to the evolution. The dissipation estimate provides only the spacetime bound
\[
\int_0^T\!\int \frac{|F_\varepsilon|^2}{\varepsilon}\,dx\,dt
=\int_0^T\!\int \varepsilon|\partial_tu_\varepsilon|^2\,dx\,dt\le C,
\]
rather than a uniform bound at each individual time. Consequently, the subsequence extracted to analyze a fixed time may depend on that time, and such time-dependent choices cannot yield a coherent evolution law. To avoid this problem, we first select the full-measure set of times at which the energy slices converge, and then extract a single subsequence from the spacetime measures
\[
\frac{\varepsilon}{2}|\nabla u_\varepsilon|^2\,dx\,dt
\quad\text{and}\quad
\frac1\varepsilon V(u_\varepsilon)\,dx\,dt.
\]
Their disintegration in time simultaneously identifies the limiting potential and gradient-tensor measures on almost every slice. This common spacetime limit is what allows the elliptic structure theorem to be applied consistently along the evolution and the resulting geometric balance law to be formulated in time.

The difference from the scalar theory is clearest in the evolution law. In the scalar case, equipartition gives $\nu_t=2\zeta_t$, so the full energy density and the weight in the limiting stress are the same up to the fixed factor $2$. In the vectorial setting, we only know the weaker comparison $\zeta_t\le \nu_t\le C\zeta_t$. Thus, $\nu_t$ and $\zeta_t$ are concentrated on the same rectifiable interface, but their densities need not coincide. This distinction reflects the fact that the energy stored in tangential oscillations inside the transition layer may survive in the limit. As Bethuel’s examples show, the collapse of the layer does not necessarily eliminate this internal tangential energy. We therefore have to retain the residual energy $\rho_t=e_t-2\Theta_t$ in the limiting description. Consequently, the energy flux need not be purely normal. For this reason, the dissipation estimate must involve the full spatial gradient tensor, rather than only the total flux. A bound on the total flux would not separate the normal component from the tangential component. The positivity of the spacetime Gram matrix provides exactly this separation. Combining it with the stress balance gives the normal evolution law $e_t\mathcal V_t=2\Theta_t H_{S_t}$, as well as a tangential internal-mobility law. The residual energy also changes the normal balance itself---the transported energy has density $e_t$, while the stress is weighted by $2\Theta_t$. These evolution laws therefore describe the full limiting energy, including possible concentrations that are not visible in the limiting phase partition. Finally, one obtains a localized energy inequality with a non-negative dissipation defect. If equipartition does hold, then the same dissipation estimate forces the tangential flux to vanish. In that case, the normal law reduces to the usual mean-curvature balance, and the standard Brakke inequality follows.

\medskip
\noindent
\textbf{Step 1: Forced energy decay and clearing-out.}
We first study the forced elliptic equation
\[
 -\varepsilon\Delta v+\frac{1}{\varepsilon}DV(v)=F
\]
at the natural action scale
\[
 \mathcal D_\varepsilon(F):=\int\frac{|F|^2}{\varepsilon}.
\]
A good-circle argument shows that small boundary energy forces the trace of
\(v\) to remain close to a single well.  Target-space localization near the
wells, a planar level-set argument, and a forced Pohozaev identity then give
the decay estimate
\[
 E_\varepsilon(v;B_{1/2})
 \leq C\left(
 E_\varepsilon(v;B_1)^{3/2}
 +\varepsilon E_\varepsilon(v;B_1)
 +\sqrt{\mathcal D_\varepsilon(F;B_1)E_\varepsilon(v;B_1)}
 +\varepsilon^2\mathcal D_\varepsilon(F;B_1)
 \right).
\]
A dyadic iteration converts this estimate into a clearing-out theorem:
sufficiently small normalized energy and forcing confine \(v\) to a single
well and yield strong decay on smaller balls.

\medskip
\noindent
\textbf{Step 2: Forced planar Bethuel compactness.}
The clearing-out theorem implies a positive lower one-dimensional density
for every non-trivial limiting energy measure.  A \(5r\)-covering argument
therefore gives locally finite \(\mathcal H^1\)-measure of its support and
shows that the limiting energy is singular with respect to two-dimensional Lebesgue
measure.  An additional local estimate gives the mutual absolute
continuity of the limiting full and potential energies,
\[
 \zeta\leq\nu\leq C\zeta.
\]
For the diffuse stress
\[
 T_\varepsilon
 =e_\varepsilon(v)I-\varepsilon\,\nabla v^{\mathsf T}\nabla v,
\]
the equation gives the exact identities
\[
 \operatorname{div}T_\varepsilon=F\cdot\nabla v,
 \qquad
 \operatorname{tr}T_\varepsilon=\frac{2}{\varepsilon}V(v).
\]
The structure theorem for divergence-constrained measures forces the polar
of the singular limiting stress to have rank one.  Allard's theorem then
gives a countably \(1\)-rectifiable carrier \(S\), and
\[
 T=2\Theta\,\tau\otimes\tau\,
 \mathcal H^1\llcorner S.
\]
Comparison with \(T=\nu I-\mu\) in the tangent--normal frame
\((\tau,n)\) yields the Bethuel relations
\[
 e=m_{nn},
 \qquad
 m_{n\tau}=0,
 \qquad
 2\Theta=m_{nn}-m_{\tau\tau}.
\]
In particular, the residual tangential density
\[
 \rho:=e-2\Theta=m_{\tau\tau}
\]
is non-negative but need not vanish.

\medskip
\noindent
\textbf{Step 3: Spacetime compactness and almost-every-time structure.}
For the parabolic equation, the exact energy identities are
\[
 \operatorname{div}_xT_\varepsilon=-q_\varepsilon,
 \qquad
 \partial_te_\varepsilon
 =\operatorname{div}_xq_\varepsilon-\alpha_\varepsilon,
\]
where
\[
 q_{\varepsilon,i}
 =\varepsilon\partial_tu_\varepsilon\cdot\partial_i u_\varepsilon,
 \qquad
 \alpha_\varepsilon
 =\varepsilon|\partial_tu_\varepsilon|^2.
\]
The global energy law provides uniform bounds for both the diffuse energy
and the spacetime dissipation.  Weighted distances to the wells are bounded
in spacetime \(BV\), which gives convergence to a Caccioppoli partition.
Helly compactness applied to the time-dependent energy measures produces
limiting slices \(\nu_t\).

For almost every \(t>0\), the dissipation satisfies
\[
 \liminf_{\varepsilon\downarrow0}
 \int\varepsilon|\partial_tu_\varepsilon(\cdot,t)|^2<\infty.
\]
At such a time, \(u_\varepsilon(\cdot,t)\) solves the forced elliptic
equation with
\[
 F_\varepsilon=-\varepsilon\partial_tu_\varepsilon,
 \qquad
 \int\frac{|F_\varepsilon|^2}{\varepsilon}
 =\int\varepsilon|\partial_tu_\varepsilon|^2.
\]
The forced Bethuel theorem therefore applies and yields a rectifiable
interface \(S_t\) and the Bethuel structure on almost every time slice.
A spacetime energy--potential comparison then identifies the disintegrated
limits \(\zeta_t\) and \(\mu_t\) with measures supported on the same set
\(S_t\).

\medskip
\noindent
\textbf{Step 4: Sharp dissipation and the two-mobility law.}
Consider the positive semidefinite spacetime Gram matrix
\[
 \mathcal G_\varepsilon
 =
 \varepsilon
 \begin{pmatrix}
  \partial_tu_\varepsilon\\
  \partial_1u_\varepsilon\\
  \partial_2u_\varepsilon
 \end{pmatrix}
 \begin{pmatrix}
  \partial_tu_\varepsilon\\
  \partial_1u_\varepsilon\\
  \partial_2u_\varepsilon
 \end{pmatrix}^{\!\mathsf T}.
\]
Positivity passes to the limit.  Since the spatial block is
\[
 \mu_t
 =\rho_t\,\tau_t\otimes\tau_t
  +e_t\,n_t\otimes n_t,
\]
the Schur-complement inequality gives the sharp action bound
\[
 a\geq
 \frac{|q_{n,t}|^2}{e_t}
 +\frac{|q_{\tau,t}|^2}{\rho_t}.
\]
On the other hand, the limiting stress identity identifies the momentum
with the first variation of the weighted varifold
\(\mathbf V(S_t,\Theta_t)\):
\[
 q_t=-2\Theta_tH_{\Theta,t}\,
 \mathcal H^1\llcorner S_t.
\]
On a regular branch,
\[
 H_{\Theta,t}
 =H_{S_t}+\nabla_{S_t}\log\Theta_t.
\]
Separating normal and tangential components and defining
\[
 \mathcal V_t=-\frac{q_{n,t}}{e_t}n_t,
 \qquad
 \mathcal W_t=-\frac{q_{\tau,t}}{\rho_t}\tau_t,
\]
we obtain
\[
 e_t\mathcal V_t=2\Theta_tH_{S_t},
 \qquad
 \rho_t\mathcal W_t=2\nabla_{S_t}\Theta_t.
\]
The first identity describes normal curvature motion, while the second
records tangential transport of the residual vectorial microstructure.

\medskip
\noindent
\textbf{Step 5: Localized energy inequality.}
Finally, we pass to the limit in the exact localized diffuse-energy
identity.  The momentum decomposition and the two mobility identities
identify the limiting flux terms, while the sharp action bound gives the
non-negative defect measure
\[
 \gamma
 :=
 \alpha-
 \left(
 e_t|\mathcal V_t|^2
 +\rho_t|\mathcal W_t|^2
 \right)
 \mathcal H^1\llcorner S_t\,\dd t
 \geq0.
\]
Substitution yields the localized energy inequality, with the two
dissipation channels
\[
 \frac{4\Theta_t^2}{e_t}|H_{S_t}|^2
 \qquad\text{and}\qquad
 \frac{4}{\rho_t}|\nabla_{S_t}\Theta_t|^2.
\]
When \(\rho_t=0\), finite action forces the tangential channel to vanish,
\(e_t=2\Theta_t\), and the normal law reduces to
\(\mathcal V_t=H_{S_t}\), recovering the usual Brakke inequality.

\medskip
\noindent
\textbf{Organization of the paper.}
The remainder of the paper follows the preceding strategy.  Section~2
develops the forced clearing-out theory by Bethuel's arguments.  Section~3 proves the forced planar Bethuel
compactness theorem by combining the clearing-out mechanism with the
energy--potential comparison, the rank-one structure of the limiting
stress, and varifold rectifiability.  Section~4 returns to the parabolic
problem: it establishes the diffuse balance laws, positive-time estimates,
phase and energy compactness, and the almost-every-time rectifiable
Bethuel structure.  Section~5 studies the evolution of the limiting
measures.  It proves the sharp action lower bound, identifies the weighted
curvature and the two mobility fields, derives the localized energy
inequality and its dissipation defect, and concludes with the balance law
at finite junctions.  The main theorem is obtained by assembling these
elliptic, compactness, and measure-evolution results.

\section{Forced energy decay and clearing-out}
\label{sec:forced-clearing}

The purpose of this chapter is to show that, when both the total energy and
the forcing are sufficiently small at the natural scale
\[
D_\eps(F;B_1):=\int_{B_1}\frac{|F|^2}{\eps}\,\dd x,
\]
the solution \(v\) remains close to a single well of the potential in the
interior, and its energy enjoys an additional \(\eps^2\)-scale decay.

The proof is divided into three stages. 

In the first stage, we establish a
unit-scale \(3/2\)-decay estimate.  We first use the coarea formula to select
a circle with small energy.  Combining the near-well energy estimate with a
planar level-set projection argument, we then construct an inner circle
which is entirely contained in a single well region.  The forced Pohozaev
identity is subsequently used to control the interior potential energy.  This
yields
\[
E_\eps(v;B_{1/2})
\leq
C\left(
E_\eps(v;B_1)^{3/2}
+\eps E_\eps(v;B_1)
+D_\eps(F;B_1)^{1/2}E_\eps(v;B_1)^{1/2}
+\eps^2D_\eps(F;B_1)
\right).
\]

In the second stage, this estimate is rescaled to dyadic balls and iterated.
When the radius reaches the order of \(\sqrt{\eps}\), the energy satisfies the
weak clearing-out criterion.  The latter follows from the gradient bound and
the positivity of the potential away from the wells, and yields clearing-out at
the center.

In the final stage, the center result is applied after rescaling
around every point of \(B_{3/4}\).  Continuity of \(v\), together with the
connectedness of \(B_{3/4}\), then implies that the entire image of the
interior disk lies in one and the same well.  Finally, testing the equation
with \(\chi^2(v-\sigma)\), where \(\chi\) is a cutoff supported in
\(B_{3/4}\), gives the strong inner energy estimate on \(B_{5/8}\).  Apart
from the explicit error terms generated by the forcing, the argument follows
the energy-decay and clearing-out framework developed by Bethuel.

\subsection{Setting and relation with the parabolic equation}

For an open set \(U\subset B_1\), define
\[
 e_\eps(v)
 :=
 \frac{\eps}{2}|\nabla v|^2+\frac1\eps V(v),
 \qquad
 E_\eps(v;U):=\int_U e_\eps(v)\,\dd x .
\]
Throughout this section we assume that
\[
 v\in C^2(B_1;\R^m)\cap C^1(\overline{B_1};\R^m),
 \qquad
 F\in L^2(B_1;\R^m),
\]
and that
\begin{equation}\label{eq:forced-elliptic}
-\eps\Delta v+\frac1\eps DV(v)=F
\qquad\text{in }B_1
\end{equation}
in the distributional sense.

\begin{rem}[Relation with the parabolic problem]
\label{rem:forced-parabolic}

The parameter \(\eps\) in the elliptic equation is the same parameter
as in the parabolic Allen--Cahn equation \eqref{eq:intro-flow}.  At a
fixed time \(t\), the latter can be rewritten as
\[
-\eps\Delta u_\eps(\cdot,t)
+\frac1\eps DV(u_\eps(\cdot,t))
=
-\eps\partial_tu_\eps(\cdot,t).
\]
Thus, in the parabolic application,
\[
v=u_\eps(\cdot,t),
\qquad
F=-\eps\partial_tu_\eps(\cdot,t),
\]
and therefore
\[
\int_{B_1}\frac{|F|^2}{\eps}\,\dd x
=
\int_{B_1}\eps|\partial_tu_\eps(\cdot,t)|^2\,\dd x .
\]
This is precisely the time-slice dissipation density.

When \(F=0\), the arguments below reduce to the elliptic estimates
developed by Bethuel in Sections~2--6 of \cite{Bethuel2025}.  Each
subsequent result contains a remark identifying the corresponding
statement in that paper.
\end{rem}

Fix \(\mu>0\), depending only on \(V\), so small that the closed balls
\(\overline{B_\mu(\sigma_a)}\), \(a=1,\ldots,q\), are pairwise disjoint.
We also assume that, for suitable constants \(c_V,C_V>0\),
\begin{equation}
\label{eq:forced-well-coercivity}
\begin{aligned}
c_V|y-\sigma_a|^2
&\leq V(y)\leq C_V|y-\sigma_a|^2,\\
DV(y)\cdot(y-\sigma_a)
&\geq c_V|y-\sigma_a|^2
\end{aligned}
\end{equation}
whenever \(|y-\sigma_a|\leq\mu\).

For every fixed \(L<\infty\), after decreasing \(\mu\) if necessary,
\[
c_0(V,L,\mu)
:=
\inf\left\{
V(y):
|y|\leq L,\ 
\dist(y,\Sigma)\geq\frac{\mu}{4}
\right\}>0 .
\]
Equivalently,
\begin{equation}
\label{eq:forced-potential-away}
V(y)\geq c_0(V,L,\mu)
\quad\text{if } |y|\leq L,\ 
\dist(y,\Sigma)\geq\frac{\mu}{4}.
\end{equation}

\begin{thm}[Forced energy decay and clearing-out]
\label{thm:forced-clearing}

Let \(L<\infty\).  There exist constants
\[
\eta_{\rm f}=\eta_{\rm f}(V,L)>0,\qquad
\gamma_{\rm f}=\gamma_{\rm f}(V,L)>0,\qquad
C_{\rm f}=C_{\rm f}(V,L)<\infty
\]
such that the following holds.

Assume \(0<\eps\leq1\), \(F\in L^2(B_1;\R^m)\), \(v\) solves
\eqref{eq:forced-elliptic}, and
\begin{equation}
\label{eq:forced-uniform-bound}
\|v\|_{L^\infty(B_1)}
+\eps\|\nabla v\|_{L^\infty(B_1)}
\leq L .
\end{equation}
If
\begin{equation}
\label{eq:forced-smallness}
E_\eps(v;B_1)\leq2\eta_{\rm f},
\qquad
\int_{B_1}\frac{|F|^2}{\eps}\,\dd x
\leq\gamma_{\rm f},
\end{equation}
then there exists a single \(\sigma\in\Sigma\) such that
\begin{equation}
\label{eq:forced-confinement}
|v-\sigma|\leq\frac{\mu}{2}
\qquad\text{on }B_{3/4}.
\end{equation}

Moreover,
\begin{equation}
\label{eq:forced-strong-inner}
E_\eps\left(v;B_{5/8}\right)
\leq
C_{\rm f}\eps^2
\left\{
E_\eps(v;B_1)
+\int_{B_1}\frac{|F|^2}{\eps}\,\dd x
\right\}.
\end{equation}

Before confinement is established, one has
\begin{equation}
\label{eq:forced-unit-decay}
\begin{aligned}
E_\eps\left(v;B_{1/2}\right)
\leq C_{\rm f}\bigg\{&
E_\eps(v;B_1)^{3/2}
+\eps E_\eps(v;B_1)\\
&+
\left(
\int_{B_1}\frac{|F|^2}{\eps}\,\dd x
\right)^{1/2}
E_\eps(v;B_1)^{1/2}\\
&+
\eps^2\int_{B_1}\frac{|F|^2}{\eps}\,\dd x
\bigg\}.
\end{aligned}
\end{equation}
\end{thm}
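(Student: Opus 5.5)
The proof follows the three-stage scheme announced at the start of this section, keeping track of the forcing at the scale \(D_\eps(F;B_1)\).

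\textbf{Stage 1: the unit-scale decay estimate.} I would prove \eqref{eq:forced-unit-decay} first.
\begin{enumerate}
\item By the coarea formula, pick \(r\in(1/2,3/4)\) with
\[
\int_{\partial B_r}e_\eps(v)\,\dd\cH^1\leq 4E_\eps(v;B_1).
\]
\item On \(\partial B_r\), the Modica-type inequality \(\frac{\eps}{2}|\partial_\tau v|^2+\frac1\eps V(v)\geq |\partial_\tau\,\phi(v)|\), with \(\phi\) a geodesic distance for the metric \(\sqrt{2V}\), bounds the oscillation of \(\phi(v)\) along the circle by \(CE_\eps(v;B_1)\).
\item Since \(\phi\) separates the wells, small energy forces the whole trace \(v|_{\partial B_r}\) into a single well region \(B_\mu(\sigma)\). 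The bound \eqref{eq:forced-uniform-bound} (\(|v|\leq L\)) and \eqref{eq:forced-potential-away} exclude large excursions. Here I use the planar level-set projection, exactly as in Bethuel's argument.
\item With the trace confined, use the forced Pohozaev identity on \(B_r\): test \eqref{eq:forced-elliptic} with \(x\cdot\nabla v\). This gives
\[
\frac{2}{\eps}\int_{B_r}V(v)
= r\int_{\partial B_r}\Bigl(e_\eps(v)-\eps|\partial_\nu v|^2\Bigr)
+\int_{B_r}F\cdot(x\cdot\nabla v).
\]
\item Bound the forcing term by Cauchy--Schwarz:
\[
\int_{B_r}|F|\,|\nabla v|\leq D_\eps(F)^{1/2}\Bigl(\int\eps|\nabla v|^2\Bigr)^{1/2}\leq C\,D_\eps^{1/2}E^{1/2}.
\]
\item For the boundary term, use the near-well coercivity \eqref{eq:forced-well-coercivity}. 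Test the equation with \(v-\sigma\) on \(B_r\); this controls the gradient energy inside by the boundary flux \(\eps\int_{\partial B_r}|v-\sigma|\,|\partial_\nu v|\) plus \(\eps\int|F|\,|v-\sigma|\).
\item Estimate the flux using \(|v-\sigma|\lesssim\sqrt{V}\). This produces \(E^{3/2}\) (boundary energy times the oscillation bound) and \(\eps E\). The cross term \(\eps\int|F|\,|v-\sigma|\leq \eps D^{1/2}(\eps^{-1}\!\int V)^{1/2}\cdot\eps^{1/2}\) is absorbed into \(D^{1/2}E^{1/2}+\eps^2D\).
\end{enumerate}

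\textbf{Stage 2: dyadic iteration and clearing-out at the center.} Rescale \eqref{eq:forced-unit-decay} to \(B_{2^{-\ell}}(x_0)\) with \(\eps_\ell=2^\ell\eps\). The normalized energy \(E_\ell=r^{-1}E_\eps(v;B_r)\) is scale invariant for the one-dimensional normalization. The forcing scale also behaves well: \(D_{\eps_\ell}(F_\ell)\) at radius \(r\) equals \(r\) times \(D_\eps(F;B_r)\), so it only improves. This gives
\[
E_{\ell+1}\leq C\bigl(E_\ell^{3/2}+\eps_\ell E_\ell+\gamma^{1/2}2^{-\ell/2}E_\ell^{1/2}+\eps_\ell^2 2^{-\ell}\gamma\bigr).
\]
For \(\eta_{\rm f},\gamma_{\rm f}\) small, induction gives geometric decay \(E_\ell\leq \eta 2^{-\ell/2}\) while \(\eps_\ell\leq\sqrt{\eps}\). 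Stop at \(r\sim\sqrt\eps\). There the energy on \(B_{\sqrt\eps}(x_0)\) is \(o(\eps)\) in absolute terms. Use the gradient bound \(|\nabla v|\leq L/\eps\): if \(\dist(v(x_0),\Sigma)\geq\mu/2\), then \(V(v)\geq c_0\) on \(B_{c\eps}(x_0)\). That ball carries energy at least \(c\,c_0\eps\), a contradiction. Hence \(\dist(v(x_0),\Sigma)<\mu/2\).

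\textbf{Stage 3: uniform confinement and the strong inner estimate.}
\begin{enumerate}
\item Apply Stage 2, after rescaling, around each \(x_0\in B_{3/4}\); this is allowed since \(B_{1/4}(x_0)\subset B_1\) and the smallness assumptions are inherited.
\item The map \(x\mapsto\) nearest well is continuous into the discrete set \(\Sigma\) and \(B_{3/4}\) is connected. This yields \eqref{eq:forced-confinement} with a single \(\sigma\).
\item For \eqref{eq:forced-strong-inner}, test with \(\chi^2(v-\sigma)\), where \(\chi\in C_c^\infty(B_{3/4})\) and \(\chi=1\) on \(B_{5/8}\). Coercivity \(DV(v)\cdot(v-\sigma)\geq c_V|v-\sigma|^2\) gives
\[
\eps\!\int\chi^2|\nabla v|^2+\frac{c_V}{\eps}\!\int\chi^2|v-\sigma|^2
\leq C\eps\!\int|\nabla\chi|^2|v-\sigma|^2+\int\chi^2|F|\,|v-\sigma|.
\]
\item Absorb the \(F\) term by Young as \(\frac{c_V}{2\eps}\int\chi^2|v-\sigma|^2+C\eps\int|F|^2\); the last piece equals \(\eps^2D\).
\item The cutoff term is \(\leq C\eps\int_{B_{3/4}}|v-\sigma|^2\leq C\eps^2E\), using \(V\sim|v-\sigma|^2\) there.
\item Multiplying appropriately gives \(E_\eps(v;B_{5/8})\leq C\eps^2(E+D)\).
\end{enumerate}

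\textbf{Main obstacle.} The delicate point is Stage 1, step 3: showing that the good circle's trace lies in one well when only energy smallness is known, since wells may be connected by low-cost paths. I would use the distance function \(\phi_a=d_V(\cdot,\sigma_a)\) and the fact that \(d_V(\sigma_a,\sigma_b)\) is bounded below. The energy \(\leq 4E\) on the circle then bounds the total variation of each \(\phi_a(v)\) along the circle, so the trace cannot leave a small \(d_V\)-neighborhood of one well.
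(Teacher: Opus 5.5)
\textbf{Verdict: there is a genuine gap in Stage~1, and everything else rests on it.} Your Stage~3 is the paper's argument, and Stage~2 has the right architecture, but Stage~1 does not establish \eqref{eq:forced-unit-decay}.

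\emph{Testing with \(v-\sigma\) on all of \(B_r\) is not available.} Knowing that the trace on the good circle \(\partial B_r\) lies within \(C\sqrt E\) of \(\sigma\) says nothing about the interior. Before confinement, \(B_r\) may contain regions where \(v\) is near another well or inside a transition layer. There \(DV(v)\cdot(v-\sigma)\geq c_V|v-\sigma|^2\) fails, so testing the equation with \(v-\sigma\) on all of \(B_r\) gives no control. Doing so presupposes the confinement the theorem is meant to prove. Indeed, if it were legitimate it would give \(E_\eps(v;B_r)\lesssim\eps E+\eps^2D\) outright, with no \(E^{3/2}\) term at all.

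\emph{Pohozaev on the outer circle gives no decay.} The Pohozaev boundary term lives on the circle you have already fixed, where it is only known to be \(\leq4E\). No interior estimate can shrink it, so Pohozaev on that circle gives merely \(\int_{B_r}V/\eps\lesssim E+D^{1/2}E^{1/2}\). The gain must come from a \emph{different, inner} circle whose own energy is already \(O(E^{3/2}+\eps E+\eps D^{1/2}E^{1/2}+\eps^2D)\).

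\textbf{The missing idea is how to produce that inner circle.} The paper does it in three steps.
\begin{enumerate}
\item Set \(\kappa=K\sqrt E\) and \(r_a=|v-\sigma_a|\). Test with \(v-\sigma_a\) only on \(\{r_a<\kappa\}\cap B_\rho\). This produces the level-set flux \(\eps\kappa\int_{\{r_a=\kappa\}}|\nabla r_a|\,\dd\cH^1\).
\item Compare this flux with the flux at a level \(s_*\in(\mu/4,\mu/2)\), using the forced subharmonicity \(\Delta r_a\geq-|F|/\eps\) on \(\{\kappa<r_a<s_*\}\). Coarea then bounds the flux at \(s_*\) by \(\int V/\eps\), since \(e_\eps\leq CV/\eps\) away from the wells thanks to \(\eps|\nabla v|\leq L\). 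The outer trace bound \(|v-\sigma|\leq\kappa/4\) on \(\partial B_\rho\) is what keeps these annuli off the boundary. The result is \eqref{eq:forced-refined-well}, and its term \(\kappa\int V/\eps\lesssim KE^{3/2}\) is the actual source of the \(3/2\) power.
\item Apply Lemma~\ref{lem:planar-shadow} to a regular level set \(\{\chi_\sigma(v)=s_0\}\), \(s_0\in(\kappa/2,3\kappa/4)\), of length \(\leq C/K^2\). It shows that most radii in \([5/8,\rho]\) give circles lying entirely in \(\{|v-\sigma|<\kappa\}\). Fubini selects one of small energy, Pohozaev is applied on it, and \eqref{eq:forced-full-by-potential} converts potential energy into full energy.
\end{enumerate}
You do invoke the planar level-set projection, but in step~3 to confine the outer trace. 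The one-dimensional Modica argument already does that; the projection's real role is to locate the inner circle. For the same reason, your ``main obstacle'' is misidentified: trace confinement is the easy step, and the inner circle is the hard one.

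\textbf{A secondary error in Stage~2.} Your induction hypothesis \(E_\ell\leq\eta2^{-\ell/2}\) only gives absolute energy \(\lesssim\eta\eps^{3/4}\) on \(B_{\sqrt\eps}(x_0)\). That is not \(\leq c\eps\), so the final contradiction does not close as written. Your recurrence, after Young on the cross term, actually gives \(E_{\ell+1}\leq\frac14E_\ell+C\gamma2^{-\ell}\). Hence the absolute energy at radius \(\sim\sqrt\eps\) is \(\leq8E_0\eps^{3/2}+C\gamma\eps\). Because of the forcing this is not \(o(\eps)\), but it is \(\leq\eta_{\rm w}\eps\) once \(\gamma\) is small. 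Finally, the iteration needs \(\eps\) small, so the case \(\eps\geq\eps_0\) must be handled directly by weak clearing-out. That case includes \(\tilde{\eps}=4\eps\leq4\) arising from the Stage~3 rescaling.
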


\begin{rem}[Correspondence with Bethuel]
\label{rem:forced-clearing-bethuel}

When \(F=0\), the confinement estimate
\eqref{eq:forced-confinement} is the analogue of
\cite[Theorem~1.11, (1.58)]{Bethuel2025}, while
\eqref{eq:forced-unit-decay} corresponds to
\cite[Proposition~1.12, (1.60)]{Bethuel2025}.

The estimate \eqref{eq:forced-strong-inner} is the forced analogue of
the near-well estimate in \cite[Proposition~6.8 and Theorem~1.11,
(1.59)]{Bethuel2025}.  The additional terms are generated only by the
forcing and are controlled by Cauchy--Schwarz and Young inequalities.
\end{rem}

\subsection{Good circles and a planar level-set lemma}

Choose a smooth non-decreasing function
\[
\psi:[0,\infty)\longrightarrow[0,3\mu/4]
\]
such that
\[
\psi(s)=s\quad\text{for }s\leq\frac{\mu}{2},
\qquad
0\leq\psi'\leq1,
\qquad
\psi(s)=\frac{3\mu}{4}\quad\text{for }s\geq\mu.
\]
For \(a=1,\ldots,q\), define
\[
\chi_a(y):=\psi(|y-\sigma_a|).
\]
Whenever \(\nabla(\chi_a\circ v)\neq0\), the point \(v\) lies in
\(B_\mu(\sigma_a)\), and therefore
\[
\chi_a(v)^2\leq C V(v),
\qquad
|\nabla(\chi_a\circ v)|\leq|\nabla v|.
\]
Consequently,
\begin{equation}
\label{eq:forced-modica-mortola}
\left|\nabla\bigl(\chi_a(v)^2\bigr)\right|
\leq C\sqrt{V(v)}\,|\nabla v|
\leq C e_\eps(v).
\end{equation}

\begin{rem}[Correspondence with Bethuel]

The construction of \(\chi_a\) and estimate
\eqref{eq:forced-modica-mortola} correspond to
\cite[Lemmas~2.4 and 2.5, especially (2.10)--(2.13)]{Bethuel2025}.
They are algebraic consequences of the structure of \(V\) near its wells
and are unaffected by the forcing.
\end{rem}

\begin{lem}[A good circle]
\label{lem:forced-circle}

Let \(L<\infty\).  There exist \(h_0=h_0(V,L)>0\) and
\(C=C(V,L)<\infty\) with the following property.

Assume \(0<\eps\leq4\), \(r\in[1/2,1]\), and
\[
\|v\|_{L^\infty(B_1)}\leq L .
\]
If
\[
\int_{\partial B_r}e_\eps(v)\,\dd\cH^1\leq h_0,
\]
then there exists \(\sigma\in\Sigma\) such that
\begin{equation}
\label{eq:forced-circle-bound}
\sup_{\partial B_r}|v-\sigma|
\leq
C\left(
\int_{\partial B_r}e_\eps(v)\,\dd\cH^1
\right)^{1/2}.
\end{equation}

Furthermore, for every \(1/2\leq r_0<r_1\leq1\), there exists
\(r\in[r_0,r_1]\) such that
\begin{equation}
\label{eq:forced-circle-average}
\int_{\partial B_r}e_\eps(v)\,\dd\cH^1
\leq
\frac1{r_1-r_0}
\int_{B_{r_1}\setminus B_{r_0}}e_\eps(v)\,\dd x .
\end{equation}
\end{lem}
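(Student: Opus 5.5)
The second assertion \eqref{eq:forced-circle-average} is a pigeonhole argument via the coarea formula. For \(r\in[1/2,1]\), write the annulus integral in polar coordinates:
\[
\int_{B_{r_1}\setminus B_{r_0}}e_\eps(v)\,\dd x=\int_{r_0}^{r_1}\left(\int_{\partial B_r}e_\eps(v)\,\dd\cH^1\right)\dd r .
\]
The inner function of \(r\) is continuous, because \(v\in C^1(\overline{B_1})\). Its average over \([r_0,r_1]\) is therefore attained or beaten at some radius, and that radius gives \eqref{eq:forced-circle-average}. No use of the equation or of \(F\) is needed.

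For \eqref{eq:forced-circle-bound} I would work on the closed curve \(\partial B_r\), whose length \(2\pi r\) lies in \([\pi,2\pi]\). Set \(h:=\int_{\partial B_r}e_\eps(v)\,\dd\cH^1\). The proof is a one-dimensional Modica--Mortola argument.

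\emph{Step 1 (a point near the wells).} Since \(\int_{\partial B_r}V(v)\,\dd\cH^1\leq\eps h\leq 4h\), there is a point \(x_0\in\partial B_r\) with
\[
V(v(x_0))\leq \frac{4h}{\pi}.
\]
Using \(|v|\leq L\) and \eqref{eq:forced-potential-away}, choose \(h_0\) so that \(4h_0/\pi<c_0(V,L,\mu)\). Then \(\dist(v(x_0),\Sigma)<\mu/4\), so \(v(x_0)\in B_{\mu/4}(\sigma)\) for some \(\sigma\in\Sigma\). The lower bound in \eqref{eq:forced-well-coercivity} then gives
\[
|v(x_0)-\sigma|\leq C\sqrt{h}.
\]

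\emph{Step 2 (the whole circle stays near \(\sigma\)).} Use the function \(\chi_\sigma(v)^2\) from \eqref{eq:forced-modica-mortola}, restricted to the circle. Its tangential derivative is bounded by \(C\sqrt{V(v)}\,|\partial_\theta v|\leq C e_\eps(v)\), by Young's inequality; this is the AM--GM step, and it is independent of \(\eps\). Integrating along the arc from \(x_0\) to any \(x\in\partial B_r\) gives
\[
\chi_\sigma(v(x))^2\leq \chi_\sigma(v(x_0))^2+Ch\leq Ch.
\]
After shrinking \(h_0\) so that \(Ch_0<(\mu/2)^2\), we get \(\chi_\sigma(v(x))<\mu/2\). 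Since \(\psi(s)\geq\min(s,\mu/2)\), this forces \(|v(x)-\sigma|<\mu/2\), and on that range \(\chi_\sigma(v)=|v-\sigma|\). Hence
\[
|v(x)-\sigma|^2\leq Ch
\]
for every \(x\in\partial B_r\), which is \eqref{eq:forced-circle-bound}. The point \(\sigma\) is unique because the balls \(B_\mu(\sigma_a)\) are disjoint.

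The only delicate point is the ordering of constants: the choice of \(h_0\) must make \(\chi_\sigma(v)\) stay below \(\mu/2\). This matters because \(\psi\) saturates at \(3\mu/4\), so the bound on \(\chi_\sigma(v)\) only controls \(|v-\sigma|\) below the saturation level. The constants depend on \(V\) and \(L\), through \(c_0(V,L,\mu)\), and not on \(\eps\).
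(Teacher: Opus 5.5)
Your proof is correct and follows the paper's argument essentially step for step: you select the radius by coarea averaging, find a point of small potential on the circle, apply coercivity near the well, and integrate the Modica--Mortola bound for \(\chi_\sigma(v)^2\) along an arc, shrinking \(h_0\) so that \(\chi_\sigma(v)\) stays below \(\mu/2\). Your remark that \(\psi(s)\geq\min(s,\mu/2)\) makes explicit the step the paper attributes to ``the definition of \(\psi\)''.
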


\begin{proof}

The second assertion follows directly from the coarea formula:
\[
\int_{r_0}^{r_1}
\left(
\int_{\partial B_s}e_\eps(v)\,\dd\cH^1
\right)\dd s
=
\int_{B_{r_1}\setminus B_{r_0}}e_\eps(v)\,\dd x .
\]

Set
\[
h:=\int_{\partial B_r}e_\eps(v)\,\dd\cH^1 .
\]
Since \(r\geq1/2\) and \(\eps\leq4\),
\[
\int_{\partial B_r}V(v)\,\dd\cH^1
\leq \eps h
\leq4h.
\]
Because \(\cH^1(\partial B_r)=2\pi r\geq\pi\), there exists
\(x_0\in\partial B_r\) such that
\[
V(v(x_0))\leq\frac{4}{\pi}h.
\]
For \(h_0\) sufficiently small, \eqref{eq:forced-potential-away}
implies that \(v(x_0)\in B_{\mu/2}(\sigma)\) for some \(\sigma\in\Sigma\).
Using \eqref{eq:forced-well-coercivity},
\[
|v(x_0)-\sigma|^2
\leq C V(v(x_0))
\leq Ch .
\]
After decreasing \(h_0\), we may assume
\(|v(x_0)-\sigma|<\mu/2\).  Hence
\[
\chi_\sigma(v(x_0))=|v(x_0)-\sigma|.
\]

For any \(x\in\partial B_r\), choose one of the two arcs of
\(\partial B_r\) joining \(x_0\) to \(x\).  By \eqref{eq:forced-modica-mortola},
\[
\begin{aligned}
\chi_\sigma(v(x))^2
&\leq
\chi_\sigma(v(x_0))^2
+
\int_{\partial B_r}
\left|\nabla(\chi_\sigma(v)^2)\right|\,\dd\cH^1\\
&\leq Ch+Ch
\leq Ch .
\end{aligned}
\]
Thus, after decreasing \(h_0\) once more,
\[
\chi_\sigma(v(x))<\frac{\mu}{2}
\qquad\text{for all }x\in\partial B_r .
\]
By the definition of \(\psi\),
\[
\chi_\sigma(v(x))=|v(x)-\sigma|
\qquad\text{on }\partial B_r,
\]
and \eqref{eq:forced-circle-bound} follows.
\end{proof}

\begin{rem}[Correspondence with Bethuel]

Lemma~\ref{lem:forced-circle} combines the one-dimensional estimate
in \cite[Lemma~2.6]{Bethuel2025} with the good-radius selection in
\cite[Lemma~2.7]{Bethuel2025}.  Neither part uses the elliptic equation,
so the argument is unchanged by the forcing.
\end{rem}

\begin{lem}[Planar shadow estimate]
\label{lem:planar-shadow}

Let \(w\in C(\overline{B_\rho})\) be \(C^1\) in a neighborhood of
\(\{w\geq s\}\).  Assume
\[
s<h,\qquad
w<s\quad\text{on }\partial B_\rho,
\]
and assume that \(s\) is a regular value of \(w\).

For \(0<a<\rho\), define
\[
\mathcal R
:=
\left\{
r\in[a,\rho]:
\partial B_r\cap\{w>h\}\neq\varnothing
\right\}.
\]
Then
\begin{equation}
\label{eq:planar-shadow}
\mathcal L^1(\mathcal R)
\leq
\frac12
\cH^1\bigl(\{w=s\}\cap B_\rho\bigr).
\end{equation}
\end{lem}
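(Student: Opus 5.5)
The plan is to show that every circle \(\partial B_r\), \(r\in\mathcal R\), meets \emph{two} distinct points of the level set \(\Gamma:=\{w=s\}\cap B_\rho\), and that each point of \(\Gamma\) sits on exactly one circle. The bound on \(\mathcal L^1(\mathcal R)\) will then come from the coarea formula applied to the radial function \(x\mapsto|x|\) restricted to \(\Gamma\).

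First, since \(s\) is a regular value and \(w\) is \(C^1\) near \(\{w\geq s\}\), the set \(\Gamma\) is a compact embedded \(C^1\) curve. Compactness inside the open disk holds because \(w<s\) on \(\partial B_\rho\), so \(\Gamma\) stays away from \(\partial B_\rho\). Hence \(\Gamma\) is a finite union of closed \(C^1\) loops, and \(\cH^1(\Gamma)<\infty\); if it were infinite there would be nothing to prove. Each component \(U\) of \(\{w>s\}\) is an open set with \(\overline U\subset B_\rho\) and \(\partial U\subset\Gamma\).

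Next, fix \(r\in\mathcal R\), so there is \(x\in\partial B_r\) with \(w(x)>h>s\). Let \(U\) be the component of \(\{w>s\}\) containing \(x\), and let \(I\) be the connected component of \(U\cap\partial B_r\) containing \(x\). The arc \(I\) cannot be the whole circle \(\partial B_r\). Indeed, if \(\partial B_r\subset U\), then \(U\) would contain a loop surrounding the origin. Since \(w\) is continuous with \(w<s\) on \(\partial B_\rho\), the outer boundary of that loop's region must still be crossed by \(\Gamma\) on each ray. I will handle this case directly with rays instead: on the ray through any point of \(\partial B_r\), \(w\) goes from \(>s\) at radius \(r\) to \(<s\) at radius \(\rho\). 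This only gives one crossing per ray, so it is not the argument I want. The cleaner route is as follows. The function \(g(\theta):=w(re^{i\theta})-s\) is positive at \(\theta_x\). Either \(g\) has at least two zeros on the circle, or \(g>0\) on the whole circle. In the first case, the two zeros are two distinct points of \(\Gamma\cap\partial B_r\), since a continuous function on a circle that is positive somewhere and vanishes somewhere has at least two sign changes or zeros bounding the positive arc.

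The hard step is the second case, \(\partial B_r\subset\{w>s\}\), which I expect to be the main obstacle. In this case I will use the annulus \(A=\{r<|x|<\rho\}\): \(w>s\) on the inner circle and \(w<s\) on the outer one. Then \(\Gamma\cap A\) must separate the two boundary circles, so it contains a loop \(\gamma\) winding around the origin. The radial projection of \(\gamma\) covers the interval \([r,\max_\gamma|x|]\) at least twice, but it need not cover \(r\) itself. So I will instead work with \(r'\) slightly larger, which is also in \(\mathcal R\) by openness of \(\{w>h\}\), and argue measure-theoretically. Covering this case needs more care; a possible fallback is to show that such \(r\) form a null set, or to accept it via the winding loop covering every radius in a subinterval twice.

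Given the double-intersection property for a.e. \(r\in\mathcal R\), I will conclude with the coarea formula on the rectifiable set \(\Gamma\) with the \(1\)-Lipschitz map \(f(x)=|x|\):
\[
\int_0^\rho \#\bigl(\Gamma\cap\partial B_r\bigr)\,\dd r\le\int_\Gamma |\nabla^\Gamma f|\,\dd\cH^1\le \cH^1(\Gamma).
\]
Since the integrand is at least \(2\) on \(\mathcal R\), this yields \(2\mathcal L^1(\mathcal R)\le\cH^1(\Gamma)\), which is \eqref{eq:planar-shadow}.
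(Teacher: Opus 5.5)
\textbf{There is a genuine gap, and it is exactly the case you flagged.} The case $\partial B_r\subset\{w>s\}$ cannot be handled by counting points of $\Gamma$ on circles, and neither of your fallbacks works.

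Take $w(x)=\phi(|x|)$ radial, with $\phi$ constant near $0$. Let $\phi>h$ on an interval $(r_1,r_2)\subset(a,\rho)$, and let $\phi-s$ change sign transversally only at two radii $0<r_1'<r_1$ and $r_2<r_2'<\rho$. Then:
\begin{enumerate}
\item $s$ is a regular value and $\Gamma=\partial B_{r_1'}\cup\partial B_{r_2'}$;
\item $\mathcal R\supset(r_1,r_2)$;
\item $\Gamma\cap\partial B_r=\varnothing$ for every $r\in(r_1',r_2')$.
\end{enumerate}
So the radii in your second case can have positive measure, and they are not a null set. The loop winding around the origin can be an exact circle, here $\partial B_{r_2'}$, whose radial projection is a single point. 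In fact $\int_\Gamma|\nabla^\Gamma|x||\,\dd\cH^1=0$ in this example, while $\mathcal L^1(\mathcal R)>0$. Any argument that only uses the radial component of the length of $\Gamma$ must therefore fail; the angular part of the length has to be used.

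A smaller defect sits in your first case. A circle may touch $\Gamma$ tangentially at a single point, with $w>s$ elsewhere on it, so ``two distinct zeros'' is false as stated. This only happens at critical values of $|x|$ restricted to $\Gamma$, which form a null set, so it is harmless.

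\textbf{The missing idea is to count componentwise and to project onto lines rather than circles.}
\begin{enumerate}
\item Fix a component $G$ of $\{w>s\}$ that meets $\{w>h\}$. The radii of circles meeting $G$ form the interval $\{|x|:x\in G\}$, by connectedness. Its length is at most $\operatorname{diam}G$, because $\bigl||x|-|y|\bigr|\leq|x-y|$. These intervals cover $\mathcal R$.
\item Prove $2\operatorname{diam}G\leq\cH^1(\partial G)$ with your own coarea and crossing-count argument, but applied to the linear function $x\mapsto x\cdot e$, where $e$ is the direction of a diameter of $G$. A line, unlike a circle about the origin, cannot be swallowed by the bounded set $G$. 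So every line $\{x\cdot e=t\}$, with $t$ strictly between the extreme values of $x\cdot e$ on $\overline G$, meets $G$ in a bounded nonempty open set. The two endpoints of any of its intervals are two distinct points of $\partial G$.
\item Sum over components. The curves $\partial G\subset\Gamma$ are pairwise disjoint, since at a regular point of $\Gamma$ one has $w>s$ on only one side. This gives \eqref{eq:planar-shadow}.
\end{enumerate}
This is the route taken in the paper.
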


\begin{proof}

Let \((G_\ell)_\ell\) be the connected components of
\(\{w>s\}\) which intersect \(\{w>h\}\).  Since \(w<s\) on
\(\partial B_\rho\), every \(G_\ell\) is relatively compact in
\(B_\rho\).  Its radial projection
\[
I_\ell:=\{|x|:x\in G_\ell\}
\]
is an interval, because \(G_\ell\) is connected.  Consequently,
\[
\mathcal L^1(I_\ell)
\leq
diam(G_\ell).
\]
Every \(r\in\mathcal R\) belongs to at least one \(I_\ell\), and hence
\[
\mathcal L^1(\mathcal R)
\leq
\sum_\ell diam(G_\ell).
\]

Because \(s\) is a regular value, \(\{w=s\}\) is a compact
one-dimensional \(C^1\) submanifold of \(B_\rho\).  In particular,
\(\partial G_\ell\) is a finite union of \(C^1\) closed curves and
\[
\partial G_\ell\subset\{w=s\}.
\]

We use the following elementary planar fact:
\begin{equation}
\label{eq:planar-boundary-diameter}
2 diam(G_\ell)
\leq
\cH^1(\partial G_\ell).
\end{equation}
Indeed, let \(D_\ell=diam(G_\ell)\).  Choose
\(p_\ell,q_\ell\in\overline{G_\ell}\) such that
\[
|p_\ell-q_\ell|=D_\ell,
\]
and put
\[
e_\ell:=\frac{p_\ell-q_\ell}{|p_\ell-q_\ell|}.
\]
Let
\[
\alpha_\ell:=\min_{\overline{G_\ell}}x\cdot e_\ell,
\qquad
\beta_\ell:=\max_{\overline{G_\ell}}x\cdot e_\ell .
\]
Then
\[
\beta_\ell-\alpha_\ell\geq D_\ell.
\]
For almost every \(t\in(\alpha_\ell,\beta_\ell)\), the line
\[
\{x:x\cdot e_\ell=t\}
\]
intersects \(G_\ell\) in a bounded open interval containing at least
one point.  Its two endpoints belong to \(\partial G_\ell\).  Therefore
\[
\cH^0\bigl(
\partial G_\ell\cap\{x:x\cdot e_\ell=t\}
\bigr)\geq2
\]
for almost every \(t\in(\alpha_\ell,\beta_\ell)\).  Applying the
one-dimensional coarea formula to the \(C^1\) curves
\(\partial G_\ell\), we obtain
\[
\begin{aligned}
\cH^1(\partial G_\ell)
&\geq
\int_{\partial G_\ell}
\left|\nabla_{\partial G_\ell}(x\cdot e_\ell)\right|
\,\dd\cH^1\\
&=
\int_\R
\cH^0\bigl(
\partial G_\ell\cap\{x:x\cdot e_\ell=t\}
\bigr)\,\dd t\\
&\geq
2(\beta_\ell-\alpha_\ell)
\geq2D_\ell .
\end{aligned}
\]
This proves \eqref{eq:planar-boundary-diameter}.

Distinct components \(G_\ell\) have disjoint boundary curves up to
\(\cH^1\)-null sets.  Hence
\[
\sum_\ell\cH^1(\partial G_\ell)
\leq
\cH^1\bigl(\{w=s\}\cap B_\rho\bigr).
\]
Combining the preceding inequalities gives
\[
2\mathcal L^1(\mathcal R)
\leq
2\sum_\ell diam(G_\ell)
\leq
\sum_\ell\cH^1(\partial G_\ell)
\leq
\cH^1\bigl(\{w=s\}\cap B_\rho\bigr),
\]
which is \eqref{eq:planar-shadow}.
\end{proof}

\begin{rem}[Correspondence with Bethuel]

Lemma~\ref{lem:planar-shadow} is the geometric ingredient behind
\cite[Proposition~2.9]{Bethuel2025}.  Bethuel formulates the conclusion
as a lower bound for the set of good radii, whereas
\eqref{eq:planar-shadow} estimates the complementary set of bad radii.
The two formulations are equivalent.  This lemma is purely planar and
does not use the equation.
\end{rem}

\subsection{Energy estimates near the wells}

For \(0<\rho\leq4/5\), define
\[
\Xi_\rho
:=
\left\{
x\in B_\rho:
\dist(v(x),\Sigma)\geq\frac{\mu}{4}
\right\}.
\]
By \eqref{eq:forced-uniform-bound},
\[
\frac{\eps}{2}|\nabla v|^2
\leq
\frac{L^2}{2\eps}.
\]
Together with \eqref{eq:forced-potential-away}, this implies
\begin{equation}
\label{eq:forced-far-energy}
e_\eps(v)
\leq
\frac{C(V,L)}{\eps}V(v)
\qquad\text{on }\Xi_\rho .
\end{equation}

For \(0<s\leq\mu/2\), set
\[
\Omega_a(s,\rho)
:=
\left\{
x\in B_\rho:
|v(x)-\sigma_a|<s
\right\},
\]
and
\[
\Upsilon(s,\rho)
:=
\bigcup_{a=1}^q\Omega_a(s,\rho).
\]
The sets \(\Omega_a(s,\rho)\) are pairwise disjoint.

\begin{lem}[Energy near the wells]
\label{lem:forced-target}

Let
\[
\rho\in\left[\frac{9}{16},\frac45\right].
\]
Then
\begin{equation}
\label{eq:forced-fixed-well}
\begin{aligned}
E_\eps\left(
v;\Upsilon\left(\frac{\mu}{4},\rho\right)
\right)
\leq C\bigg\{&
\int_{B_\rho}\frac{V(v)}{\eps}\,\dd x
+\eps\int_{\partial B_\rho}e_\eps(v)\,\dd\cH^1\\
&+
\eps^2\int_{B_\rho}\frac{|F|^2}{\eps}\,\dd x
\bigg\}.
\end{aligned}
\end{equation}
Consequently,
\begin{equation}
\label{eq:forced-full-by-potential}
\begin{aligned}
E_\eps(v;B_\rho)
\leq C\bigg\{&
\int_{B_\rho}\frac{V(v)}{\eps}\,\dd x
+\eps\int_{\partial B_\rho}e_\eps(v)\,\dd\cH^1\\
&+
\eps^2\int_{B_\rho}\frac{|F|^2}{\eps}\,\dd x
\bigg\}.
\end{aligned}
\end{equation}

Assume in addition that
\[
0<\kappa<\frac{\mu}{4}
\]
and that, for some \(\sigma_1\in\Sigma\),
\begin{equation}
\label{eq:forced-boundary-well}
|v-\sigma_1|\leq\frac{\kappa}{4}
\qquad\text{on }\partial B_\rho .
\end{equation}
Then
\begin{equation}
\label{eq:forced-refined-well}
\begin{aligned}
E_\eps(v;\Upsilon(\kappa,\rho))
\leq C\bigg\{&
\kappa\int_{B_\rho}\frac{V(v)}{\eps}\,\dd x
+\eps\int_{\partial B_\rho}e_\eps(v)\,\dd\cH^1\\
&+
\eps
\left(
\int_{B_\rho}\frac{|F|^2}{\eps}\,\dd x
\right)^{1/2}
\left(
\int_{B_\rho}\frac{V(v)}{\eps}\,\dd x
\right)^{1/2}\\
&+
\eps^2\int_{B_\rho}\frac{|F|^2}{\eps}\,\dd x
\bigg\}.
\end{aligned}
\end{equation}
\end{lem}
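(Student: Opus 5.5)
The plan is to test the equation \eqref{eq:forced-elliptic} against a cutoff of the displacement from the nearest well, localized in the target. For each \(a\), set \(w_a:=\phi(|v-\sigma_a|)\,(v-\sigma_a)\), where \(\phi:[0,\infty)\to[0,1]\) is smooth, non-increasing, equals \(1\) on \([0,\mu/4]\) and vanishes on \([\mu/2,\infty)\). By \eqref{eq:forced-well-coercivity} we have \(DV(v)\cdot w_a\geq c\,|v-\sigma_a|^2\phi\geq0\). Moreover \(\nabla v\cdot\nabla w_a\geq \phi|\nabla v|^2-C|\nabla v|^2\mathbf 1_{\{\mu/4\le|v-\sigma_a|\le\mu/2\}}\), and the bad set lies in \(\Xi_\rho\). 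On \(\Xi_\rho\), \eqref{eq:forced-far-energy} bounds \(\eps|\nabla v|^2\) by \(C V(v)/\eps\).

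Multiplying by \(w_a\) and integrating over \(B_\rho\) gives
\[
\int_{B_\rho}\eps\nabla v\cdot\nabla w_a+\frac1\eps DV(v)\cdot w_a
=\eps\int_{\partial B_\rho}\partial_rv\cdot w_a\,\dd\cH^1+\int_{B_\rho}F\cdot w_a .
\]
The left side controls \(\int_{\Omega_a(\mu/4,\rho)}\eps|\nabla v|^2\), up to an error \(C\int_{B_\rho}V(v)/\eps\) coming from \(\Xi_\rho\). The boundary term is at most \(C\eps\int_{\partial B_\rho}(\eps|\nabla v|^2+|v-\sigma_a|^2\phi)\), since \(|w_a|\le C\) and \(\eps|\partial_rv|\,|w_a|\le \eps^2|\nabla v|^2+|w_a|^2\). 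Using \(|w_a|^2\le CV(v)\), this is bounded by \(C\eps\int_{\partial B_\rho}e_\eps(v)\). For the forcing term, Young's inequality gives \(|F\cdot w_a|\le \frac{\eps}{2}|F|^2/\eps\cdot\eps\cdot C+\frac{c}{2\eps}|w_a|^2\), and the second piece is absorbed into the coercive term \(\frac{c}{\eps}\int|v-\sigma_a|^2\phi\) (or simply bounded by \(C\int V/\eps\)). This leaves \(C\eps^2\int|F|^2/\eps\).

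Summing over \(a\) and adding \(\int V/\eps\) for the potential part yields \eqref{eq:forced-fixed-well}. Then \eqref{eq:forced-full-by-potential} follows because \(B_\rho\subset\Upsilon(\mu/4,\rho)\cup\Xi_\rho\), and \(\Xi_\rho\) is handled by \eqref{eq:forced-far-energy}.

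For the refined estimate \eqref{eq:forced-refined-well}, I would use a cutoff at level \(\kappa\) and only around \(\sigma_1\) first. Take \(\phi_\kappa\) equal to \(1\) on \([0,\kappa]\), zero beyond \(2\kappa\), with \(|\phi_\kappa'|\le C/\kappa\), and \(w=\phi_\kappa(|v-\sigma_1|)(v-\sigma_1)\). The transition region \(\{\kappa\le|v-\sigma_1|\le2\kappa\}\) now has gradient error \(\eps|\nabla v|^2\) that is not controlled by \(\Xi_\rho\).

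\textbf{Main obstacle.} Handling this transition region is the difficult step. The remedy I would try is Bethuel's device of testing instead with the truncated scalar \(g=(|v-\sigma_a|^2-\kappa^2)^-\)-type quantity. An alternative is to integrate over levels \(s\in(\kappa,2\kappa)\) and use the coarea formula with the gradient bound \(|\nabla|v-\sigma||\le|\nabla v|\), which produces the factor \(\kappa\) in front of \(\int V/\eps\); the gain of \(\kappa\) reflects that \(|v-\sigma_a|\le 2\kappa\) on the region, so \(|w|\le C\kappa\).

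The boundary condition \eqref{eq:forced-boundary-well} has two uses. It makes boundary terms vanish for wells \(a\ne1\), since \(w_a=0\) on \(\partial B_\rho\). For \(a=1\), it gives \(|w|\le\kappa/4\) there, bounded by the boundary energy term. For the forcing, Cauchy--Schwarz is used rather than Young: \(\int|F||w|\le(\int|F|^2/\eps)^{1/2}(\eps\int|w|^2)^{1/2}\), and \(\eps\int|w|^2\le C\eps^2\int V/\eps\). This gives exactly the mixed term \(\eps(\int|F|^2/\eps)^{1/2}(\int V/\eps)^{1/2}\), plus \(\eps^2\int|F|^2/\eps\) from absorbing.
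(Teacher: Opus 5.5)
Your argument for \eqref{eq:forced-fixed-well} and \eqref{eq:forced-full-by-potential} is correct, and it differs from the paper's. You test with the smooth target cutoff $\phi(r_a)(v-\sigma_a)$, $r_a=|v-\sigma_a|$, on all of $B_\rho$. The only cutoff error, $\eps\phi'(r_a)r_a|\nabla r_a|^2$, then lives on $\{\mu/4\le r_a\le\mu/2\}\subset\Xi_\rho$ and is paid for by \eqref{eq:forced-far-energy}. The paper instead tests with $v-\sigma_a$ on a sublevel set $\{r_a<s_*\}$, where $s_*\in(\mu/4,\mu/2)$ is a regular value chosen by coarea so that $\Phi_a(s_*)\le C\int_{B_\rho}V(v)/\eps$, with $\Phi_a(s):=\eps\int_{\{r_a=s\}\cap B_\rho}|\nabla r_a|\,\dd\cH^1$. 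Your route avoids regular values. The paper's route produces a flux bound at a fixed level, and that bound is what the refined estimate needs.

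The refined estimate \eqref{eq:forced-refined-well} is not proved, and neither remedy you suggest produces the factor $\kappa$.
\begin{enumerate}
\item With $w=\phi_\kappa(r_1)(v-\sigma_1)$, the cutoff error $\eps\int|\phi_\kappa'(r_1)|\,r_1|\nabla r_1|^2$ on $\{\kappa<r_1<2\kappa\}$ has coefficient $|\phi_\kappa'|r_1=O(1)$. That region is not in $\Xi_\rho$, and comparing with $V\ge c\kappa^2$ there loses $\kappa^{-2}$.
\item Coarea only rewrites the error as $\int_\kappa^{2\kappa}|\phi_\kappa'(s)|\,s\,\Phi_1(s)\,\dd s\le2\kappa\sup_{(\kappa,2\kappa)}\Phi_1$. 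The only a priori information at small levels is $\int_\kappa^{2\kappa}\Phi_1\,\dd s\le\eps\int_{\{\kappa<r_1<2\kappa\}}|\nabla v|^2\lesssim\int_{B_\rho}V/\eps+\cdots$. This gives $\kappa\,\Phi_1(s)\lesssim\int_{B_\rho}V/\eps+\cdots$ at some level, which is \eqref{eq:forced-fixed-well} again with no factor $\kappa$.
\item Testing $\Delta(r_1^2/2)$ against a truncation like $(\kappa^2-r_1^2)^+$ is circular in the same way: it bounds $\int(\kappa^2-r_1^2)^+|\nabla v|^2$ by $2\int_{\{r_1<\kappa\}}r_1^2|\nabla r_1|^2$.
\item The bound $|w|\le C\kappa$ helps only the boundary and forcing terms, not this gradient error.
\end{enumerate}

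The missing ingredient is a comparison of level-set fluxes across scales, which carries the bound on $\Phi_a(s_*)$ down to level $\kappa$. The paper gets it from forced subharmonicity of $r_a$: where $0<r_a\le\mu$,
\[
\Delta r_a=\frac{|\nabla v|^2-|\nabla r_a|^2}{r_a}+\frac{DV(v)\cdot(v-\sigma_a)}{\eps^2r_a}-\frac{F\cdot(v-\sigma_a)}{\eps r_a}\geq-\frac{|F|}{\eps}.
\]
Integrating over $\mathcal A_a=\{\kappa<r_a<s_*\}\cap B_\rho$ gives $\Phi_a(\kappa)\le\Phi_a(s_*)+\int_{\mathcal A_a}|F|$.

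This is the essential role of \eqref{eq:forced-boundary-well}, which you use only to dispose of boundary terms. It guarantees $\mathcal A_a\cap\partial B_\rho=\varnothing$, so the divergence theorem yields only the two level-set fluxes and no term on $\partial B_\rho$.

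Multiplying by $\kappa$ controls the interface term $\eps\kappa\int_{\{r_a=\kappa\}}|\nabla r_a|$ in the sublevel identity at $s=\kappa$. The term $\kappa\int_{\mathcal A_a}|F|$ is then handled by Cauchy--Schwarz together with $|\mathcal A_a|\le C\eps\kappa^{-2}\int_{B_\rho}V/\eps$, which holds since $V\ge c\kappa^2$ on $\mathcal A_a$. This is a second source of the mixed term, besides the bulk forcing.

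The same comparison would also close your cutoff version, with the transition of $\phi_\kappa$ placed in $(\kappa,s_*)$. Without it, the $\kappa$-gain is not available.
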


\begin{proof}

Write
\[
r_a(x):=|v(x)-\sigma_a|.
\]
We first establish the integration-by-parts identity for regular levels.

Let \(s\in(0,\mu/2)\) be a regular value of \(r_a\), and define
\[
\Gamma_a(s,\rho)
:=
\partial\Omega_a(s,\rho)\cap B_\rho,
\qquad
\Pi_a(s,\rho)
:=
\partial\Omega_a(s,\rho)\cap\partial B_\rho .
\]
On \(\Gamma_a(s,\rho)\), the outward unit normal to
\(\Omega_a(s,\rho)\) satisfies
\[
\partial_\nu r_a=|\nabla r_a|.
\]
Testing the equation by \(v-\sigma_a\) on \(\Omega_a(s,\rho)\) gives
\begin{align}
&\int_{\Omega_a(s,\rho)}
\left\{
\eps|\nabla v|^2
+\frac1\eps DV(v)\cdot(v-\sigma_a)
\right\}\dd x
\label{eq:forced-target-identity}\\
&\quad=
\eps s
\int_{\Gamma_a(s,\rho)}
|\nabla r_a|\,\dd\cH^1
+
\eps
\int_{\Pi_a(s,\rho)}
\partial_\nu v\cdot(v-\sigma_a)\,\dd\cH^1
\nonumber\\
&\qquad
+
\int_{\Omega_a(s,\rho)}
F\cdot(v-\sigma_a)\,\dd x .
\nonumber
\end{align}
Indeed, integration by parts gives
\[
-\eps\int_{\Omega_a}\Delta v\cdot(v-\sigma_a)
=
\eps\int_{\Omega_a}|\nabla v|^2
-
\eps\int_{\partial\Omega_a}
\partial_\nu v\cdot(v-\sigma_a),
\]
and on \(\Gamma_a(s,\rho)\),
\[
\partial_\nu v\cdot(v-\sigma_a)
=
\partial_\nu r_a\,r_a
=
s|\nabla r_a|.
\]

By \eqref{eq:forced-well-coercivity},
\[
DV(v)\cdot(v-\sigma_a)\geq c_V|v-\sigma_a|^2,
\qquad
V(v)\leq C_V|v-\sigma_a|^2
\]
on \(\Omega_a(s,\rho)\). Therefore there is \(c_*>0\) such that
\begin{equation}
\label{eq:forced-target-coercive}
E_\eps(v;\Omega_a(s,\rho))
\leq
C
\int_{\Omega_a(s,\rho)}
\left\{
\eps|\nabla v|^2
+\frac1\eps DV(v)\cdot(v-\sigma_a)
\right\}\dd x .
\end{equation}

The forcing term satisfies
\begin{align}
\left|
\int_{\Omega_a(s,\rho)}
F\cdot(v-\sigma_a)\,\dd x
\right|
&\leq
\left(
\int_{\Omega_a(s,\rho)}
\frac{|F|^2}{\eps}\,\dd x
\right)^{1/2}
\left(
\int_{\Omega_a(s,\rho)}
\eps|v-\sigma_a|^2\,\dd x
\right)^{1/2}
\nonumber\\
&\leq
C\eps
\left(
\int_{\Omega_a(s,\rho)}
\frac{|F|^2}{\eps}\,\dd x
\right)^{1/2}
\left(
\int_{\Omega_a(s,\rho)}
\frac{V(v)}{\eps}\,\dd x
\right)^{1/2}.
\label{eq:forced-target-force}
\end{align}
Consequently, for every \(\delta>0\),
\begin{equation}
\label{eq:forced-target-force-young}
\left|
\int_{\Omega_a(s,\rho)}
F\cdot(v-\sigma_a)\,\dd x
\right|
\leq
\delta
\int_{\Omega_a(s,\rho)}
\frac{V(v)}{\eps}\,\dd x
+
C_\delta\eps^2
\int_{\Omega_a(s,\rho)}
\frac{|F|^2}{\eps}\,\dd x .
\end{equation}

Define the transition shell
\[
\mathcal S_\rho
:=
\bigcup_{a=1}^q
\left\{
\frac{\mu}{4}<r_a<\frac{\mu}{2}
\right\}\cap B_\rho .
\]
By the separation of the wells,
\[
\mathcal S_\rho\subset\Xi_\rho.
\]
Thus, by \eqref{eq:forced-far-energy},
\[
\int_{\mathcal S_\rho}e_\eps(v)\,\dd x
\leq
C\int_{B_\rho}\frac{V(v)}{\eps}\,\dd x .
\]

Since
\[
|\nabla r_a|\leq|\nabla v|,
\qquad
\eps|\nabla v|^2\leq2e_\eps(v),
\]
the coarea formula implies
\begin{align*}
&\int_{\mu/4}^{\mu/2}
\left[
\eps\sum_{a=1}^q
\int_{\{r_a=s\}\cap B_\rho}
|\nabla r_a|\,\dd\cH^1
\right]\dd s\\
&\qquad=
\eps\sum_{a=1}^q
\int_{\{\mu/4<r_a<\mu/2\}\cap B_\rho}
|\nabla r_a|^2\,\dd x\\
&\qquad\leq
C\int_{\mathcal S_\rho}e_\eps(v)\,\dd x\\
&\qquad\leq
C\int_{B_\rho}\frac{V(v)}{\eps}\,\dd x .
\end{align*}
The set of common regular values of the finitely many functions
\(r_a\) has full measure.  Hence we can choose
\[
s_*\in\left(\frac{\mu}{4},\frac{\mu}{2}\right)
\]
which is regular for every \(r_a\) and satisfies
\begin{equation}
\label{eq:forced-outer-level-flux}
\eps\sum_{a=1}^q
\int_{\{r_a=s_*\}\cap B_\rho}
|\nabla r_a|\,\dd\cH^1
\leq
C\int_{B_\rho}\frac{V(v)}{\eps}\,\dd x .
\end{equation}

On \(\Pi_a(s_*,\rho)\), one has \(r_a=s_*\leq\mu/2\), and therefore
\[
|v-\sigma_a|\leq C\sqrt{V(v)}.
\]
Thus
\begin{align}
\eps
|\partial_\nu v\cdot(v-\sigma_a)|
&\leq
C\eps|\nabla v|\sqrt{V(v)}
\nonumber\\
&\leq
C\bigl(\eps^2|\nabla v|^2+V(v)\bigr)
=
C\eps e_\eps(v).
\label{eq:forced-boundary-term}
\end{align}

Choose \(\delta>0\) in \eqref{eq:forced-target-force-young}
sufficiently small so that the term
\[
\delta\int\frac{V(v)}{\eps}
\]
can be absorbed into the coercive left-hand side of
\eqref{eq:forced-target-identity}.  Inserting
\eqref{eq:forced-outer-level-flux},
\eqref{eq:forced-boundary-term}, and
\eqref{eq:forced-target-force-young} into
\eqref{eq:forced-target-identity}, and summing over \(a\), gives
\[
\begin{aligned}
E_\eps(v;\Upsilon(s_*,\rho))
\leq C\bigg\{&
\int_{B_\rho}\frac{V(v)}{\eps}\,\dd x
+\eps\int_{\partial B_\rho}e_\eps(v)\,\dd\cH^1\\
&+
\eps^2\int_{B_\rho}\frac{|F|^2}{\eps}\,\dd x
\bigg\}.
\end{aligned}
\]
Since
\[
\Upsilon\left(\frac{\mu}{4},\rho\right)
\subset
\Upsilon(s_*,\rho),
\]
this proves \eqref{eq:forced-fixed-well}.

Moreover,
\[
B_\rho\setminus\Upsilon\left(\frac{\mu}{4},\rho\right)
\subset\Xi_\rho.
\]
Using \eqref{eq:forced-far-energy} on this complement proves
\eqref{eq:forced-full-by-potential}.

It remains to prove the refined estimate.  We first assume that
\(\kappa\) is a common regular value of all functions \(r_a\).  Define
\[
\mathcal A_a
:=
\{\kappa<r_a<s_*\}\cap B_\rho .
\]
The boundary condition \eqref{eq:forced-boundary-well} and the
separation of the wells imply
\[
\mathcal A_a\cap\partial B_\rho=\varnothing .
\]
Indeed, for \(a=1\) one has \(r_1\leq\kappa/4<\kappa\) on
\(\partial B_\rho\), while for \(a\neq1\), the separation of the wells
implies \(r_a>s_*\) on \(\partial B_\rho\).

On \(\mathcal A_a\), \(r_a\geq\kappa>0\), and direct differentiation
gives
\begin{equation}
\label{eq:forced-distance-equation}
\Delta r_a
=
\frac{|\nabla v|^2-|\nabla r_a|^2}{r_a}
+
\frac1{\eps^2}
DV(v)\cdot\frac{v-\sigma_a}{r_a}
-
\frac1\eps
F\cdot\frac{v-\sigma_a}{r_a}.
\end{equation}
The first two terms on the right-hand side are non-negative.  Hence
\[
\Delta r_a\geq-\frac1\eps|F|.
\]
Integrating over \(\mathcal A_a\), using the divergence theorem, yields
\begin{equation}
\label{eq:forced-level-flux-comparison}
\begin{aligned}
\eps
\int_{\{r_a=\kappa\}\cap B_\rho}
|\nabla r_a|\,\dd\cH^1
\leq{}&
\eps
\int_{\{r_a=s_*\}\cap B_\rho}
|\nabla r_a|\,\dd\cH^1\\
&+
\int_{\mathcal A_a}|F|\,\dd x .
\end{aligned}
\end{equation}

Since \(V(v)\geq c\kappa^2\) on \(\mathcal A_a\),
\begin{equation}
\label{eq:forced-transition-measure}
|\mathcal A_a|
\leq
\frac{C\eps}{\kappa^2}
\int_{\mathcal A_a}\frac{V(v)}{\eps}\,\dd x .
\end{equation}
The sets \(\mathcal A_a\) are pairwise disjoint.  Therefore,
\begin{align}
\kappa\sum_{a=1}^q\int_{\mathcal A_a}|F|\,\dd x
&\leq
\kappa
\left(
\int_{B_\rho}\frac{|F|^2}{\eps}\,\dd x
\right)^{1/2}
\left(
\eps\sum_{a=1}^q|\mathcal A_a|
\right)^{1/2}
\nonumber\\
&\leq
C\eps
\left(
\int_{B_\rho}\frac{|F|^2}{\eps}\,\dd x
\right)^{1/2}
\left(
\int_{B_\rho}\frac{V(v)}{\eps}\,\dd x
\right)^{1/2}.
\label{eq:forced-transition-force}
\end{align}

We now use \eqref{eq:forced-target-identity} with \(s=\kappa\).
Multiplying \eqref{eq:forced-level-flux-comparison} by \(\kappa\),
summing over \(a\), and using
\eqref{eq:forced-outer-level-flux} and
\eqref{eq:forced-transition-force}, we obtain
\[
\begin{aligned}
&\eps\kappa
\sum_{a=1}^q
\int_{\{r_a=\kappa\}\cap B_\rho}
|\nabla r_a|\,\dd\cH^1\\
&\qquad\leq
C\kappa\int_{B_\rho}\frac{V(v)}{\eps}\,\dd x
+
C\eps
\left(
\int_{B_\rho}\frac{|F|^2}{\eps}\,\dd x
\right)^{1/2}
\left(
\int_{B_\rho}\frac{V(v)}{\eps}\,\dd x
\right)^{1/2}.
\end{aligned}
\]
The boundary terms on \(\partial B_\rho\) are controlled by
\eqref{eq:forced-boundary-term}, and the bulk forcing terms are
controlled by \eqref{eq:forced-target-force-young}.  Choosing the
constant \(\delta\) in that estimate sufficiently small and absorbing
the resulting potential term, we conclude that
\eqref{eq:forced-refined-well} holds for every common regular value
\(\kappa\).

For a general \(0<\kappa<\mu/4\), choose common regular values
\(\kappa_j\downarrow\kappa\) with \(\kappa_j<\mu/4\).  The boundary
condition remains valid because
\[
|v-\sigma_1|\leq\frac{\kappa}{4}
\leq\frac{\kappa_j}{4}
\qquad\text{on }\partial B_\rho .
\]
Moreover,
\[
\Upsilon(\kappa,\rho)
\subset
\Upsilon(\kappa_j,\rho).
\]
Applying the estimate at \(\kappa_j\) and passing to the limit gives
\eqref{eq:forced-refined-well}.
\end{proof}

\begin{rem}[Correspondence with Bethuel]

For \(F=0\), the integration-by-parts identity
\eqref{eq:forced-target-identity} corresponds to
\cite[Lemmas~4.2--4.5]{Bethuel2025}.  Estimate
\eqref{eq:forced-fixed-well}, together with the estimate on the
complementary region, combines the roles of
\cite[Propositions~4.1 and 4.11]{Bethuel2025}.

The refined estimate \eqref{eq:forced-refined-well} is the forced
counterpart of \cite[Proposition~4.8]{Bethuel2025}.  The monotonicity
of the level-set flux used in Bethuel's proof is replaced here by
\eqref{eq:forced-level-flux-comparison}.  The forcing contributes the
last term in \eqref{eq:forced-distance-equation} and the direct bulk
term in \eqref{eq:forced-target-force}.
\end{rem}

\subsection{The forced Pohozaev identity and \(3/2\)-decay}

\begin{lem}[Forced Pohozaev identity]
\label{lem:forced-pohozaev}

For almost every \(0<r<1\),
\begin{equation}
\label{eq:forced-pohozaev-identity}
\begin{aligned}
\frac2\eps\int_{B_r}V(v)\,\dd x
={}&
r\int_{\partial B_r}
\left\{
\frac{\eps}{2}
\left(
|\nabla_\tau v|^2-|\partial_rv|^2
\right)
+\frac1\eps V(v)
\right\}\dd\cH^1\\
&-
\int_{B_r}F\cdot(x\cdot\nabla v)\,\dd x .
\end{aligned}
\end{equation}
Consequently,
\begin{equation}
\label{eq:forced-pohozaev-bound}
\begin{aligned}
\frac2\eps\int_{B_r}V(v)\,\dd x
\leq{}&
r\int_{\partial B_r}e_\eps(v)\,\dd\cH^1\\
&+
r
\left(
\int_{B_r}\frac{|F|^2}{\eps}\,\dd x
\right)^{1/2}
\left(
2E_\eps(v;B_r)
\right)^{1/2}.
\end{aligned}
\end{equation}
\end{lem}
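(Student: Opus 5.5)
The plan is to test the equation \eqref{eq:forced-elliptic} against the radial vector field $X(x)=x$, that is, to take the inner product with $x\cdot\nabla v$ and integrate over $B_r$. The computation is pointwise on $B_1$ and then integrated. It is cleanest to phrase it through the diffuse stress
\[
 T_{ij}:=e_\eps(v)\delta_{ij}-\eps\,\partial_iv\cdot\partial_jv .
\]
Since $v\in C^2(B_1)$, a direct computation using the equation gives
\[
 \partial_jT_{ij}
 =\Bigl(\frac1\eps DV(v)-\eps\Delta v\Bigr)\cdot\partial_iv
 =F\cdot\partial_iv ,
\]
with the identity holding pointwise where $F$ is the continuous function $-\eps\Delta v+\eps^{-1}DV(v)$. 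Thus $F$ is locally bounded and all integrals make sense. Also $\tr T=\frac2\eps V(v)$ in two dimensions, because the gradient parts cancel: $2\cdot\frac\eps2|\nabla v|^2-\eps|\nabla v|^2=0$.

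Next I multiply $\partial_jT_{ij}=F\cdot\partial_iv$ by $x_i$ and integrate over $B_r$. Integration by parts gives
\[
 \int_{\partial B_r}x_iT_{ij}\nu_j\,\dd\cH^1-\int_{B_r}\tr T\,\dd x
 =\int_{B_r}F\cdot(x\cdot\nabla v)\,\dd x .
\]
On $\partial B_r$ we have $x=r\nu$, so the boundary integrand equals
\[
 rT_{\nu\nu}=r\Bigl(e_\eps(v)-\eps|\partial_rv|^2\Bigr)
 =r\Bigl(\tfrac\eps2(|\nabla_\tau v|^2-|\partial_rv|^2)+\tfrac1\eps V(v)\Bigr).
\]
Substituting $\tr T=\frac2\eps V(v)$ and rearranging yields \eqref{eq:forced-pohozaev-identity}. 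The identity then holds for every $r<1$, since $v$ is $C^1$ up to $\partial B_r$. The \emph{almost every $r$} qualifier is only needed if one wants $F\in L^2$ merely distributional. In that case I would mollify the test function in $r$, using a radial cutoff $\phi_k(|x|)\to\mathbf 1_{B_r}$, and pass to the limit at Lebesgue points of $r\mapsto\int_{\partial B_r}e_\eps(v)$.

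For \eqref{eq:forced-pohozaev-bound}, I bound the boundary term by $r\int_{\partial B_r}e_\eps(v)$, using $|\nabla_\tau v|^2-|\partial_rv|^2\le|\nabla v|^2$. The bulk term is handled with $|x|\le r$ and Cauchy--Schwarz:
\[
 \Bigl|\int_{B_r}F\cdot(x\cdot\nabla v)\Bigr|
 \le r\Bigl(\int_{B_r}\frac{|F|^2}{\eps}\Bigr)^{1/2}
 \Bigl(\int_{B_r}\eps|\nabla v|^2\Bigr)^{1/2}
 \le r\Bigl(\int_{B_r}\frac{|F|^2}{\eps}\Bigr)^{1/2}\bigl(2E_\eps(v;B_r)\bigr)^{1/2}.
\]

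The main (mild) obstacle is the regularity bookkeeping, namely justifying the divergence identity when $F$ is only in $L^2$. Since $v\in C^2(B_1)$, $F$ is in fact continuous in the interior, so the classical computation suffices. The a.e. restriction is harmless and comes for free from the radial-cutoff argument if one prefers it.
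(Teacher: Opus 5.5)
Your proposal is correct and follows the paper's proof: both use the stress $T_{ij}=e_\eps(v)\delta_{ij}-\eps\,\partial_iv\cdot\partial_jv$ with $\partial_jT_{ij}=F\cdot\partial_iv$ and $\tr T=\frac2\eps V(v)$, integrate $\partial_j(T_{ij}x_i)$ over $B_r$, evaluate $(Tx)\cdot\nu=r\bigl(e_\eps(v)-\eps|\partial_rv|^2\bigr)$ on $\partial B_r$, and finish with $|x|\le r$ and Cauchy--Schwarz. You are also right that the identity holds for every $r<1$, since $v\in C^2(\overline{B_r})$; the paper does not comment on the a.e.\ qualifier.
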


\begin{proof}

Define
\[
A_{ij}
:=
e_\eps(v)\delta_{ij}
-\eps\,\partial_i v\cdot\partial_jv .
\]
A direct computation gives
\[
\begin{aligned}
\partial_jA_{ij}
&=
\partial_i e_\eps(v)
-\eps\,\partial_{ji}v\cdot\partial_jv
-\eps\,\partial_i v\cdot\Delta v\\
&=
\left(
-\eps\Delta v+\frac1\eps DV(v)
\right)\cdot\partial_i v\\
&=
F\cdot\partial_i v .
\end{aligned}
\]
Since the spatial dimension is two,
\[
\tr A
=
2e_\eps(v)-\eps|\nabla v|^2
=
\frac2\eps V(v).
\]
Therefore,
\[
\partial_j(A_{ij}x_i)
=
\tr A+F\cdot(x\cdot\nabla v).
\]
Integrating over \(B_r\) gives
\begin{equation}
\label{eq:forced-pohozaev-pre}
\frac2\eps\int_{B_r}V(v)\,\dd x
=
\int_{\partial B_r}(Ax)\cdot\nu\,\dd\cH^1
-
\int_{B_r}F\cdot(x\cdot\nabla v)\,\dd x .
\end{equation}

On \(\partial B_r\), \(x=r\nu\), so
\[
\begin{aligned}
(Ax)\cdot\nu
&=
r\left(e_\eps(v)-\eps|\partial_rv|^2\right)\\
&=
r\left\{
\frac{\eps}{2}
\left(
|\nabla_\tau v|^2-|\partial_rv|^2
\right)
+\frac1\eps V(v)
\right\}.
\end{aligned}
\]
Substitution into \eqref{eq:forced-pohozaev-pre} proves
\eqref{eq:forced-pohozaev-identity}.

Finally,
\[
\begin{aligned}
\left|
\int_{B_r}F\cdot(x\cdot\nabla v)\,\dd x
\right|
&\leq
r
\left(
\int_{B_r}\frac{|F|^2}{\eps}\,\dd x
\right)^{1/2}
\left(
\int_{B_r}\eps|\nabla v|^2\,\dd x
\right)^{1/2}\\
&\leq
r
\left(
\int_{B_r}\frac{|F|^2}{\eps}\,\dd x
\right)^{1/2}
\left(
2E_\eps(v;B_r)
\right)^{1/2}.
\end{aligned}
\]
Moreover,
\[
\frac{\eps}{2}
\left(
|\nabla_\tau v|^2-|\partial_rv|^2
\right)
+\frac1\eps V(v)
\leq e_\eps(v).
\]
This proves \eqref{eq:forced-pohozaev-bound}.
\end{proof}

\begin{rem}[Correspondence with Bethuel]

When \(F=0\), identity \eqref{eq:forced-pohozaev-identity} is
\cite[Lemma~3.8, (3.22)]{Bethuel2025}, and
\eqref{eq:forced-pohozaev-bound} corresponds to
\cite[Proposition~3.9, (3.24)]{Bethuel2025}, after adjusting the
normalization of the energy.

The only additional term is
\[
-\int_{B_r}F\cdot(x\cdot\nabla v)\,\dd x,
\]
which is controlled by Cauchy--Schwarz at the action scale
\(\int |F|^2/\eps\).
\end{rem}

\begin{prop}[Forced \(3/2\)-decay]
\label{prop:forced-decay}

There exist \(\eta_{\rm d}=\eta_{\rm d}(V,L)>0\) and
\(C=C(V,L)<\infty\) such that the following holds.

Assume \(0<\eps\leq1\), \(v\) satisfies
\eqref{eq:forced-uniform-bound}, and
\[
E_\eps(v;B_1)\leq\eta_{\rm d}.
\]
Then
\begin{equation}
\label{eq:forced-decay}
\begin{aligned}
E_\eps\left(v;B_{1/2}\right)
\leq C\bigg\{&
E_\eps(v;B_1)^{3/2}
+\eps E_\eps(v;B_1)\\
&+
\left(
\int_{B_1}\frac{|F|^2}{\eps}\,\dd x
\right)^{1/2}
E_\eps(v;B_1)^{1/2}\\
&+
\eps^2\int_{B_1}\frac{|F|^2}{\eps}\,\dd x
\bigg\}.
\end{aligned}
\end{equation}
\end{prop}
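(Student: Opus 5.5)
Write $E:=E_\eps(v;B_1)$ and $D:=\int_{B_1}|F|^2/\eps\,\dd x$. The plan follows Bethuel's three-step scheme: choose a good circle, use the planar shadow lemma to pass to a smaller circle lying entirely near one well, and then combine the refined near-well estimate with the forced Pohozaev bound.

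\textbf{Step 1 (good circle).} By \eqref{eq:forced-circle-average} of Lemma~\ref{lem:forced-circle}, applied with $r_0=9/16$ and $r_1=4/5$, there is a radius $\rho_1\in[9/16,4/5]$ such that
\[
\int_{\partial B_{\rho_1}}e_\eps(v)\,\dd\cH^1\le CE.
\]
If $\eta_{\rm d}\le h_0/C$, then \eqref{eq:forced-circle-bound} gives a well $\sigma_1$ with $\sup_{\partial B_{\rho_1}}|v-\sigma_1|\le C E^{1/2}$. Applying \eqref{eq:forced-full-by-potential} at $\rho_1$ then bounds $E_\eps(v;B_{\rho_1})$ by
\[
C\Bigl(\int_{B_{\rho_1}}\frac{V(v)}{\eps}\,\dd x+\eps E+\eps^2D\Bigr).
\]
So the whole problem reduces to controlling the potential energy.

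\textbf{Step 2 (potential energy via Pohozaev on an inner good circle).} Fix $\kappa:=C_1E^{1/2}$, with $C_1$ chosen so that the boundary hypothesis \eqref{eq:forced-boundary-well} holds on $\partial B_{\rho_1}$ ($\kappa<\mu/4$ holds once $\eta_{\rm d}$ is small).

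I want a radius $r\in[1/2,9/16]$ satisfying two conditions. First, $\partial B_r$ misses the set where $v$ is far from all wells. Second, $\partial B_r$ carries energy at most $CE$.

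For the first condition, apply Lemma~\ref{lem:planar-shadow} to $w:=\min_a r_a$, or more precisely to $w:=\dist(v,\Sigma)$ restricted to the components not adjacent to $\sigma_1$ (equivalently $w=r_1$ on the set where $r_1<\mu/2$). Take the levels $s\approx\kappa$, regular, and $h=\mu/4$. The bad radii then have measure at most $\tfrac12\cH^1(\{w=s\})$.

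I control this length by the coarea formula over $s\in(\kappa,2\kappa)$, using $|\nabla r_a|\,\eps\cdot(\kappa/\eps)\lesssim e_\eps(v)$ on $\{r_a\sim\kappa\}$. This gives a level $s$ with
\[
\cH^1(\{w=s\})\lesssim \frac{1}{\kappa}\int\sqrt{V}\,|\nabla v|\cdot\frac{1}{\kappa}\lesssim \frac{E}{\kappa}\sim E^{1/2}.
\]
This is small, so most radii are good. Among the good radii, averaging again gives one with boundary energy at most $CE$.

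On such a circle, $v$ stays within $\mu/4$ of $\sigma_1$ up to the $\kappa$-level, by connectedness along the circle together with Lemma~\ref{lem:forced-circle}. Then \eqref{eq:forced-pohozaev-bound} gives
\[
\int_{B_{1/2}}\frac{V}{\eps}\,\dd x\le C\Bigl(\int_{\partial B_r}\frac{V(v)}{\eps}\,\dd\cH^1+\text{tangential/radial part}\Bigr)+CD^{1/2}E^{1/2}.
\]

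\textbf{Step 3 (closing the estimate).} Rather than bounding the circle term crudely by $E$, I will use the refined estimate \eqref{eq:forced-refined-well}. Inside $B_{\rho_1}$ the region $\Upsilon(\kappa,\rho_1)$ has energy at most
\[
C\bigl(\kappa E+\eps E+\eps D^{1/2}E^{1/2}+\eps^2D\bigr)\le C\bigl(E^{3/2}+\eps E+D^{1/2}E^{1/2}+\eps^2D\bigr).
\]
The complement consists of points where some $r_a\ge\kappa$, lying in the interior. By the shadow lemma, together with the fact that level curves enclose these regions, this complement is contained in the union of the components $G_\ell$. Their total perimeter is $\lesssim E^{1/2}$, so by the isoperimetric inequality their area is $\lesssim E$. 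I then estimate the energy there through the Pohozaev identity on circles avoiding them, with the forcing cross-term $D^{1/2}E^{1/2}$ arising from Cauchy--Schwarz exactly as in \eqref{eq:forced-pohozaev-bound}.

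\textbf{Main obstacle.} The main obstacle is Step 2/3: making rigorous that the energy carried in the ``bubbles'' $\{r_1\ge\kappa\}$ inside $B_{1/2}$ is $O(E^{3/2})$ rather than $O(E)$. My first attempt is Bethuel's route: integrate Pohozaev over the good radii and use \eqref{eq:forced-refined-well} to gain the factor $\kappa\sim E^{1/2}$ on the potential term. All forcing contributions should be absorbed via Young's inequality into the $D^{1/2}E^{1/2}$ and $\eps^2D$ terms.
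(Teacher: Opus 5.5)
There is a genuine gap, and it is exactly the step you flag as the main obstacle. As written, Step~2 only yields $\int_{B_r}V(v)/\eps\,\dd x\le C(E+D^{1/2}E^{1/2})$, with no gain. The repair you propose in Step~3 does not work either. An isoperimetric bound on the area of the bubbles $\{r_1\ge\kappa\}$ says nothing about their energy, because $e_\eps(v)$ can be of size $1/\eps$ there (on $\Xi_\rho$ one only has $e_\eps(v)\le CV(v)/\eps$).

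The missing idea is that the bubbles never need to be estimated directly. Apply Lemma~\ref{lem:planar-shadow} to $w=\chi_{\sigma_1}\circ v$ with a regular level $s_0\in(\kappa/2,3\kappa/4)$ and thresholds $h\uparrow\kappa$, not $h=\mu/4$. The good radii are then those $r$ with $\partial B_r\subset\{|v-\sigma_1|<\kappa\}\subset\Upsilon(\kappa,\rho_1)$. Once this set has measure bounded below, apply Fubini to the energy of $\Upsilon(\kappa,\rho_1)$ alone. Together with \eqref{eq:forced-refined-well}, this gives a good radius $\tau$ with
\[
\int_{\partial B_\tau}e_\eps(v)\,\dd\cH^1\le C\,E_\eps(v;\Upsilon(\kappa,\rho_1))\le C\bigl(E^{3/2}+\eps E+\eps D^{1/2}E^{1/2}+\eps^2D\bigr).
\]
Then \eqref{eq:forced-pohozaev-bound} on $B_\tau$ controls the potential energy of the whole disc, bubbles included, by this boundary term plus $CD^{1/2}E^{1/2}$. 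Your remark about integrating Pohozaev over good radii points in this direction. However, it only gains the factor $\kappa$ if the good circles lie inside $\Upsilon(\kappa,\rho_1)$, and your choice $h=\mu/4$ does not ensure that.

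Two further points need correcting.

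First, your length estimate is off by a factor $1/\kappa$. On $\{\kappa/2<w<3\kappa/4\}$ one has $|\nabla w|\le C e_\eps(v)/\kappa$. Averaging over an $s$-interval of length of order $\kappa$ therefore gives $\cH^1(\{w=s_0\}\cap B_{\rho_1})\le CE/\kappa^2=C/K^2$ for $\kappa=K\sqrt E$, not $O(E^{1/2})$. The bad radii are few because you choose $K$ large, not because $E$ is small. This is also what guarantees that the good set has measure bounded below.

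Second, your radii are in the wrong order. Estimate \eqref{eq:forced-full-by-potential} must be applied on a disc contained in the Pohozaev disc, so that its potential term is the one you have controlled, and its radius must lie in $[9/16,4/5]$. You apply it at the outer radius $\rho_1$ while doing Pohozaev at $r\in[1/2,9/16]$, which leaves $\int_{B_{\rho_1}}V(v)/\eps\,\dd x$ uncontrolled. Instead:
\begin{enumerate}
\item take the outer good circle with $\rho_1\in[3/4,4/5]$;
\item look for the Pohozaev circle in $[5/8,\rho_1]$;
\item finish with a further good radius $r_*\in[9/16,5/8]$ for \eqref{eq:forced-full-by-potential}.
\end{enumerate}
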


\begin{proof}

Set
\[
E:=E_\eps(v;B_1),
\qquad
D:=\int_{B_1}\frac{|F|^2}{\eps}\,\dd x .
\]
If \(E=0\), then \(\nabla v=0\) and \(V(v)=0\) almost everywhere.
By continuity, \(v\equiv\sigma\) for some \(\sigma\in\Sigma\), and the
claim is immediate.  We therefore assume \(E>0\).

By \eqref{eq:forced-circle-average}, there exists
\(\rho\in[3/4,4/5]\) such that
\begin{equation}
\label{eq:forced-outer-good-circle}
\int_{\partial B_\rho}e_\eps(v)\,\dd\cH^1
\leq20E.
\end{equation}
If \(\eta_{\rm d}\) is sufficiently small, Lemma~\ref{lem:forced-circle}
gives \(\sigma\in\Sigma\) such that
\[
\sup_{\partial B_\rho}|v-\sigma|
\leq C\sqrt E .
\]
Fix \(K\geq4C\) and set
\[
\kappa:=K\sqrt E .
\]
After decreasing \(\eta_{\rm d}\), we may assume
\[
0<\kappa<\frac{\mu}{4},
\qquad
|v-\sigma|\leq\frac{\kappa}{4}
\quad\text{on }\partial B_\rho .
\]

Applying \eqref{eq:forced-refined-well}, we obtain
\begin{equation}
\label{eq:forced-good-region-energy}
\begin{aligned}
E_\eps(v;\Upsilon(\kappa,\rho))
\leq C\bigg\{&
E^{3/2}+\eps E\\
&+
\eps D^{1/2}E^{1/2}
+\eps^2D
\bigg\}.
\end{aligned}
\end{equation}

Define
\[
w:=\chi_\sigma\circ v.
\]
Since \(w\leq\kappa/4\) on \(\partial B_\rho\), all level sets
\[
\{w=s\},
\qquad
s\in[\kappa/2,3\kappa/4],
\]
are compactly contained in \(B_\rho\).  On the set
\(\{\kappa/2<w<3\kappa/4\}\),
\[
|\nabla w|
\leq
\frac1\kappa|\nabla(w^2)|.
\]
By \eqref{eq:forced-modica-mortola} and the coarea formula,
\[
\begin{aligned}
\int_{\kappa/2}^{3\kappa/4}
\cH^1(\{w=s\}\cap B_\rho)\,\dd s
&=
\int_{\{\kappa/2<w<3\kappa/4\}}|\nabla w|\,\dd x\\
&\leq
\frac{C}{\kappa}
\int_{B_\rho}e_\eps(v)\,\dd x\\
&\leq
\frac{CE}{\kappa}.
\end{aligned}
\]
Since the interval \([\kappa/2,3\kappa/4]\) has length \(\kappa/4\),
there exists a regular value
\[
s_0\in(\kappa/2,3\kappa/4)
\]
such that
\begin{equation}
\label{eq:forced-short-level}
\cH^1(\{w=s_0\}\cap B_\rho)
\leq
\frac{CE}{\kappa^2}
=
\frac{C}{K^2}.
\end{equation}
Choose \(K\) sufficiently large that \(C/K^2<1/64\).

For every \(h\in(3\kappa/4,\kappa)\), Lemma~\ref{lem:planar-shadow}
applied with \(s=s_0\) gives
\[
\mathcal L^1
\left(
\left\{
r\in[5/8,\rho]:
\sup_{\partial B_r}w>h
\right\}
\right)
\leq
\frac12
\cH^1(\{w=s_0\}\cap B_\rho)
<\frac1{128}.
\]
Choose a sequence \(h_j\uparrow\kappa\).  Since the corresponding
sets decrease,
\[
\mathcal L^1
\left(
\left\{
r\in[5/8,\rho]:
\sup_{\partial B_r}w\geq\kappa
\right\}
\right)
\leq\frac1{128}.
\]
Therefore
\[
\mathcal I
:=
\left\{
r\in[5/8,\rho]:
\partial B_r\subset\{w<\kappa\}
\right\}
\]
satisfies
\[
|\mathcal I|
\geq
(\rho-5/8)-\frac1{128}
\geq
\frac1{8}-\frac1{128}
=
\frac{15}{128}.
\]

For every \(r\in\mathcal I\),
\[
\partial B_r\subset B_\kappa(\sigma)
\subset\Upsilon(\kappa,\rho).
\]
By Fubini's theorem,
\[
\int_{\mathcal I}
\left(
\int_{\partial B_r}e_\eps(v)\,\dd\cH^1
\right)\dd r
\leq
E_\eps(v;\Upsilon(\kappa,\rho)).
\]
Hence there exists \(\tau\in\mathcal I\) such that
\begin{equation}
\label{eq:forced-good-inner-circle}
\begin{aligned}
\int_{\partial B_\tau}e_\eps(v)\,\dd\cH^1
\leq C\bigg\{&
E^{3/2}+\eps E\\
&+
\eps D^{1/2}E^{1/2}
+\eps^2D
\bigg\}.
\end{aligned}
\end{equation}

Applying \eqref{eq:forced-pohozaev-bound} on \(B_\tau\), using
\(\tau\leq1\), \(\tau\geq5/8\), and
\(E_\eps(v;B_\tau)\leq E\), gives
\[
\begin{aligned}
\int_{B_\tau}\frac{V(v)}{\eps}\,\dd x
\leq C\bigg\{&
\int_{\partial B_\tau}e_\eps(v)\,\dd\cH^1
+D^{1/2}E^{1/2}
\bigg\}.
\end{aligned}
\]
Since \(\eps\leq1\), the mixed term in
\eqref{eq:forced-good-inner-circle} is bounded by
\(D^{1/2}E^{1/2}\).  Hence
\begin{equation}
\label{eq:forced-inner-potential}
\begin{aligned}
\int_{B_{5/8}}\frac{V(v)}{\eps}\,\dd x
\leq C\bigg\{&
E^{3/2}+\eps E\\
&+
D^{1/2}E^{1/2}
+\eps^2D
\bigg\}.
\end{aligned}
\end{equation}

Finally, by \eqref{eq:forced-circle-average}, choose
\(r_*\in[9/16,5/8]\) such that
\[
\int_{\partial B_{r_*}}e_\eps(v)\,\dd\cH^1
\leq
16E_\eps\left(
v;B_{5/8}\setminus B_{9/16}
\right)
\leq16E .
\]
Since \(B_{1/2}\subset B_{r_*}\), applying
\eqref{eq:forced-full-by-potential} on \(B_{r_*}\), and using
\eqref{eq:forced-inner-potential}, proves \eqref{eq:forced-decay}.
\end{proof}

\begin{rem}[Correspondence with Bethuel]

Proposition~\ref{prop:forced-decay} is the forced analogue of
\cite[Proposition~1.12]{Bethuel2025}.  Its proof follows the same
three ingredients used by Bethuel:

\begin{enumerate}
\item the refined near-well estimate from
\cite[Proposition~4.8]{Bethuel2025};

\item the construction of a good inner circle through
\cite[Proposition~2.9 and Lemma~2.10]{Bethuel2025};

\item the Pohozaev estimate from
\cite[Proposition~3.9]{Bethuel2025}.
\end{enumerate}

The forcing errors are tracked explicitly in
\eqref{eq:forced-good-region-energy},
\eqref{eq:forced-good-inner-circle}, and
\eqref{eq:forced-inner-potential}.
\end{rem}

\subsection{Scaling and dyadic clearing}

Let \(x_0\in B_1\), let \(R>0\), and assume
\(B_R(x_0)\subset B_1\).  Define
\[
v_R(y):=v(x_0+Ry),
\qquad
F_R(y):=RF(x_0+Ry),
\qquad
\eps_R:=\frac{\eps}{R}.
\]
Then
\[
-\eps_R\Delta v_R+\frac1{\eps_R}DV(v_R)=F_R
\qquad\text{in }B_1.
\]
Moreover,
\begin{equation}
\label{eq:forced-scaling}
E_{\eps_R}(v_R;B_1)
=
\frac1R E_\eps(v;B_R(x_0)),
\end{equation}
and
\begin{equation}
\label{eq:forced-action-scaling}
\int_{B_1}\frac{|F_R|^2}{\eps_R}\,\dd y
=
R\int_{B_R(x_0)}\frac{|F|^2}{\eps}\,\dd x .
\end{equation}

Whenever \(\eps_R\leq1\) and the energy smallness condition in
Proposition~\ref{prop:forced-decay} is satisfied, applying that
proposition to \(v_R\) gives
\begin{equation}
\label{eq:forced-scaled-decay}
\begin{aligned}
E_\eps\left(v;B_{R/2}(x_0)\right)
\leq C\bigg\{&
\frac{E_\eps(v;B_R(x_0))^{3/2}}{\sqrt R}
+\frac{\eps}{R}E_\eps(v;B_R(x_0))\\
&+
R
\left(
\int_{B_R(x_0)}\frac{|F|^2}{\eps}\,\dd x
\right)^{1/2}
E_\eps(v;B_R(x_0))^{1/2}\\
&+
\eps^2
\int_{B_R(x_0)}\frac{|F|^2}{\eps}\,\dd x
\bigg\}.
\end{aligned}
\end{equation}

\begin{rem}[Correspondence with Bethuel]

The scaling relations \eqref{eq:forced-scaling} and
\eqref{eq:forced-scaled-decay} correspond to
\cite[(1.52)--(1.55) and (1.61)]{Bethuel2025}, up to the choice of the
inner radius.  Formula \eqref{eq:forced-action-scaling} is the additional
scaling identity required for the parabolic forcing.
\end{rem}

\begin{lem}[Weak clearing-out]
\label{lem:forced-weak-clearing}

There exists \(\eta_{\rm w}=\eta_{\rm w}(V,L)>0\) such that, whenever
\[
0<\eps\leq4,
\qquad
\|v\|_{L^\infty(B_1)}
+\eps\|\nabla v\|_{L^\infty(B_1)}
\leq L,
\]
and
\[
E_\eps(v;B_1)\leq\eta_{\rm w}\eps,
\]
there exists \(\sigma\in\Sigma\) such that
\begin{equation}
\label{eq:forced-weak-confinement}
v(B_{3/4})\subset B_{\mu/2}(\sigma).
\end{equation}
\end{lem}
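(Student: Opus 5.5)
The plan is a pointwise argument that uses only the gradient bound and the positivity of \(V\) away from the wells; the equation and the forcing play no role. Fix \(x\in B_{3/4}\) and set \(r:=\eps/(4L)\). Since \(\eps\leq4\) we have \(r\leq1/L\), and we may assume \(L\geq4\) (enlarging \(L\) is harmless), so that \(r\leq1/4\) and \(B_r(x)\subset B_1\). From \eqref{eq:forced-uniform-bound} we get \(|\nabla v|\leq L/\eps\). Hence on \(B_r(x)\) we have \(|v(y)-v(x)|\leq Lr/\eps=1/4\). To keep constants clean, replace this with the radius \(r:=\theta\eps/L\), where \(\theta\) is chosen so that \(\theta\leq\mu/8\), giving the oscillation bound \(|v(y)-v(x)|\leq\mu/8\) on \(B_r(x)\).

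First suppose \(\dist(v(x),\Sigma)\geq 3\mu/8\). Then every \(y\in B_r(x)\) satisfies \(\dist(v(y),\Sigma)\geq\mu/4\) and \(|v(y)|\leq L\). By \eqref{eq:forced-potential-away},
\[
E_\eps(v;B_1)\geq\frac1\eps\int_{B_r(x)}V(v)\,\dd y\geq\frac{c_0\pi r^2}{\eps}=\frac{c_0\pi\theta^2}{L^2}\,\eps .
\]
This contradicts the hypothesis \(E_\eps(v;B_1)\leq\eta_{\rm w}\eps\) once we take \(\eta_{\rm w}<c_0\pi\theta^2/L^2\). Therefore every \(x\in B_{3/4}\) satisfies \(\dist(v(x),\Sigma)<3\mu/8<\mu/2\), so \(v(x)\in B_{\mu/2}(\sigma_x)\) for a unique \(\sigma_x\in\Sigma\). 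Uniqueness holds because the balls \(\overline{B_\mu(\sigma_a)}\) are pairwise disjoint.

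It remains to show that \(\sigma_x\) does not depend on \(x\). The image \(v(B_{3/4})\) is connected, since \(v\) is continuous and \(B_{3/4}\) is connected. This image is contained in the union of the disjoint open balls \(B_{\mu/2}(\sigma_a)\), so it must lie in exactly one of them, which gives \eqref{eq:forced-weak-confinement}. Notice that the strict gap \(3\mu/8<\mu/2\) is not even needed for this step; disjointness of the balls suffices.

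The only point that needs care is the choice of radius. We need \(B_r(x)\subset B_1\) for \(x\in B_{3/4}\), which requires \(r\leq1/4\); this follows from \(\eps\leq4\) after taking \(L\) large, or by shrinking \(\theta\) to \(\min(\mu/8,1/(16))\)-type values. We also need the oscillation of \(v\) on \(B_r(x)\) to be smaller than the gap between \(\mu/4\) and the threshold used for \(v(x)\). Both are routine, and \(\eta_{\rm w}\) then depends only on \(V\), \(L\), and \(\mu=\mu(V)\). I do not expect any genuine obstacle.
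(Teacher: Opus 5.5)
Your proposal is correct and follows essentially the same argument as the paper: a contradiction on a ball of radius comparable to \(\eps\), using \(|\nabla v|\leq L/\eps\) and the lower bound \eqref{eq:forced-potential-away}, followed by connectedness of \(v(B_{3/4})\) and disjointness of the well balls. Your radius \(r=\theta\eps/L\) (with \(L\) enlarged if necessary so that \(r\leq1/4\)) gives \(E_\eps(v;B_1)\geq c\,\eps\) directly. This is slightly cleaner than the paper's radius \(c_*\min\{\eps,1\}\), which forces a case split between \(\eps\leq1\) and \(1\leq\eps\leq4\).
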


\begin{proof}

Suppose, by contradiction, that there exists \(x_0\in B_{3/4}\) such that
\[
\dist(v(x_0),\Sigma)\geq\frac{\mu}{2}.
\]
Since
\[
\|\nabla v\|_{L^\infty(B_1)}
\leq\frac{L}{\eps},
\]
we have
\[
|v(x)-v(x_0)|
\leq
\frac{L}{\eps}|x-x_0|.
\]
Choose
\[
r_0:=c_*(V,L,\mu)\min\{\eps,1\}
\]
with \(c_*>0\) sufficiently small that
\(r_0\leq1/8\) and
\[
\frac{L}{\eps}r_0\leq\frac{\mu}{4}.
\]
Then
\[
B_{r_0}(x_0)\subset B_1
\]
and
\[
\dist(v(x),\Sigma)\geq\frac{\mu}{4}
\qquad\text{for }x\in B_{r_0}(x_0).
\]
By \eqref{eq:forced-potential-away},
\[
V(v(x))\geq c_0(V,L,\mu)
\qquad\text{on }B_{r_0}(x_0).
\]
Consequently,
\[
\begin{aligned}
E_\eps(v;B_1)
&\geq
\frac1\eps
\int_{B_{r_0}(x_0)}V(v)\,\dd x\\
&\geq
\frac{c_0\pi r_0^2}{\eps}\\
&\geq
\begin{cases}
c_1\eps,&0<\eps\leq1,\\[1mm]
c_1\eps^{-1},&1\leq\eps\leq4.
\end{cases}
\end{aligned}
\]
For \(1\leq\eps\leq4\),
\[
\eps^{-1}\geq\frac{\eps}{16}.
\]
Thus in both cases
\[
E_\eps(v;B_1)\geq c_2\eps.
\]
Choosing \(\eta_{\rm w}<c_2\) gives a contradiction.

Therefore
\[
v(B_{3/4})
\subset
\bigcup_{a=1}^q B_{\mu/2}(\sigma_a).
\]
The balls are pairwise disjoint, \(B_{3/4}\) is connected, and \(v\) is
continuous.  Hence \(v(B_{3/4})\) is contained in one of the balls,
which proves \eqref{eq:forced-weak-confinement}.
\end{proof}

\begin{rem}[Correspondence with Bethuel]

Lemma~\ref{lem:forced-weak-clearing} corresponds to the weak
clearing-out result \cite[Proposition~6.1]{Bethuel2025}.  Under the
uniform scaled gradient bound, the proof is an elementary ball
argument.  The forcing does not enter this step.
\end{rem}

\begin{lem}[Clearing-out at the centre]
\label{lem:forced-dyadic-centre}

There exist constants
\[
\eta_{\rm c}=\eta_{\rm c}(V,L)>0,
\qquad
\gamma_{\rm c}=\gamma_{\rm c}(V,L)>0
\]
such that, whenever
\[
0<\eps\leq4,
\qquad
\|v\|_{L^\infty(B_1)}
+\eps\|\nabla v\|_{L^\infty(B_1)}
\leq L,
\]
and
\[
E_\eps(v;B_1)\leq\eta_{\rm c},
\qquad
\int_{B_1}\frac{|F|^2}{\eps}\,\dd x\leq\gamma_{\rm c},
\]
one has
\[
\dist(v(0),\Sigma)\leq\frac{\mu}{2}.
\]
\end{lem}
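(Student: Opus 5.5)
The plan is to iterate the scaled decay estimate \eqref{eq:forced-scaled-decay} on dyadic balls $B_{R_n}$, $R_n:=2^{-n}$, centred at the origin. We stop at the first radius comparable to $\eps$ (or $\sqrt\eps$). At that radius the normalized energy is so small that Lemma~\ref{lem:forced-weak-clearing}, applied to the rescaled map $v_{R_n}$, confines $v(0)$ to $B_{\mu/2}(\sigma)$. If $\eps\geq 1/4$ we skip the iteration: $E_\eps(v;B_1)\leq\eta_{\rm c}\leq 4\eta_{\rm c}\eps$, so the weak clearing-out lemma applies directly once $\eta_{\rm c}\leq\eta_{\rm w}/4$.

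For $\eps<1/4$, set $E_n:=E_\eps(v;B_{R_n})$ and $\theta_n:=E_n/R_n$, the normalized energy $E_{\eps_{R_n}}(v_{R_n};B_1)$. Also set $D:=\int_{B_1}|F|^2/\eps$; by monotonicity of the domain, $D_n:=\int_{B_{R_n}}|F|^2/\eps\leq D\leq\gamma_{\rm c}$. Dividing \eqref{eq:forced-scaled-decay} by $R_{n+1}=R_n/2$ gives
\[
\theta_{n+1}\leq C\Bigl(\theta_n^{3/2}+\frac{\eps}{R_n}\theta_n+R_n^{1/2}D^{1/2}\theta_n^{1/2}+\frac{\eps^2}{R_n}D\Bigr),
\]
valid as long as $\eps/R_n\leq1$ and $\theta_n\leq\eta_{\rm d}$. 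The aim is to prove by induction the bound $\theta_n\leq \delta$ for a small $\delta$. Choose $\eta_{\rm c}\leq\delta$, let $\delta$ be small so that $C\delta^{1/2}\leq 1/4$, and use $\eps/R_n\leq \delta_1$ in the range where we iterate. With $\gamma_{\rm c}$ small, $C R_n^{1/2}D^{1/2}\theta_n^{1/2}\leq C\gamma_{\rm c}^{1/2}\delta^{1/2}\leq\delta/4$ and $C\eps^2 D/R_n\leq C\eps\gamma_{\rm c}\leq\delta/4$. So the smallness propagates, and in fact $\theta_{n+1}\leq\tfrac12\theta_n+C\gamma_{\rm c}^{1/2}\delta^{1/2}R_n^{1/2}+C\eps\gamma_{\rm c}$.

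We stop at the index $N$ with $R_N\sim K\eps$, where $K$ is a large fixed constant, so that $\eps_{R_N}=\eps/R_N\sim 1/K\leq 4$. The needed conclusion is $\theta_N\leq\eta_{\rm w}\eps_{R_N}\sim \eta_{\rm w}/K$, which is not just smallness but smallness relative to a fixed threshold. This is the main obstacle. The recursion gives geometric decay $\theta_n\lesssim 2^{-n}\theta_0$ plus forcing errors of size $\gamma_{\rm c}^{1/2}$ and $\eps\gamma_{\rm c}$. Since $K$ and $\eta_{\rm w}$ are fixed constants depending only on $(V,L)$, it suffices to make $\delta$ and $\gamma_{\rm c}$ small compared with $\eta_{\rm w}/K$. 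The term $\eps\theta_n/R_n$ is then at most $\theta_n/K$, which is harmless.

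I would handle this as follows. First fix $K$, using the weak clearing-out threshold, so that $\eps_{R_N}\in[1/K,2/K]$. Then choose $\delta,\gamma_{\rm c}\ll\eta_{\rm w}/K$, so that the invariant $\theta_n\leq\delta$ already yields $\theta_N\leq\delta\leq\eta_{\rm w}\eps_{R_N}$. The $3/2$-decay is only needed to keep $\theta_n$ from growing. With this choice, Lemma~\ref{lem:forced-weak-clearing} applied to $v_{R_N}$ gives $v_{R_N}(0)=v(0)\in B_{\mu/2}(\sigma)$. The uniform bound \eqref{eq:forced-uniform-bound} transfers to the rescaling with the same $L$, because $\eps_R\|\nabla v_R\|_\infty=\eps\|\nabla v\|_\infty$.
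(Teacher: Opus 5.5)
Your argument is correct. It shares the paper's skeleton (iterate the scaled decay estimate on dyadic balls, then finish with Lemma~\ref{lem:forced-weak-clearing}), but it stops at a different scale and so needs much less from the $3/2$-decay.

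The paper iterates down to $r_N\sim\sqrt\eps$, where $\eps_n\le\sqrt\eps\le\sqrt{\eps_0}$. At that scale the weak clearing-out threshold is $\eta_{\rm w}\eps/r_N\sim\eta_{\rm w}\sqrt\eps$, which depends on $\eps$. The paper therefore needs genuine decay: it sums $a_{n+1}\le\tfrac14a_n+C_2\gamma_{\rm c}r_n$ to obtain $E_\eps(v;B_{r_n})\le a_0r_n^3+C_3\gamma_{\rm c}r_n^2$, and hence $E_N\le\eta_{\rm w}\eps$.

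You stop at $R_N\sim K\eps$ instead. There the threshold $\eta_{\rm w}\eps_{R_N}\gtrsim\eta_{\rm w}/K$ is a fixed constant, so the decay estimate is only used to keep the normalized energy $E(B_R)/R$ below $\delta$, not to make it decay. Your order of choices is non-circular:
\begin{itemize}
\item first $K\ge4C$;
\item then $\delta\le\min\{\eta_{\rm d},\eta_{\rm w}/(2K),(4C)^{-2}\}$;
\item then $\gamma_{\rm c}\le\delta/(16C^2)$ and $\eta_{\rm c}\le\delta$.
\end{itemize}

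What each route buys: yours is shorter and avoids the geometric sum. The paper's gives, in addition, quantitative decay of the energy down to scale $\sqrt\eps$, which this lemma does not need.

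Two bookkeeping points. With $R_N\in[K\eps,2K\eps)$ one gets $\eps_{R_N}\in(1/(2K),1/K]$, not $[1/K,2/K]$; this is harmless because $\delta\le\eta_{\rm w}/(2K)$ covers it. Also, for $1/K\le\eps<1/4$ no dyadic radius with $R_N\ge K\eps$ exists, so your iteration is vacuous there. In that range $\theta_0\le\eta_{\rm c}\le\eta_{\rm w}/(2K)\le\eta_{\rm w}\eps$, so Lemma~\ref{lem:forced-weak-clearing} applies directly. Your case split at $\eps=1/4$ should therefore simply be made at $\eps=1/K$.
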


\begin{proof}

We first consider \(0<\eps\leq\eps_0\), where
\(\eps_0\in(0,1)\) will be chosen below.

Set
\[
r_n:=2^{-n},
\qquad
E_n:=E_\eps(v;B_{r_n}),
\qquad
a_n:=\frac{E_n}{r_n},
\]
and
\[
g_n
:=
r_n\int_{B_{r_n}}\frac{|F|^2}{\eps}\,\dd x,
\qquad
\eps_n:=\frac{\eps}{r_n}.
\]
As long as \(r_n\geq\sqrt\eps\),
\[
\eps_n\leq\sqrt\eps\leq\sqrt{\eps_0},
\qquad
g_n\leq\gamma_{\rm c}r_n.
\]

The rescaled uniform estimate is unchanged:
\[
\|v_{r_n}\|_{L^\infty(B_1)}
+
\eps_n\|\nabla v_{r_n}\|_{L^\infty(B_1)}
=
\|v\|_{L^\infty(B_{r_n})}
+
\eps\|\nabla v\|_{L^\infty(B_{r_n})}
\leq L.
\]
Furthermore,
\[
E_{\eps_n}(v_{r_n};B_1)=a_n.
\]
Choose
\[
\bar\eta\leq\eta_{\rm d}.
\]
Whenever \(a_n\leq\bar\eta\), Proposition~\ref{prop:forced-decay}
applies to the rescaled map and gives
\begin{equation}
\label{eq:forced-normalized-recurrence}
a_{n+1}
\leq
C\left\{
a_n^{3/2}
+\eps_n a_n
+\sqrt{g_na_n}
+\eps_n^2g_n
\right\}.
\end{equation}

Choose \(\bar\eta>0\) and \(\eps_0>0\) so small that
\[
C\sqrt{\bar\eta}\leq\frac1{16},
\qquad
C\sqrt{\eps_0}\leq\frac1{16}.
\]
If \(a_n\leq\bar\eta\), then
\[
Ca_n^{3/2}\leq\frac1{16}a_n,
\qquad
C\eps_n a_n\leq\frac1{16}a_n.
\]
By Young's inequality,
\[
C\sqrt{g_na_n}
\leq
\frac18a_n+C_1g_n.
\]
Since \(r_n\geq\sqrt\eps\),
\[
\eps_n^2=\frac{\eps^2}{r_n^2}\leq1,
\]
and therefore
\[
\eps_n^2g_n\leq g_n\leq\gamma_{\rm c}r_n.
\]
Thus
\begin{equation}
\label{eq:forced-linear-recurrence}
a_{n+1}
\leq
\frac14a_n+C_2\gamma_{\rm c}r_n
\end{equation}
whenever \(r_n\geq\sqrt\eps\) and \(a_n\leq\bar\eta\).

Choose
\[
\eta_{\rm c}\leq\bar\eta,
\qquad
C_2\gamma_{\rm c}\leq\frac{\bar\eta}{4}.
\]
Then \eqref{eq:forced-linear-recurrence} implies inductively that
\(a_n\leq\bar\eta\) for every index for which \(r_n\geq\sqrt\eps\).

Iterating \eqref{eq:forced-linear-recurrence}, we obtain
\[
a_n
\leq
4^{-n}a_0
+
C_2\gamma_{\rm c}
\sum_{j=0}^{n-1}4^{-(n-1-j)}2^{-j}.
\]
Since \(r_n=2^{-n}\),
\[
4^{-n}=r_n^2,
\qquad
\sum_{j=0}^{n-1}4^{-(n-1-j)}2^{-j}
\leq C r_n.
\]
Therefore
\begin{equation}
\label{eq:forced-dyadic-bound}
a_n\leq r_n^2a_0+C_3\gamma_{\rm c}r_n,
\qquad
E_n\leq a_0r_n^3+C_3\gamma_{\rm c}r_n^2.
\end{equation}

Choose \(N\) so that
\[
\sqrt\eps\leq r_N<2\sqrt\eps .
\]
Then \eqref{eq:forced-dyadic-bound} gives
\[
E_N
\leq
8a_0\eps^{3/2}
+
4C_3\gamma_{\rm c}\eps .
\]
Since \(a_0=E_\eps(v;B_1)\leq\eta_{\rm c}\), we can choose
\(\eps_0,\eta_{\rm c},\gamma_{\rm c}\) so small that
\[
8\eta_{\rm c}\sqrt{\eps_0}
\leq\frac{\eta_{\rm w}}2,
\qquad
4C_3\gamma_{\rm c}
\leq\frac{\eta_{\rm w}}2.
\]
Consequently,
\[
E_N\leq\eta_{\rm w}\eps.
\]

Rescale \(B_{r_N}\) to \(B_1\).  The rescaled parameter is
\[
\widetilde\eps:=\frac{\eps}{r_N}.
\]
Moreover,
\[
E_{\widetilde\eps}(v_{r_N};B_1)
=
\frac{E_N}{r_N}
\leq
\eta_{\rm w}\frac{\eps}{r_N}
=
\eta_{\rm w}\widetilde\eps.
\]
Since \(r_N\geq\sqrt\eps\),
\[
0<\widetilde\eps\leq\sqrt\eps\leq1\leq4.
\]
Lemma~\ref{lem:forced-weak-clearing} therefore applies and yields
\[
\dist(v(0),\Sigma)
=
\dist(v_{r_N}(0),\Sigma)
\leq\frac{\mu}{2}.
\]

It remains to consider \(\eps\geq\eps_0\).  Choose
\[
\eta_{\rm c}\leq\eta_{\rm w}\eps_0.
\]
Then
\[
E_\eps(v;B_1)
\leq\eta_{\rm c}
\leq\eta_{\rm w}\eps,
\]
so Lemma~\ref{lem:forced-weak-clearing} applies directly.
\end{proof}

\begin{rem}[Correspondence with Bethuel]

Lemma~\ref{lem:forced-dyadic-centre} is the forced counterpart of
\cite[Proposition~6.5]{Bethuel2025}.  Bethuel iterates the scaled
\(3/2\)-decay estimate until the weak clearing criterion of
\cite[Proposition~6.1]{Bethuel2025} becomes applicable.

The normalized recurrence \eqref{eq:forced-normalized-recurrence}
contains the two additional forcing contributions
\[
\sqrt{g_na_n}
\qquad\text{and}\qquad
\eps_n^2g_n.
\]
The definition
\[
g_n=r_n\int_{B_{r_n}}\frac{|F|^2}{\eps}\,\dd x
\]
makes both terms compatible with the summable error in
\eqref{eq:forced-linear-recurrence}.
\end{rem}

\subsection{Completion of the clearing-out theorem}

\begin{proof}[Proof of Theorem~\ref{thm:forced-clearing}]

Let \(\eta_{\rm c}\) and \(\gamma_{\rm c}\) be the constants from
Lemma~\ref{lem:forced-dyadic-centre}.  Decrease
\(\eta_{\rm f}\) and \(\gamma_{\rm f}\), if necessary, so that
\begin{equation}
\label{eq:forced-final-smallness-choice}
8\eta_{\rm f}\leq\eta_{\rm c},
\qquad
\frac{\gamma_{\rm f}}4\leq\gamma_{\rm c}.
\end{equation}

The estimate \eqref{eq:forced-unit-decay} is exactly the conclusion of
Proposition~\ref{prop:forced-decay}.  It remains to prove confinement and
the strong inner estimate.

\medskip
\noindent
\textbf{Step 1: Pointwise confinement.}

Fix \(x_0\in B_{3/4}\).  Then
\[
\dist(x_0,\partial B_1)\geq\frac14,
\]
hence
\[
B_{1/4}(x_0)\subset B_1.
\]

Define
\[
\widetilde v(y)
:=
v\left(x_0+\frac y4\right),
\qquad
\widetilde\eps:=4\eps,
\qquad
\widetilde F(y)
:=
\frac14F\left(x_0+\frac y4\right).
\]
Since
\[
\nabla_y\widetilde v(y)
=
\frac14\nabla_xv\left(x_0+\frac y4\right),
\qquad
\Delta_y\widetilde v(y)
=
\frac1{16}\Delta_xv\left(x_0+\frac y4\right),
\]
we have
\[
-\widetilde\eps\Delta_y\widetilde v
+\frac1{\widetilde\eps}DV(\widetilde v)
=
\widetilde F
\qquad\text{in }B_1.
\]

The energy scaling is
\[
\begin{aligned}
E_{\widetilde\eps}(\widetilde v;B_1)
&=
\int_{B_1}
\left(
\frac{\widetilde\eps}{2}|\nabla_y\widetilde v|^2
+\frac1{\widetilde\eps}V(\widetilde v)
\right)\dd y\\
&=
4\int_{B_{1/4}(x_0)}
\left(
\frac{\eps}{2}|\nabla_xv|^2
+\frac1\eps V(v)
\right)\dd x\\
&=
4E_\eps(v;B_{1/4}(x_0))\\
&\leq
4E_\eps(v;B_1)
\leq8\eta_{\rm f}
\leq\eta_{\rm c}.
\end{aligned}
\]

For the forcing term, using \(\dd y=16\,\dd x\),
\[
\begin{aligned}
\int_{B_1}
\frac{|\widetilde F(y)|^2}{\widetilde\eps}\,\dd y
&=
\int_{B_1}
\frac{\frac1{16}|F(x_0+y/4)|^2}{4\eps}\,\dd y\\
&=
\frac14
\int_{B_{1/4}(x_0)}
\frac{|F(x)|^2}{\eps}\,\dd x\\
&\leq
\frac{\gamma_{\rm f}}4
\leq\gamma_{\rm c}.
\end{aligned}
\]

The uniform bound is invariant under this scaling:
\[
\begin{aligned}
\|\widetilde v\|_{L^\infty(B_1)}
+
\widetilde\eps
\|\nabla_y\widetilde v\|_{L^\infty(B_1)}
&=
\|v\|_{L^\infty(B_{1/4}(x_0))}
+
\eps\|\nabla_xv\|_{L^\infty(B_{1/4}(x_0))}\\
&\leq L.
\end{aligned}
\]
Also,
\[
0<\widetilde\eps=4\eps\leq4.
\]
Thus all assumptions of Lemma~\ref{lem:forced-dyadic-centre} hold for
\((\widetilde v,\widetilde\eps,\widetilde F)\).  We obtain
\[
\dist(\widetilde v(0),\Sigma)\leq\frac{\mu}{2}.
\]
Since \(\widetilde v(0)=v(x_0)\),
\[
\dist(v(x_0),\Sigma)\leq\frac{\mu}{2}.
\]

As \(x_0\in B_{3/4}\) was arbitrary,
\[
v(B_{3/4})
\subset
\bigcup_{a=1}^q\overline{B_{\mu/2}(\sigma_a)}.
\]
The closed balls are pairwise disjoint, while \(v(B_{3/4})\) is
connected because \(B_{3/4}\) is connected and \(v\) is continuous.
Therefore one \(\sigma\in\Sigma\) satisfies
\[
|v-\sigma|\leq\frac{\mu}{2}
\qquad\text{on }B_{3/4}.
\]
This proves \eqref{eq:forced-confinement}.

\medskip
\noindent
\textbf{Step 2: Caccioppoli estimate after confinement.}

Set
\[
w:=v-\sigma.
\]
By confinement and \eqref{eq:forced-well-coercivity},
\[
c_V|w|^2
\leq V(v)\leq C_V|w|^2,
\qquad
DV(v)\cdot w\geq c_V|w|^2
\qquad\text{on }B_{3/4}.
\]

Choose
\[
\chi\in C_c^\infty(B_{3/4})
\]
such that
\[
0\leq\chi\leq1,
\qquad
\chi\equiv1\quad\text{on }B_{5/8},
\qquad
|\nabla\chi|\leq C.
\]

Multiply the equation by \(\chi^2w\) and integrate over \(B_{3/4}\).
Since \(\nabla w=\nabla v\), integration by parts gives
\[
\begin{aligned}
&\int_{B_{3/4}}
\chi^2
\left(
\eps|\nabla v|^2
+\frac1\eps DV(v)\cdot w
\right)\dd x\\
&\qquad=
-2\eps
\int_{B_{3/4}}
\chi\,\nabla v:(w\otimes\nabla\chi)\,\dd x
+
\int_{B_{3/4}}
\chi^2F\cdot w\,\dd x .
\end{aligned}
\]

For the cutoff term,
\[
\begin{aligned}
2\eps\chi|\nabla v|\,|w|\,|\nabla\chi|
&\leq
\frac{\eps}{4}\chi^2|\nabla v|^2
+
C\eps|\nabla\chi|^2|w|^2 .
\end{aligned}
\]
For the forcing term, Young's inequality gives
\[
\chi^2|F|\,|w|
\leq
\frac{c_V}{2\eps}\chi^2|w|^2
+
\frac{\eps}{2c_V}\chi^2|F|^2 .
\]
Using
\[
DV(v)\cdot w\geq c_V|w|^2,
\]
we absorb the terms
\[
\frac{\eps}{4}\chi^2|\nabla v|^2
\qquad\text{and}\qquad
\frac{c_V}{2\eps}\chi^2|w|^2
\]
into the left-hand side.  Hence
\[
\int_{B_{3/4}}
\chi^2
\left(
\eps|\nabla v|^2+\frac1\eps|w|^2
\right)\dd x
\leq
C\eps
\int_{B_{3/4}}|\nabla\chi|^2|w|^2\,\dd x
+
C\eps\int_{B_{3/4}}|F|^2\,\dd x .
\]

Since \(|w|^2\leq c_V^{-1}V(v)\),
\[
\begin{aligned}
\eps\int_{B_{3/4}}|\nabla\chi|^2|w|^2\,\dd x
&\leq
C\eps\int_{B_1}V(v)\,\dd x\\
&=
C\eps^2
\int_{B_1}\frac{V(v)}{\eps}\,\dd x\\
&\leq
C\eps^2E_\eps(v;B_1).
\end{aligned}
\]
Furthermore,
\[
\eps\int_{B_{3/4}}|F|^2\,\dd x
=
\eps^2
\int_{B_{3/4}}\frac{|F|^2}{\eps}\,\dd x
\leq
\eps^2
\int_{B_1}\frac{|F|^2}{\eps}\,\dd x .
\]
Thus
\[
\int_{B_{3/4}}
\chi^2
\left(
\eps|\nabla v|^2+\frac1\eps|w|^2
\right)\dd x
\leq
C\eps^2
\left\{
E_\eps(v;B_1)
+
\int_{B_1}\frac{|F|^2}{\eps}\,\dd x
\right\}.
\]

Finally, because \(\chi\equiv1\) on \(B_{5/8}\) and
\(V(v)\leq C_V|w|^2\),
\[
\begin{aligned}
E_\eps(v;B_{5/8})
&=
\int_{B_{5/8}}
\left(
\frac{\eps}{2}|\nabla v|^2
+\frac1\eps V(v)
\right)\dd x\\
&\leq
C
\int_{B_{3/4}}
\chi^2
\left(
\eps|\nabla v|^2+\frac1\eps|w|^2
\right)\dd x\\
&\leq
C_{\rm f}\eps^2
\left\{
E_\eps(v;B_1)
+
\int_{B_1}\frac{|F|^2}{\eps}\,\dd x
\right\}.
\end{aligned}
\]
This proves \eqref{eq:forced-strong-inner}.
\end{proof}

\begin{rem}[Final comparison with Bethuel]

The proof follows the architecture of Sections~5 and 6 of
\cite{Bethuel2025}:

\begin{enumerate}
\item Proposition~\ref{prop:forced-decay} is the forced version of
\cite[Proposition~1.12]{Bethuel2025};

\item the dyadic iteration in
Lemma~\ref{lem:forced-dyadic-centre} corresponds to
\cite[Proposition~6.5]{Bethuel2025};

\item the passage from pointwise clearing to confinement in one well
corresponds to the confinement part of
\cite[Theorem~1.11]{Bethuel2025};

\item the final Caccioppoli estimate is the forced counterpart of the
near-well estimate in \cite[Proposition~6.8]{Bethuel2025}.
\end{enumerate}

The geometric and elliptic core is therefore the same as in Bethuel's
argument.  The only genuinely new terms are those produced by \(F\),
all of which are controlled by the scale-invariant quantity
\[
\int\frac{|F|^2}{\eps}.
\]
\end{rem}

\section{A forced Bethuel-type theorem}
\label{sec:forced-bethuel}

The purpose of this section is to pass from the quantitative clearing-out
theorem of Section~\ref{sec:forced-clearing} to a measure-theoretic
compactness result for forced critical points.  The conclusions are the
forced analogues of the rectifiability, absolute-continuity, and discrepancy
statements in \cite[Theorems~1.2, 1.8, and 1.9]{Bethuel2025}.  The proof
given here differs from Bethuel's proof at the measure-theoretic stage:
once clearing-out has supplied a positive one-dimensional lower density, we
use the structure theorem of De Philippis and Rindler for
divergence-constrained measures and Allard's rectifiability theorem.

Throughout this section,
\[
 e_\eps(v)
 =
 \frac{\eps}{2}|\nabla v|^2+\frac1\eps V(v),
 \qquad
 E_\eps(v;A)
 =
 \int_A e_\eps(v)\,\dd x,
\]
as in Section~\ref{sec:forced-clearing}.

\subsection{Energy--potential comparison}

We first record a local estimate comparing the full energy with the
potential energy.  Unlike the clearing theorem, this estimate does not
require any smallness assumption.

\begin{lem}[Energy--potential comparison]
\label{lem:forced-energy-potential}
Let \(0<\eps\leq1\), and let
\(v\in C^2(B_1;\R^k)\) and \(F\in L^2(B_1;\R^k)\) satisfy
\[
 -\eps\Delta v+\frac1\eps DV(v)=F
 \qquad\text{in }B_1
\]
and
\[
 \|v\|_{L^\infty(B_1)}
 +\eps\|\nabla v\|_{L^\infty(B_1)}
 \leq L.
\]
Then
\begin{equation}\label{eq:forced-energy-potential}
\begin{aligned}
 E_\eps\left(v;B_{\frac12}\right)
 \leq C(V,L)\bigg\{&
 \int_{B_{\frac34}}\frac{V(v)}{\eps}\,\dd x
 +\eps E_\eps\left(
 v;B_{\frac34}\setminus B_{\frac12}
 \right)\\
 &+\eps^2
 \int_{B_1}\frac{|F|^2}{\eps}\,\dd x
 \bigg\}.
\end{aligned}
\end{equation}
\end{lem}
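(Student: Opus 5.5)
The plan is to deduce the lemma from estimate \eqref{eq:forced-full-by-potential} of Lemma~\ref{lem:forced-target}, combined with a good-radius selection. That lemma gives, for every \(\rho\in[9/16,4/5]\),
\[
 E_\eps(v;B_\rho)
 \leq C\Big\{\int_{B_\rho}\frac{V(v)}{\eps}\,\dd x
 +\eps\int_{\partial B_\rho}e_\eps(v)\,\dd\cH^1
 +\eps^2\int_{B_\rho}\frac{|F|^2}{\eps}\,\dd x\Big\}.
\]
Its hypotheses are only the equation, \(0<\eps\leq1\) and the uniform bound \eqref{eq:forced-uniform-bound}; no smallness of the energy is used. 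This is consistent with the claim that the comparison needs no smallness.

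First I would choose the radius. By the averaging assertion \eqref{eq:forced-circle-average} of Lemma~\ref{lem:forced-circle}, which is just the coarea formula and holds for any interval, there is \(\rho\in[9/16,5/8]\subset[1/2,3/4]\) with
\[
 \int_{\partial B_\rho}e_\eps(v)\,\dd\cH^1
 \leq 16\,E_\eps\bigl(v;B_{5/8}\setminus B_{9/16}\bigr)
 \leq 16\,E_\eps\bigl(v;B_{3/4}\setminus B_{1/2}\bigr).
\]
The point of this choice is that the shell lies outside \(B_{1/2}\), so the boundary term is controlled by the annular energy appearing in \eqref{eq:forced-energy-potential} rather than by the full energy.

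Next, since \(B_{1/2}\subset B_\rho\subset B_{3/4}\subset B_1\), I would apply \eqref{eq:forced-full-by-potential} at this \(\rho\). Monotonicity of the integrals in the domain then bounds \(\int_{B_\rho}V/\eps\) by \(\int_{B_{3/4}}V/\eps\) and the forcing term by \(\eps^2\int_{B_1}|F|^2/\eps\). The boundary term becomes \(16\eps E_\eps(v;B_{3/4}\setminus B_{1/2})\), which yields \eqref{eq:forced-energy-potential} with \(C=C(V,L)\).

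The only delicate point is to check that Lemma~\ref{lem:forced-target} really applies without smallness. I would verify this as follows. The choice of the regular level \(s_*\) and the absorption of the \(\delta\)-term use only \eqref{eq:forced-well-coercivity}, \eqref{eq:forced-far-energy} and Young's inequality. The boundary term on \(\Pi_a(s_*,\rho)\) is handled pointwise by \eqref{eq:forced-boundary-term}. Regularity \(v\in C^1(\overline{B_\rho})\) holds because \(\rho<1\). The refined estimate \eqref{eq:forced-refined-well}, which does need a boundary well condition, is not used. If a constant depended on \(\rho\), restricting to the compact range \([9/16,5/8]\) keeps it uniform.
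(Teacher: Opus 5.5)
Your proposal is correct and matches the paper's own proof. You choose a good radius $\rho\in[9/16,5/8]$ by the coarea formula so that $\int_{\partial B_\rho}e_\eps(v)\,\dd\cH^1\le 16E_\eps(v;B_{5/8}\setminus B_{9/16})$, apply \eqref{eq:forced-full-by-potential} on $B_\rho$, and then enlarge the domains of the remaining integrals to $B_{3/4}$, $B_{3/4}\setminus B_{1/2}$ and $B_1$. Your extra check is also accurate: Lemma~\ref{lem:forced-target} needs only the equation, $\eps\le1$ and the uniform bound, with no energy smallness.
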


\begin{proof}
By the coarea formula,
\[
 \int_{\frac{9}{16}}^{\frac58}
 \left(
 \int_{\partial B_s}e_\eps(v)\,\dd\cH^1
 \right)\dd s
 =
 E_\eps\left(
 v;B_{\frac58}\setminus B_{\frac{9}{16}}
 \right).
\]

Hence there exists
\(\rho\in[\frac{9}{16},\frac58]\) such that
\begin{equation}\label{eq:energy-potential-good-radius}
 \int_{\partial B_\rho}e_\eps(v)\,\dd\cH^1
 \leq
 16E_\eps\left(
 v;B_{\frac58}\setminus B_{\frac{9}{16}}
 \right).
\end{equation}
Because \(B_{1/2}\subset B_\rho\), estimate
\eqref{eq:forced-full-by-potential}, applied on \(B_\rho\), gives
\[
\begin{aligned}
 E_\eps\left(v;B_{\frac12}\right)
 \leq C(V,L)\bigg\{&
 \int_{B_\rho}\frac{V(v)}{\eps}\,\dd x
 +\eps\int_{\partial B_\rho}e_\eps(v)\,\dd\cH^1\\
 &+\eps^2
 \int_{B_\rho}\frac{|F|^2}{\eps}\,\dd x
 \bigg\}.
\end{aligned}
\]
Now
\[
 B_\rho\subset B_{\frac34},
 \qquad
 B_{\frac58}\setminus B_{\frac{9}{16}}
 \subset
 B_{\frac34}\setminus B_{\frac12},
\]
so \eqref{eq:energy-potential-good-radius} yields
\[
 \eps\int_{\partial B_\rho}e_\eps(v)\,\dd\cH^1
 \leq
 16\eps E_\eps\left(
 v;B_{\frac34}\setminus B_{\frac12}
 \right).
\]
The forcing integral over \(B_\rho\) is bounded by the corresponding
integral over \(B_1\).  Substitution proves
\eqref{eq:forced-energy-potential}.
\end{proof}

\begin{rem}[Correspondence with Bethuel]
When \(F=0\), Lemma~\ref{lem:forced-energy-potential} is the local
energy--potential comparison established in
\cite[Proposition~4.11]{Bethuel2025}; see also the bounded-energy variant
\cite[Proposition~4.13]{Bethuel2025}.  The additional term
\[
 \eps^2\int_{B_1}\frac{|F|^2}{\eps}\,\dd x
\]
comes from the forced near-well estimate
\eqref{eq:forced-full-by-potential}.
\end{rem}

We shall use the following rescaled form.  Let \(B_r(x_0)\) be compactly
contained in the domain of \(v\).  Applying
Lemma~\ref{lem:forced-energy-potential} to
\[
 v_r(y):=v(x_0+ry),
 \qquad
 \eps_r:=\frac{\eps}{r},
 \qquad
 F_r(y):=rF(x_0+ry),
\]
and using
\[
 E_{\eps_r}(v_r;A)
 =
 \frac1r E_\eps(v;x_0+rA),
\]
as well as
\[
 \int_{B_1}\frac{|F_r|^2}{\eps_r}\,\dd y
 =
 r\int_{B_r(x_0)}\frac{|F|^2}{\eps}\,\dd x,
\]
we obtain
\begin{equation}\label{eq:forced-energy-potential-scaled}
\begin{aligned}
 E_\eps\left(v;B_{\frac r2}(x_0)\right)
 \leq C(V,L)\bigg\{&
 \int_{B_{\frac{3r}{4}}(x_0)}
 \frac{V(v)}{\eps}\,\dd x\\
 &+\frac{\eps}{r}
 E_\eps\left(
 v;
 B_{\frac{3r}{4}}(x_0)\setminus B_{\frac r2}(x_0)
 \right)\\
 &+\eps^2
 \int_{B_r(x_0)}\frac{|F|^2}{\eps}\,\dd x
 \bigg\}.
\end{aligned}
\end{equation}

\subsection{A rank-one stress lemma}

We next isolate the measure-theoretic mechanism that converts the limiting
stress into a rectifiable varifold.  This is the only point in the argument
where the fact that the spatial dimension is two is essential.

\begin{thm}[De Philippis--Rindler structure theorem for $\mathcal A$-free measures]
\label{thm:dpr-structure}
Let $\Omega\subset\mathbb R^d$ be open and let
\[
\mathcal A=\sum_{|\alpha|\le k}A_\alpha\partial^\alpha
\]
be a linear constant-coefficient differential operator. Let
\[
\mu\in\mathcal M(\Omega;\mathbb R^N),
\qquad
\mathcal A\mu=0
\quad\text{in }\mathcal D'(\Omega).
\]
Write $\mu=\mu^a+\mu^s$ for the Lebesgue decomposition with respect to
$\mathcal L^d$. Define the principal symbol and wave cone by
\[
\mathbb A^k(\xi)
:=
\sum_{|\alpha|=k}A_\alpha\xi^\alpha,
\qquad
\Lambda_{\mathcal A}
:=
\bigcup_{\xi\in\mathbb R^d\setminus\{0\}}
\ker\mathbb A^k(\xi).
\]
Then
\[
\frac{\dd\mu^s}{\dd|\mu^s|}(x)
\in\Lambda_{\mathcal A}
\qquad
\text{for }|\mu^s|\text{-almost every }x\in\Omega.
\]
The statement is local: it applies on every relatively compact open subset
where $\mu$ has finite mass.
\end{thm}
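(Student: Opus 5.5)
We will not reprove this result; in the paper it is invoked from De Philippis--Rindler. If a self-contained argument were required, we would follow their blow-up strategy, sketched here.

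We argue by contradiction. Suppose the polar $P(x):=\frac{\dd\mu}{\dd|\mu|}(x)$ lies outside $\Lambda_{\mathcal A}$ on a set of positive $|\mu^s|$-measure. By Lebesgue differentiation for $|\mu|$ and the Besicovitch theorem, we can pick a point $x_0$ of this set with the following properties:
\begin{itemize}
\item $x_0$ is a $|\mu|$-Lebesgue point of $P$, so that $\frac{1}{|\mu|(B_r(x_0))}\int_{B_r(x_0)}|P-P(x_0)|\,\dd|\mu|\to0$;
\item $|\mu^a|(B_r(x_0))/|\mu|(B_r(x_0))\to0$;
\item $|\mu|(B_r(x_0))/r^d\to\infty$.
\end{itemize}
Set $P_0:=P(x_0)\notin\Lambda_{\mathcal A}$. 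This means $\mathbb A^k(\xi)P_0\neq0$ for every $\xi\neq0$, so the scalar operator $w\mapsto\mathcal A(P_0w)$ is elliptic of order $k$.

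Next we blow up. Define the normalized measures $\mu_r:=(T_{x_0,r})_\#\mu/|\mu|(B_r(x_0))$ on $B_1$ and write $\mu_r=P_0|\mu_r|+R_r$. The Lebesgue-point property gives $|R_r|(B_1)\to0$, and $|\mu_r|$ converges, along a subsequence, to a tangent measure $\tau\neq0$. Rescaling the equation, the lower-order terms of $\mathcal A$ acquire positive powers of $r$ and disappear. We obtain
\[
\mathbb A^k(\nabla)\bigl(P_0|\mu_r|\bigr)=-\mathcal A_r R_r+o(1)
\]
in $W^{-k,1}$-type norms, where $\mathcal A_r$ denotes the rescaled operator.

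We then localize with a cutoff and invert the elliptic symbol by a Fourier multiplier. This writes $\chi|\mu_r|=T_r+S_r$, where:
\begin{itemize}
\item $T_r$ comes from the commutator terms and gains $k$ derivatives on a measure, so it is precompact in $L^1_{\rm loc}$ (via compactness of $W^{-1,p}$ embeddings for $p<d/(d-1)$, iterated);
\item $S_r$ is controlled by $R_r$ composed with an order-zero operator.
\end{itemize}

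The main obstacle is exactly the previous step. Order-zero multipliers are not bounded on $L^1$ or on measures, so the smallness of $|R_r|$ does not directly give $L^1$ smallness of $S_r$. Our first attempt would be a weak-$L^1$ / truncation argument. Calderón--Zygmund operators map measures to weak-$L^1$ with norm bounded by the mass, so $S_r$ is small in weak-$L^1$, and $T_r$ is equi-integrable. Hence the limit $\tau$ inherits equi-integrability up to a set of small measure. This forces $\tau\ll\mathcal L^d$ with controlled density, as the limit of $\chi|\mu_r|$.

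This contradicts the singular blow-up. The normalization $|\mu|(B_r(x_0))\gg r^d$ is incompatible with $|\mu_r|$ being uniformly integrable with respect to the rescaled Lebesgue measure. To reach the contradiction one needs the quantitative version: a careful choice of radii through a density-ratio argument, as in De Philippis--Rindler, so that the mass of $|\mu_r|$ cannot escape into a Lebesgue-null concentration set. Finally, locality of the statement is immediate, since the argument only uses finite mass on a neighborhood of $x_0$.
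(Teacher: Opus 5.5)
Deferring to De Philippis--Rindler is what the paper does: it quotes the theorem without proof. So, as a treatment of this statement, your proposal agrees with the paper. Your self-contained sketch, however, does not close, and it breaks at the decisive step.

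First, showing that the tangent measure $\tau$ is absolutely continuous gives no contradiction. By Preiss's example, there are purely singular measures all of whose tangent measures, at almost every point, are multiples of $\mathcal L^d$. This is fully compatible with $|\mu|(B_r(x_0))/r^d\to\infty$.

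Second, smallness of $S_r$ in $L^{1,\infty}$ does not give equi-integrability. For instance, $h_r:=(\log(1/r))^{-1}|x|^{-d}\mathbf 1_{\{r<|x|<1\}}$ is positive, satisfies $\|h_r\|_{L^{1,\infty}}\to0$, and converges to a multiple of $\delta_0$.

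Third, there is a technical point. An order-zero multiplier applied to a singular measure is in general not a weak-$L^1$ function: the symbol $\xi_2^2/|\xi|^2$ applied to $\phi(x_1)\,\mathcal H^1\llcorner\{x_2=0\}$ returns this measure plus a bounded function. So the weak-$(1,1)$ bound can only be used after mollifying the blow-ups at a scale $\delta_r\downarrow0$ chosen depending on $r$.

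The missing idea is to use positivity of $\chi|\mu_r|$ along the sequence rather than in the limit.
\begin{enumerate}
\item After mollification, $S_r=\chi|\mu_r|-T_r$ satisfies $S_r^-\le|T_r|$, and $|T_r|$ is equi-integrable.
\item $S_r\to0$ in measure, by the weak-$(1,1)$ bound.
\item $S_r\to0$ in $\mathcal D'$, since it is a fixed order-zero multiplier applied to measures of vanishing mass.
\item Vitali's theorem gives $\int S_r^-\psi\to0$ for $0\le\psi\in C^\infty_c$. Hence $\int S_r^+\psi=\int S_r\psi+\int S_r^-\psi\to0$.
\end{enumerate}
So $S_r\to0$ in $L^1_{\mathrm{loc}}$, and the mollified blow-ups converge strongly in $L^1_{\mathrm{loc}}$.

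The contradiction is then with the singularity of each individual blow-up, not of the limit. Choose radii with $|\mu_r|(B_{1/2})\ge c>0$; this holds along a sequence $r\downarrow0$ at $|\mu|$-a.e.\ point. The singular part of $\chi|\mu_r|$ lives in an open set $O_r$ with $\mathcal L^d(O_r)\to0$. A fine enough mollification keeps mass at least $c/3$ in $O_r$, which is incompatible with strong $L^1_{\mathrm{loc}}$ convergence.
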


The theorem is quoted without proof from
De Philippis--Rindler~\cite[Theorem~1.1]{DePhilippisRindler2016}.

For the row-wise divergence operator on $\mathbb R^{2\times2}$,
\[
(\operatorname{div}M)_i:=\partial_jM_{ij},
\]
the symbol is
\[
\mathbb A(\xi)M=M\xi.
\]
Consequently,
\[
\Lambda_{\operatorname{div}}
=
\left\{
M\in\mathbb R^{2\times2}:
M\xi=0
\text{ for some }\xi\in\mathbb S^1
\right\}.
\]
In dimension two, every non-zero element of
$\Lambda_{\operatorname{div}}$ has rank one.

\begin{lem}[Rank-one stress and rectifiability]
\label{lem:rank-one-stress}
Let $U\subset\mathbb R^2$ be open. Suppose that $T$ is a locally finite
symmetric $2\times2$ matrix-valued Radon measure on $U$, and that $q$ is a
locally finite $\mathbb R^2$-valued Radon measure on $U$. Let $\zeta$ and
$\nu$ be non-negative Radon measures on $U$ satisfying
\[
\operatorname{div}T=q,
\qquad
\operatorname{tr}T=2\zeta,
\qquad
T\perp\mathcal L^2,
\qquad
\zeta\le\nu,
\qquad
\nu\ll\zeta.
\]
Assume also that
\[
\Theta_*^1(\nu,x)
:=
\liminf_{r\downarrow0}
\frac{\nu(B_r(x))}{2r}
>0
\qquad
\text{for }\nu\text{-almost every }x.
\]
Then there exist a countably $1$-rectifiable set $S\subset U$, a density
$\Theta>0$ defined $\mathcal H^1$-almost everywhere on $S$, and a measurable
unit tangent field $\tau:S\to\mathbb S^1$ such that
\[
\zeta=\Theta\,\mathcal H^1\llcorner S,
\qquad
T=2\Theta\,\tau\otimes\tau\,
\mathcal H^1\llcorner S.
\]
Moreover,
\[
\operatorname{span}\tau(x)=T_xS
\qquad
\text{for }\mathcal H^1\text{-almost every }x\in S.
\]
\end{lem}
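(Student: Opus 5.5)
We plan to combine Theorem~\ref{thm:dpr-structure}, applied to the divergence operator, with Allard's rectifiability theorem for varifolds with locally bounded first variation.

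\textbf{Step 1: rank-one polar.} Since $T\perp\mathcal L^2$, we have $T=T^s$. The equation $\operatorname{div}T=q$ is not homogeneous, so we first remove $q$. We work locally on a ball $B\Subset U$ and choose a matrix measure $R$ with $\operatorname{div}R=q$ that is absolutely continuous (in fact $L^p_{\rm loc}$ for $p<2$). For instance, we take the Newtonian potential $w=\Phi*q$ and set $R=\nabla w$ row-wise. Since $\nabla\Phi\in L^p_{\rm loc}$ for $p<2$, convolution of a finite measure with $\nabla\Phi$ lies in $L^p_{\rm loc}$, so $R\ll\mathcal L^2$.

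Then $\mathcal T:=T-R$ satisfies $\operatorname{div}\mathcal T=0$ in $B$, and its singular part is exactly $T$. Theorem~\ref{thm:dpr-structure} gives $\frac{\dd T}{\dd|T|}\in\Lambda_{\operatorname{div}}$ for $|T|$-a.e.\ $x$. Because $T$ is symmetric and $M\xi=0$ for some $\xi\ne0$, the polar is a symmetric rank-one matrix $\pm\tau\otimes\tau$. The trace identity $\operatorname{tr}T=2\zeta\ge0$ forces the sign $+$. Hence $T=\tau\otimes\tau\,|T|$ with $|T|=\operatorname{tr}T=2\zeta$, i.e.\ $T=2\tau\otimes\tau\,\zeta$ for some measurable unit field $\tau$.

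\textbf{Step 2: varifold and first variation.} We define the $1$-varifold $\mathbf V:=\zeta\otimes\delta_{\operatorname{span}\tau(x)}$ on $U\times G(2,1)$. For $X\in C^1_c(U;\R^2)$,
\[
\delta\mathbf V(X)=\int (\tau\otimes\tau):\nabla X\,\dd\zeta=\tfrac12\int \nabla X:\dd T=-\tfrac12\int X\cdot \dd q ,
\]
using symmetry of $T$ and $\operatorname{div}T=q$. Thus $\|\delta\mathbf V\|\le\frac12|q|$ is locally finite.

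\textbf{Step 3: density and Allard.} From $\zeta\le\nu$ and $\nu\ll\zeta$, the two measures share null sets. Since $\zeta\le\nu$, the density hypothesis must be transferred in the opposite direction, so we expect this to be the main obstacle. We argue by differentiation: $\nu=g\zeta$ with $g\ge1$ finite $\zeta$-a.e. At $\zeta$-Lebesgue points of $g$ (Besicovitch differentiation in $\R^2$), $\nu(B_r(x))/\zeta(B_r(x))\to g(x)<\infty$. Therefore $\Theta^1_*(\zeta,x)\ge\Theta^1_*(\nu,x)/g(x)>0$ for $\zeta$-a.e.\ $x$. Allard's rectifiability theorem (locally bounded first variation plus positive lower density a.e.) then yields that $\mathbf V$ is rectifiable: $\zeta=\Theta\,\mathcal H^1\llcorner S$ with $S$ countably $1$-rectifiable and $\Theta>0$. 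Moreover, the varifold's tangent plane agrees with the approximate tangent $T_xS$ a.e.

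\textbf{Step 4: identifying the plane.} Rectifiability says the Grassmannian part of $\mathbf V$ is $\delta_{T_xS}$ for $\zeta$-a.e.\ $x$, and by construction it is $\delta_{\operatorname{span}\tau}$. Hence $\operatorname{span}\tau=T_xS$ $\mathcal H^1$-a.e.\ on $S$, and $T=2\Theta\,\tau\otimes\tau\,\mathcal H^1\llcorner S$.

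A subtle point is that Allard's theorem is usually stated for varifolds whose plane distribution is arbitrary. The conclusion that the varifold coincides with the rectifiable varifold $\mathbf v(S,\Theta)$ includes the identification of the plane, so Step 4 is immediate. Removing $q$ via the Newtonian potential is routine.
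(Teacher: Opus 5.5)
Your proposal is correct and follows the paper's proof essentially step by step. Both arguments subtract an $L^p_{\rm loc}$ Newtonian-potential correction $R$ so that $T-R$ is divergence-free with singular part $T$, then use De Philippis--Rindler plus the trace sign to get $T=2\tau\otimes\tau\,\zeta$, compute $\delta\mathbf V(X)=-\tfrac12\int X\cdot dq$, transfer the lower density from $\nu$ to $\zeta$ via $d\nu/d\zeta$, and conclude with Allard. The only cosmetic difference is in the localization: you restrict $q$ to a ball, whereas the paper multiplies $T$ by a cutoff $\chi$ and solves $\operatorname{div}R=\chi q+T\nabla\chi$. You should make that restriction explicit before convolving with $\Phi$, since $q$ is only locally finite.
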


\begin{proof}
Fix $W\Subset U$. Choose an open set $U'$ such that
\[
W\Subset U'\Subset U,
\]
and choose $\chi\in C_c^\infty(U)$ satisfying
$\chi\equiv1$ on $U'$.

Set
\[
T^\chi:=\chi T.
\]
Then
\[
q^\chi:=\operatorname{div}T^\chi
=\chi q+T\nabla\chi
\]
is a compactly supported vector-valued Radon measure in $U$.

Extend $T^\chi$ and $q^\chi$ by zero to $\mathbb R^2$. For each row define
\[
R_{i\boldsymbol\cdot}
:=
\nabla\Delta^{-1}q_i^\chi.
\]
Then, in the sense of distributions,
\[
\partial_jR_{ij}=q_i^\chi.
\]
Since the gradient of the two-dimensional Newtonian potential is a
Riesz potential of order one,
\[
R\in L^p_{\mathrm{loc}}(\mathbb R^2)
\qquad\text{for every }1\le p<2.
\]

Define
\[
M:=T^\chi-R\mathcal L^2.
\]
Then $M$ is locally finite and
\[
\operatorname{div}M=0.
\]
Because $T\perp\mathcal L^2$ and
$R\mathcal L^2\ll\mathcal L^2$, the singular part of $M$ is
\[
M^s=T^\chi.
\]

Restrict $M$ to $U'$. It is a finite Radon measure on $U'$ and is
divergence-free there. Hence Theorem~\ref{thm:dpr-structure}, applied with
$\mathcal A=\operatorname{div}$ acting row by row, gives
\[
\frac{\dd T}{\dd|T|}(x)
\in\Lambda_{\operatorname{div}}
\qquad
\text{for }|T|\text{-almost every }x\in U',
\]
because $\chi\equiv1$ on $U'$.

Thus the polar of $T$ has rank one almost everywhere. Since $T$ is symmetric,
there exist a scalar $\lambda$ and a measurable unit vector field $\tau$ such
that
\[
\frac{\dd T}{\dd|T|}
=
\lambda\,\tau\otimes\tau.
\]
Moreover,
\[
\operatorname{tr}T=2\zeta\ge0.
\]
A non-zero symmetric rank-one matrix has non-zero trace, and therefore
$\lambda>0$ almost everywhere on the support of $T$. Consequently,
\[
T=2\tau\otimes\tau\,\zeta.
\]
Since $W\Subset U$ was arbitrary, this identity holds throughout $U$.

Define the $1$-varifold
\[
\mathbf V(\varphi)
:=
\int_U
\varphi\bigl(x,\operatorname{span}\tau(x)\bigr)\,\dd\zeta(x),
\qquad
\varphi\in C_c(U\times G(2,1)).
\]
Its weight measure is $\|\mathbf V\|=\zeta$. For every
$X\in C_c^1(U;\mathbb R^2)$,
\[
\begin{aligned}
\delta\mathbf V(X)
&=
\int_U\tau\otimes\tau:DX\,\dd\zeta\\
&=
\frac12\int_U T:DX\\
&=
-\frac12\int_U X\cdot\dd q.
\end{aligned}
\]
Hence $\mathbf V$ has locally bounded first variation.

It remains to verify the positive lower $1$-density condition for $\zeta$.
Since $\nu\ll\zeta$, let
\[
f:=\frac{\dd\nu}{\dd\zeta}.
\]
The inequality $\zeta\le\nu$ implies
\[
f\ge1
\qquad
\text{for }\zeta\text{-almost every }x.
\]
Thus $\nu$ and $\zeta$ are mutually absolutely continuous. At
$\zeta$-almost every differentiation point of $f$,
\[
\frac{\nu(B_r(x))}{\zeta(B_r(x))}
\longrightarrow f(x)<\infty.
\]
Therefore, for sufficiently small $r$,
\[
\frac{\zeta(B_r(x))}{2r}
\ge
\frac1{2f(x)}
\frac{\nu(B_r(x))}{2r}.
\]
Using the assumed positive lower density of $\nu$, we obtain
\[
\Theta_*^1(\zeta,x)
:=
\liminf_{r\downarrow0}
\frac{\zeta(B_r(x))}{2r}
>0
\qquad
\text{for }\zeta\text{-almost every }x.
\]

Allard's rectifiability theorem
\cite[Theorem~5.5]{Allard1972} now applies locally to $\mathbf V$: its first
variation is locally bounded and its weight measure has positive lower
$1$-density almost everywhere. Hence there exist a countably
$1$-rectifiable set $S\subset U$ and a density $\Theta>0$ such that
\[
\zeta=\Theta\,\mathcal H^1\llcorner S.
\]
Furthermore, the plane field of $\mathbf V$ agrees with the approximate
tangent line:
\[
\operatorname{span}\tau(x)=T_xS
\qquad
\text{for }\mathcal H^1\text{-almost every }x\in S.
\]
Substituting this representation of $\zeta$ into
\[
T=2\tau\otimes\tau\,\zeta
\]
gives
\[
T=2\Theta\,\tau\otimes\tau\,
\mathcal H^1\llcorner S.
\]
\end{proof}

\begin{rem}[Why the argument is two-dimensional]
For the row-divergence operator in \(\R^d\), the wave cone consists of
matrices having a non-trivial kernel.  In dimension \(d=2\), this forces a
non-zero matrix to have rank one.  In higher dimensions it gives only
\(\operatorname{rank}A\leq d-1\), which is insufficient to identify a
one-dimensional tangent projection.  This is the precise point where the
planar assumption enters the measure-theoretic part of the proof.
\end{rem}

\begin{rem}[Relation with Bethuel's elliptic argument]
\label{rem:relation-with-bethuel}
Lemma~\ref{lem:rank-one-stress} isolates the geometric compactness step that
is implicit in Bethuel's proof of
\cite[Theorem~1.2]{Bethuel2025}.  In the present formulation, the
modified Cauchy--Riemann identities appearing in
\cite[Lemma~1.19, (1.71)--(1.72)]{Bethuel2025} are encoded by the stress
relations
\[
 \operatorname{div}T=q,
 \qquad
 \operatorname{tr}T=2\zeta.
\]
The De Philippis--Rindler structure theorem first yields the rank-one
direction of the singular stress, while Allard's rectifiability theorem
identifies this direction with the approximate tangent of a countably
\(1\)-rectifiable set.  Thus, once the positive one-dimensional density and
the mutual absolute continuity of \(\nu\) and \(\zeta\) have been established,
the tangent-cone and rectifiability part of Bethuel's argument can be
replaced by this measure-theoretic route.
\end{rem}

\subsection{Forced planar compactness}

We now prove the main result of this section.

\begin{thm}[Forced planar Bethuel compactness]
\label{thm:forced-bethuel}
Let \(U\subset\R^2\) be open and let \(\eps_j\downarrow0\).  Suppose that
\[
 v_j\in C^2(U;\R^k),
 \qquad
 F_j\in L^2(U;\R^k),
\]
solve
\begin{equation}\label{eq:forced-sequence}
 -\eps_j\Delta v_j+\frac1{\eps_j}DV(v_j)=F_j
 \qquad\text{in }U,
\end{equation}
and satisfy
\begin{equation}\label{eq:forced-sequence-bounds}
\begin{aligned}
 E_{\eps_j}(v_j;U)&\leq M,\\
 \int_U\frac{|F_j|^2}{\eps_j}\,\dd x&\leq\Lambda,\\
 \|v_j\|_{L^\infty(U)}
 +\eps_j\|\nabla v_j\|_{L^\infty(U)}
 &\leq L.
\end{aligned}
\end{equation}
Define the Radon measures
\begin{equation}\label{eq:forced-slice-measures}
\begin{aligned}
 \nu_j
 &:=
 e_{\eps_j}(v_j)\,\dd x,\\
 \zeta_j
 &:=
 \frac1{\eps_j}V(v_j)\,\dd x,\\
 \mu_{j,ab}
 &:=
 \eps_j\partial_av_j\cdot\partial_bv_j\,\dd x,\\
 q_{j,a}
 &:=
 F_j\cdot\partial_av_j\,\dd x,
 \qquad a,b\in\{1,2\}.
\end{aligned}
\end{equation}
After passing to a subsequence,
\begin{equation}\label{eq:forced-measure-convergence}
 \nu_j\stackrel{*}{\rightharpoonup}\nu,
 \qquad
 \zeta_j\stackrel{*}{\rightharpoonup}\zeta,
 \qquad
 \mu_{j,ab}\stackrel{*}{\rightharpoonup}\mu_{ab},
 \qquad
 q_{j,a}\stackrel{*}{\rightharpoonup}q_a
\end{equation}
locally in \(U\).
Write \(q:=(q_1,q_2)\) for the limiting vector-valued measure.

There exist a relatively closed, countably \(1\)-rectifiable set
\(S\subset U\), densities \(e,\Theta>0\), and a symmetric matrix-valued
density \(m=(m_{ab})\) such that
\begin{equation}\label{eq:forced-bethuel-representations}
 \nu=e\,\cH^1\llcorner S,
 \qquad
 \zeta=\Theta\,\cH^1\llcorner S,
 \qquad
 \mu_{ab}=m_{ab}\,\cH^1\llcorner S.
\end{equation}
The measures \(\nu\) and \(\zeta\) are mutually absolutely continuous.
If \((\tau,n)\) is an approximate tangent--normal frame of \(S\), then
\begin{equation}\label{eq:forced-bethuel-identities}
 e=m_{nn},
 \qquad
 m_{n\tau}=0,
 \qquad
 2\Theta=m_{nn}-m_{\tau\tau}
\end{equation}
for \(\cH^1\)-almost every point of \(S\).

Moreover, if
\begin{equation}\label{eq:forced-limit-stress-definition}
 T:=\nu I-\mu,
\end{equation}
then
\begin{equation}\label{eq:forced-bethuel-stress}
 T
 =
 2\Theta\,\tau\otimes\tau\,
 \cH^1\llcorner S,
 \qquad
 \operatorname{div}T=q.
\end{equation}
Finally,
\[
 q\ll\zeta,
\]
and \(q\) has a locally square-integrable density with respect to \(\zeta\).
\end{thm}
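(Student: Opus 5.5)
The plan is to obtain the compactness from the uniform bounds, establish the three hypotheses of Lemma~\ref{lem:rank-one-stress} (singularity, positive lower density, $\zeta\le\nu\ll\zeta$), apply that lemma, and then read off the remaining identities algebraically.

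\textbf{Compactness and the limiting stress identity.} The bounds \eqref{eq:forced-sequence-bounds} give uniform mass bounds on $\nu_j,\zeta_j,\mu_{j,ab}$. Cauchy--Schwarz gives
\[
|q_j|\le \Bigl(\tfrac{|F_j|^2}{\eps_j}\Bigr)^{1/2}\bigl(\eps_j|\nabla v_j|^2\bigr)^{1/2},
\qquad
|q_j|(U)\le (2M\Lambda)^{1/2},
\]
so a diagonal subsequence yields \eqref{eq:forced-measure-convergence}. The identities of Lemma~\ref{lem:forced-pohozaev},
\[
\operatorname{div}T_j=q_j,\qquad \operatorname{tr}T_j=2\zeta_j,\qquad T_j=\nu_jI-\mu_j,
\]
are linear in the measures. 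They therefore pass to the limit as $\operatorname{div}T=q$ and $\operatorname{tr}T=2\zeta$. Clearly $\zeta_j\le\nu_j$, so $\zeta\le\nu$. For the square-integrable density, take $\varphi\ge0$ and use the lower semicontinuity estimate
\[
\Bigl|\int\varphi\, X\cdot dq\Bigr|
\le \liminf_j \Bigl(\int\varphi\tfrac{|F_j|^2}{\eps_j}\Bigr)^{1/2}\Bigl(\int\varphi|X|^2\eps_j|\nabla v_j|^2\Bigr)^{1/2}.
\]
This gives $|q|\le \sqrt{\alpha}\sqrt{2\nu}$, where $\alpha$ is a weak limit of $|F_j|^2/\eps_j$. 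Consequently $q\ll\nu$, with $\int |dq/d\nu|^2d\nu\le 2\alpha(U)$. Once we know $\nu\ll\zeta$ with bounded density, this transfers to $q\ll\zeta$ with an $L^2_{\rm loc}(\zeta)$ density.

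\textbf{Lower density, singularity, and $\nu\ll\zeta$.} Rescale Theorem~\ref{thm:forced-clearing} to $B_r(x_0)$. By \eqref{eq:forced-action-scaling}, smallness requires
\[
\frac{\nu_j(B_r)}{r}\le 2\eta_{\rm f},\qquad r\int_{B_r}\frac{|F_j|^2}{\eps_j}\le\gamma_{\rm f},
\]
and the second condition holds automatically for $r\le\gamma_{\rm f}/\Lambda$. If $\nu(B_r(x_0))<\eta_{\rm f} r$, then for large $j$ the strong inner estimate \eqref{eq:forced-strong-inner} gives
\[
E_{\eps_j}(v_j;B_{5r/8})\le C(\eps_j/r)^2 r(\eta_{\rm f}+\Lambda)\to0,
\]
so $\nu(B_{r/2})=0$ and $x_0\notin\operatorname{supp}\nu$. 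Hence $\nu(B_r(x))\ge \eta_{\rm f} r$ for every $x\in S:=\operatorname{supp}\nu$ and every small $r$. This set is relatively closed, and the density bound yields the lower density hypothesis. A $5r$-covering gives $\mathcal H^1\llcorner S\le C\nu$, so $S$ is $\mathcal H^1$-$\sigma$-finite and $\nu\perp\mathcal L^2$. Then $T\perp\mathcal L^2$ as well, since $|\mu|\le2\nu$.

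For $\nu\ll\zeta$, pass the scaled comparison \eqref{eq:forced-energy-potential-scaled} to the limit. The $\eps/r$ and $\eps^2$ terms vanish, giving $\nu(B_{r/2}(x))\le C\zeta(\overline{B_{3r/4}(x)})$ for all small balls. Because $\nu$ is asymptotically doubling on $S$ (lower density $\ge\eta_{\rm f}$, and the upper density is finite $\nu$-a.e.), a Besicovitch differentiation argument gives $\nu\le C\zeta$.

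\textbf{Rank one and the Bethuel relations.} All hypotheses of Lemma~\ref{lem:rank-one-stress} now hold. It gives
\[
\zeta=\Theta\,\cH^1\llcorner S',\qquad T=2\Theta\,\tau\otimes\tau\,\cH^1\llcorner S',
\]
with $\tau$ the approximate tangent. Mutual absolute continuity lets us replace $S'$ by $S$ up to null sets, and then $\nu=e\,\cH^1\llcorner S$ and $\mu=\nu I-T$, which gives the densities $m$. Reading $m=eI-2\Theta\,\tau\otimes\tau$ in the frame $(\tau,n)$ gives $m_{nn}=e$, $m_{n\tau}=0$, $m_{\tau\tau}=e-2\Theta$. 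This is \eqref{eq:forced-bethuel-identities}.

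The main obstacle is the step $\nu\ll\zeta$. Passing from ball comparisons of $\nu$ with $\zeta$ on enlarged balls to a pointwise density bound needs doubling-type control, which the lower density bound and finite upper density should provide. If that fails, I would instead use \eqref{eq:forced-fixed-well} at almost every good radius directly.
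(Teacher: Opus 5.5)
Your outline is the paper's: clearing-out lower density, $5r$-covering, the limiting energy--potential comparison, Lemma~\ref{lem:rank-one-stress}, and reading the identities off $m=eI-2\Theta\,\tau\otimes\tau$. Your measure-theoretic bound $|q|\le\sqrt{2\alpha\nu}$ is a valid substitute for the paper's quadratic-form estimate. Replacing $S'$ by $\operatorname{spt}\nu$ via $\cH^1\llcorner S\le C\nu$ is also sound.

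\textbf{The gap.} The gap is the step you flagged: passing from $\nu(B_{r/2}(x))\le C\zeta(\overline{B_{3r/4}(x)})$ to $\nu\le C\zeta$. You justify this by saying that $\nu$ has finite upper $1$-density $\nu$-a.e., but nothing established at that point gives this.
\begin{enumerate}
\item In the vectorial case the discrepancy $\frac{\eps}{2}|\nabla v|^2-\frac1\eps V(v)$ has no sign. There is therefore no monotonicity formula bounding $\nu(B_r(x))/r$ from above.
\item Finiteness of $\Theta^{*1}(\nu,\cdot)$ $\nu$-a.e.\ is equivalent to $\nu\ll\cH^1$; it excludes atoms and concentration on $\cH^1$-null sets. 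In this argument, $\nu\ll\cH^1$ is an output of Allard's theorem inside Lemma~\ref{lem:rank-one-stress}, whose hypotheses include $\nu\ll\zeta$. So the step is circular.
\item Even granting it, your doubling constant $4K_x/\eta_{\rm f}$ depends on the point. That gives $d\nu/d\zeta<\infty$, but not the uniform bound $d\nu/d\zeta\le C$ that you later use to turn $dq/d\nu\in L^2_{\rm loc}(\nu)$ into an $L^2_{\rm loc}(\zeta)$ density.
\end{enumerate}

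\textbf{The missing ingredient.} You need a general fact valid for every Radon measure on $\R^2$: for $\nu$-a.e.\ $x$ there are radii $r_\ell\downarrow0$ with $\nu(B_{r_\ell}(x))\le C\nu(B_{r_\ell/2}(x))$. Any $C>4$ works. On the set where $\nu(B_{2r}(x))>C\nu(B_r(x))$ for all small $r$, iteration gives $\nu(B_r(x))\lesssim r^{\log_2 C}$. That set is then $\nu$-null, because $\cH^s\equiv0$ on $\R^2$ for $s>2$.

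Combine this along $r_\ell$ with your comparison and with differentiation of $\zeta$ with respect to $\nu$; this gives $\nu\ll\zeta$. That is the paper's Step~4, which uses the fact with an $x$-dependent constant. The paper obtains the uniform bound only after applying Lemma~\ref{lem:rank-one-stress}. At common $\cH^1$-density points one has $\nu(B_r)=2er+o(r)$ and $\zeta(B_r)=2\Theta r+o(r)$, so the comparison yields $\Theta\le e\le C(V,L)\Theta$. This is what makes the $L^2(\zeta)$ transfer for $q$ legitimate. With the universal constant $C>4$ you could instead get $d\nu/d\zeta\le C'$ directly before rectifiability.
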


\begin{proof}
We divide the proof into seven steps.

\medskip
\noindent
\textbf{Step 1: Weak-star compactness of the diffuse measures.}
By definition,
\[
 0\leq\zeta_j\leq\nu_j.
\]
Moreover,
\[
 |\mu_{j,ab}|
 \leq
 \frac{\eps_j}{2}
 \left(
 |\partial_av_j|^2+|\partial_bv_j|^2
 \right)\dd x
 \leq2\nu_j.
\]
Thus \((\nu_j)\), \((\zeta_j)\), and \((\mu_{j,ab})\) are locally
uniformly bounded families of Radon measures.  A diagonal extraction over
a countable exhaustion of \(U\) by relatively compact open sets gives
\[
 \nu_j\stackrel{*}{\rightharpoonup}\nu,
 \qquad
 \zeta_j\stackrel{*}{\rightharpoonup}\zeta,
 \qquad
 \mu_{j,ab}\stackrel{*}{\rightharpoonup}\mu_{ab}.
\]
The diffuse domination inequalities pass to the limit:
\begin{equation}\label{eq:forced-limit-dominations}
 0\leq\zeta\leq\nu,
 \qquad
 |\mu_{ab}|\leq2\nu.
\end{equation}

Set \(q_j:=(q_{j,1},q_{j,2})\).
For the momentum, direct Cauchy--Schwarz gives
\begin{align*}
 |q_j|(U)
 &\leq
 \int_U|F_j|\,|\nabla v_j|\,\dd x\\
 &\leq
 \left(
 \int_U\frac{|F_j|^2}{\eps_j}\,\dd x
 \right)^{\frac12}
 \left(
 \int_U\eps_j|\nabla v_j|^2\,\dd x
 \right)^{\frac12}\\
 &\leq
 \sqrt{2\Lambda M}.
\end{align*}
Hence, after a further extraction,
\[
 q_j\stackrel{*}{\rightharpoonup}q
 \qquad\text{in }\mathcal M_{\rm loc}(U;\R^2).
\]

More precisely, for every
\(X=(X_1,X_2)\in C_c(U;\R^2)\),
\begin{align}
 \left|
 \int_U X\cdot\dd q_j
 \right|^2
 &=
 \left|
 \int_U
 F_j\cdot
 \bigl(X_a\partial_av_j\bigr)\,\dd x
 \right|^2
 \nonumber\\
 &\leq
 \left(
 \int_U\frac{|F_j|^2}{\eps_j}\,\dd x
 \right)
 \left(
 \int_U
 \eps_j
 X_aX_b
 \partial_av_j\cdot\partial_bv_j\,\dd x
 \right)
 \nonumber\\
 &\leq
 \Lambda
 \int_U X_aX_b\,\dd\mu_{j,ab}.
 \label{eq:forced-momentum-action}
\end{align}
Passing to the limit yields
\begin{equation}\label{eq:forced-limit-momentum-action}
 \left|
 \int_U X\cdot\dd q
 \right|^2
 \leq
 \Lambda
 \int_U X_aX_b\,\dd\mu_{ab}.
\end{equation}

Taking \(X=\varphi e_a\) in
\eqref{eq:forced-limit-momentum-action} gives
\[
 \left|
 \int_U\varphi\,\dd q_a
 \right|^2
 \leq
 \Lambda\int_U\varphi^2\,\dd\mu_{aa}
 \leq
 2\Lambda\int_U\varphi^2\,\dd\nu.
\]
By the Riesz representation theorem for \(L^2(\nu)\), this implies
\begin{equation}\label{eq:forced-q-L2-nu}
 q\ll\nu,
 \qquad
 \frac{\dd q}{\dd\nu}\in L^2_{\rm loc}(\nu;\R^2).
\end{equation}

\medskip
\noindent
\textbf{Step 2: Positive lower one-dimensional density.}
Let
\[
 S:=\operatorname{spt}\nu\cap U.
\]
Fix compact sets
\[
 K\Subset K'\Subset U.
\]
We claim that there exist \(r_*>0\) and \(\eta_*>0\), depending only on
\(V,L,\Lambda,K,K'\), such that
\begin{equation}\label{eq:forced-lower-density}
 \nu(B_r(x))\geq\eta_*r
\end{equation}
for every \(x\in S\cap K\) and every \(0<r<r_*\).

Choose \(r_*>0\) so small that
\[
 B_{r_*}(x)\subset K'
 \qquad\text{for every }x\in K,
\]
and
\begin{equation}\label{eq:forced-small-rescaled-action}
 r_*\Lambda\leq\gamma_{\rm f},
\end{equation}
where \(\gamma_{\rm f}\) is the constant in
Theorem~\ref{thm:forced-clearing}.

Suppose that \eqref{eq:forced-lower-density} fails for some
\(x\in S\cap K\) and \(0<r<r_*\).  Then
\[
 \nu(B_r(x))<\eta_*r.
\]
Choose
\[
 s\in\left(\frac{3r}{4},r\right)
\]
such that
\[
 \nu(\partial B_s(x))=0.
\]
This is possible because a Radon measure charges at most countably many
pairwise disjoint circles centered at \(x\).  Since \(B_s(x)\subset B_r(x)\),
\[
 \frac{\nu(B_s(x))}{s}
 \leq
 \frac{4}{3}\eta_*.
\]
The boundary condition
\(\nu(\partial B_s(x))=0\) and weak convergence imply
\[
 E_{\eps_j}(v_j;B_s(x))
 =
 \nu_j(B_s(x))
 \longrightarrow
 \nu(B_s(x)).
\]
Define the rescaled maps and forcing terms
\[
 \widetilde v_j(y):=v_j(x+sy),
 \qquad
 \widetilde\eps_j:=\frac{\eps_j}{s},
 \qquad
 \widetilde F_j(y):=sF_j(x+sy).
\]
Then
\[
 -\widetilde\eps_j\Delta\widetilde v_j
 +\frac1{\widetilde\eps_j}DV(\widetilde v_j)
 =
 \widetilde F_j
 \qquad\text{in }B_1.
\]
Furthermore,
\[
 E_{\widetilde\eps_j}(\widetilde v_j;B_1)
 =
 \frac1sE_{\eps_j}(v_j;B_s(x))
 \longrightarrow
 \frac{\nu(B_s(x))}{s}
 \leq\frac43\eta_*,
\]
and
\begin{align}
 \int_{B_1}
 \frac{|\widetilde F_j|^2}{\widetilde\eps_j}\,\dd y
 &=
 s\int_{B_s(x)}\frac{|F_j|^2}{\eps_j}\,\dd x
 \nonumber\\
 &\leq
 s\Lambda
 \leq
 r_*\Lambda
 \leq\gamma_{\rm f}.
 \label{eq:forced-rescaled-action-density}
\end{align}
The scaled uniform estimate is unchanged:
\[
 \|\widetilde v_j\|_{L^\infty(B_1)}
 +\widetilde\eps_j
 \|\nabla\widetilde v_j\|_{L^\infty(B_1)}
 \leq L.
\]
Choose \(\eta_*>0\) so small that
\[
 \frac43\eta_*<2\eta_{\rm f}.
\]
For all sufficiently large \(j\),
Theorem~\ref{thm:forced-clearing} applies to \(\widetilde v_j\).
Its strong inner estimate gives
\[
 E_{\widetilde\eps_j}
 \left(
 \widetilde v_j;B_{\frac58}
 \right)
 \leq
 C\widetilde\eps_j^2
 \left\{
 E_{\widetilde\eps_j}(\widetilde v_j;B_1)
 +
 \int_{B_1}
 \frac{|\widetilde F_j|^2}{\widetilde\eps_j}\,\dd y
 \right\}.
\]
Because \(\widetilde\eps_j=\eps_j/s\to0\), the right-hand side tends to
zero.  Scaling back,
\[
 \nu_j\left(B_{\frac{5s}{8}}(x)\right)
 =
 sE_{\widetilde\eps_j}
 \left(
 \widetilde v_j;B_{\frac58}
 \right)
 \longrightarrow0.
\]
By the Portmanteau inequality for the open ball
\(B_{5s/8}(x)\),
\[
 \nu\left(B_{\frac{5s}{8}}(x)\right)
 \leq
 \liminf_{j\to\infty}
 \nu_j\left(B_{\frac{5s}{8}}(x)\right)
 =0.
\]
This contradicts \(x\in\operatorname{spt}\nu\).  Therefore
\eqref{eq:forced-lower-density} holds.

In particular,
\begin{equation}\label{eq:forced-positive-lower-density-ae}
 \Theta_*^1(\nu,x)>0
 \qquad\text{for every }x\in S.
\end{equation}

\medskip
\noindent
\textbf{Step 3: Locally finite length of the support.}
We next prove
\begin{equation}\label{eq:forced-support-length}
 \cH^1(S\cap K)
 \leq
 C\eta_*^{-1}\nu(K')
 <\infty.
\end{equation}
Fix \(0<\delta<r_*/10\).  The family
\[
 \left\{
 B_r(x):
 x\in S\cap K,\ 0<r<\delta
 \right\}
\]
is a fine cover of \(S\cap K\).  By the \(5r\)-covering theorem, there
exists a countable pairwise disjoint subfamily
\((B_{r_i}(x_i))_i\) such that
\[
 S\cap K\subset\bigcup_iB_{5r_i}(x_i).
\]
Using \eqref{eq:forced-lower-density},
\[
 \sum_i diam(B_{5r_i}(x_i))
 =
 10\sum_i r_i
 \leq
 \frac{10}{\eta_*}\sum_i\nu(B_{r_i}(x_i)).
\]
The balls \(B_{r_i}(x_i)\) are pairwise disjoint and contained in \(K'\),
so
\[
 \sum_i\nu(B_{r_i}(x_i))\leq\nu(K').
\]
Taking the infimum over such covers and then letting \(\delta\downarrow0\)
proves \eqref{eq:forced-support-length}.

Since a set of locally finite \(\cH^1\)-measure has zero
two-dimensional Lebesgue measure,
\[
 \mathcal L^2(S)=0.
\]
The measure \(\nu\) is supported on \(S\), and hence
\begin{equation}\label{eq:forced-nu-singular}
 \nu\perp\mathcal L^2.
\end{equation}
By \eqref{eq:forced-limit-dominations}, the measures \(\zeta\),
\(\mu_{ab}\), and \(T:=\nu I-\mu\) are also singular with respect to
\(\mathcal L^2\).

\medskip
\noindent
\textbf{Step 4: Comparison of the limiting full and potential energies.}
Fix \(x\in U\) and \(r>0\) such that
\[
 \overline{B_r(x)}\subset U
\]
and
\[
 \nu(\partial B_{r/2}(x))
 =
 \nu(\partial B_{3r/4}(x))
 =
 \zeta(\partial B_{3r/4}(x))
 =0.
\]
Apply the scaled comparison estimate
\eqref{eq:forced-energy-potential-scaled} to \(v_j\) on \(B_r(x)\):
\begin{align}
 \nu_j\left(B_{\frac r2}(x)\right)
 \leq C(V,L)\bigg\{&
 \zeta_j\left(B_{\frac{3r}{4}}(x)\right)
 \nonumber\\
 &+\frac{\eps_j}{r}
 \nu_j\left(
 B_{\frac{3r}{4}}(x)\setminus B_{\frac r2}(x)
 \right)
 \nonumber\\
 &+\eps_j^2
 \int_{B_r(x)}\frac{|F_j|^2}{\eps_j}\,\dd x
 \bigg\}.
 \label{eq:forced-prelimit-comparison}
\end{align}
The second error term satisfies
\[
 \frac{\eps_j}{r}
 \nu_j\left(
 B_{\frac{3r}{4}}(x)\setminus B_{\frac r2}(x)
 \right)
 \leq
 \frac{\eps_j}{r}M
 \longrightarrow0,
\]
and the forcing error satisfies
\[
 \eps_j^2
 \int_{B_r(x)}\frac{|F_j|^2}{\eps_j}\,\dd x
 \leq
 \eps_j^2\Lambda
 \longrightarrow0.
\]
Passing to the limit in
\eqref{eq:forced-prelimit-comparison} gives
\begin{equation}\label{eq:forced-limit-comparison}
 \nu\left(B_{\frac r2}(x)\right)
 \leq
 C(V,L)
 \zeta\left(B_{\frac{3r}{4}}(x)\right).
\end{equation}
By approximation of the radius from above and below, the same estimate
holds for every sufficiently small \(r\) for which the two sides are
well-defined; this is enough for the differentiation argument below.

We already know from \eqref{eq:forced-limit-dominations} that
\[
 \zeta\leq\nu.
\]
We now prove the reverse absolute continuity
\[
 \nu\ll\zeta.
\]
Let
\[
 f:=\frac{\dd\zeta}{\dd\nu},
 \qquad 0\leq f\leq1.
\]
Suppose, for contradiction, that
\[
 A:=\{x:f(x)=0\}
\]
has positive \(\nu\)-measure.  By the differentiation theorem, for
\(\nu\)-almost every \(x\in A\),
\begin{equation}\label{eq:forced-zero-density-ratio}
 \frac{\zeta(B_r(x))}{\nu(B_r(x))}
 \longrightarrow0
 \qquad\text{as }r\downarrow0.
\end{equation}
A standard pointwise doubling consequence of the Besicovitch covering
theorem states that, for \(\nu\)-almost every \(x\), there exist
\(C_x<\infty\) and a sequence \(r_\ell\downarrow0\) such that
\begin{equation}\label{eq:forced-doubling-sequence}
 \nu(B_{r_\ell}(x))
 \leq
 C_x\nu\left(B_{\frac{r_\ell}{2}}(x)\right).
\end{equation}
Choose \(x\in A\) for which
\eqref{eq:forced-zero-density-ratio} and
\eqref{eq:forced-doubling-sequence} both hold.  Combining
\eqref{eq:forced-limit-comparison} with
\eqref{eq:forced-doubling-sequence}, we obtain
\begin{align*}
 \nu(B_{r_\ell}(x))
 &\leq
 C_x\nu\left(B_{\frac{r_\ell}{2}}(x)\right)\\
 &\leq
 C_xC(V,L)
 \zeta\left(B_{\frac{3r_\ell}{4}}(x)\right)\\
 &\leq
 C_xC(V,L)\zeta(B_{r_\ell}(x)).
\end{align*}
After division by \(\nu(B_{r_\ell}(x))>0\), this gives
\[
 1
 \leq
 C_xC(V,L)
 \frac{\zeta(B_{r_\ell}(x))}
 {\nu(B_{r_\ell}(x))},
\]
contradicting \eqref{eq:forced-zero-density-ratio}.  Therefore
\begin{equation}\label{eq:forced-mutual-ac}
 \zeta\leq\nu,
 \qquad
 \nu\ll\zeta.
\end{equation}

At every common differentiation point of \(\nu\) and \(\zeta\) lying on a
rectifiable carrier, \eqref{eq:forced-limit-comparison} will also imply
the local density bound
\begin{equation}\label{eq:forced-density-comparison}
 \frac{\dd\nu}{\dd\zeta}\leq C(V,L).
\end{equation}
We shall justify this after rectifiability has been established.

\medskip
\noindent
\textbf{Step 5: The exact stress identities.}
Define the diffuse stresses
\[
 T_j
 :=
 \nu_jI-\mu_j,
\]
that is,
\[
 T_{j,ab}
 =
 e_{\eps_j}(v_j)\delta_{ab}\,\dd x
 -
 \eps_j\partial_av_j\cdot\partial_bv_j\,\dd x.
\]
A direct computation using \eqref{eq:forced-sequence} gives
\begin{align*}
 \partial_b
 \left(
 e_{\eps_j}(v_j)\delta_{ab}
 -
 \eps_j\partial_av_j\cdot\partial_bv_j
 \right)
 &=
 \left(
 -\eps_j\Delta v_j
 +\frac1{\eps_j}DV(v_j)
 \right)\cdot\partial_av_j\\
 &=
 F_j\cdot\partial_av_j.
\end{align*}
Hence
\begin{equation}\label{eq:forced-exact-stress-divergence}
 \operatorname{div}T_j=q_j.
\end{equation}
Since the spatial dimension is two,
\begin{align*}
 \operatorname{tr}T_j
 &=
 2e_{\eps_j}(v_j)\,\dd x
 -
 \eps_j|\nabla v_j|^2\,\dd x\\
 &=
 \frac2{\eps_j}V(v_j)\,\dd x
 =
 2\zeta_j.
\end{align*}
Thus
\begin{equation}\label{eq:forced-exact-stress}
 \operatorname{div}T_j=q_j,
 \qquad
 \operatorname{tr}T_j=2\zeta_j.
\end{equation}
Passing to the weak-star limits gives
\begin{equation}\label{eq:forced-limit-stress-identities}
 \operatorname{div}T=q,
 \qquad
 \operatorname{tr}T=2\zeta,
 \qquad
 T:=\nu I-\mu.
\end{equation}

By \eqref{eq:forced-nu-singular} and
\eqref{eq:forced-limit-dominations},
\[
 T\perp\mathcal L^2.
\]
Together with \eqref{eq:forced-positive-lower-density-ae},
\eqref{eq:forced-mutual-ac}, and
\eqref{eq:forced-limit-stress-identities}, all the hypotheses of
Lemma~\ref{lem:rank-one-stress} are satisfied.

\medskip
\noindent
\textbf{Step 6: Rectifiability and the Bethuel identities.}
Lemma~\ref{lem:rank-one-stress} gives a countably \(1\)-rectifiable set
\(S_0\subset U\), a positive density \(\Theta\), and an approximate unit
tangent \(\tau\) such that
\begin{equation}\label{eq:forced-stress-on-S0}
 \zeta
 =
 \Theta\,\cH^1\llcorner S_0,
 \qquad
 T
 =
 2\Theta\,\tau\otimes\tau\,
 \cH^1\llcorner S_0.
\end{equation}
Because \(\nu\ll\zeta\), there exists a positive density \(e\) such that
\[
 \nu=e\,\cH^1\llcorner S_0.
\]
Similarly, \(|\mu_{ab}|\leq2\nu\) implies that
\[
 \mu_{ab}
 =
 m_{ab}\,\cH^1\llcorner S_0
\]
for a symmetric matrix-valued density \(m=(m_{ab})\).

At \(\cH^1\)-almost every point of \(S_0\), choose the orthonormal frame
\((\tau,n)\), where \(\tau\) spans the approximate tangent line and \(n\)
is a unit normal.  The identity
\[
 T=\nu I-\mu
 =
 2\Theta\,\tau\otimes\tau\,
 \cH^1\llcorner S_0
\]
becomes, at the level of densities,
\begin{equation}\label{eq:forced-density-matrix-identity}
 eI-m
 =
 2\Theta\,\tau\otimes\tau.
\end{equation}
Taking the \(n\otimes n\) component gives
\[
 e-m_{nn}=0,
\]
and hence
\[
 e=m_{nn}.
\]
Taking the \(n\otimes\tau\) component gives
\[
 -m_{n\tau}=0,
\]
and hence
\[
 m_{n\tau}=0.
\]
Finally, the \(\tau\otimes\tau\) component gives
\[
 e-m_{\tau\tau}=2\Theta.
\]
Using \(e=m_{nn}\), we obtain
\[
 2\Theta=m_{nn}-m_{\tau\tau}.
\]
Because each \(\mu_j\) is positive semidefinite as a matrix-valued
measure, so is its weak-star limit \(\mu\); hence
\(m_{\tau\tau}\geq0\).  This proves
\eqref{eq:forced-bethuel-identities}.

We now verify \eqref{eq:forced-density-comparison}.  At a common
\(\cH^1\)-density point of \(S_0\), \(e\), and \(\Theta\),
\[
 \nu(B_r(x))=2e(x)r+o(r),
 \qquad
 \zeta(B_r(x))=2\Theta(x)r+o(r).
\]
Applying \eqref{eq:forced-limit-comparison} gives
\[
 e(x)
 \leq
 C(V,L)\Theta(x)
\]
after absorbing the fixed radius ratios \(1/2\) and \(3/4\) into the
constant.  Thus
\begin{equation}\label{eq:forced-e-theta-comparison}
 \Theta\leq e\leq C(V,L)\Theta
 \qquad\text{for }\cH^1\text{-almost every }x\in S_0.
\end{equation}

\medskip
\noindent
\textbf{Step 7: The closed carrier and the \(L^2\) momentum density.}
We finally replace \(S_0\) by the relatively closed set
\[
 S:=\operatorname{spt}\nu\cap U.
\]
Since \(\nu(U\setminus S_0)=0\), we have
\[
 \nu(S\setminus S_0)=0.
\]
We claim that
\[
 \cH^1(S\setminus S_0)=0.
\]
Indeed, let \(A:=S\setminus S_0\), and fix \(K\Subset U\).
For every open set \(O\supset A\cap K\), the lower density estimate
\eqref{eq:forced-lower-density} and the same \(5r\)-covering argument as
in Step~3 give
\[
 \cH^1(A\cap K)
 \leq
 C\eta_*^{-1}\nu(O).
\]
Since \(\nu(A)=0\), outer regularity allows us to choose \(O\supset A\)
with arbitrarily small \(\nu(O)\).  Hence
\[
 \cH^1(A\cap K)=0.
\]
Exhausting \(U\) by compact sets proves the claim.  Adjoining the
\(\cH^1\)-null set \(S\setminus S_0\) preserves countable rectifiability
and all the representations above.  Thus \(S\) is relatively closed and
\eqref{eq:forced-bethuel-representations} holds on \(S\).

It remains to improve \eqref{eq:forced-q-L2-nu} to an \(L^2(\zeta)\)
statement.  Write
\[
 q=b\,\nu,
 \qquad
 b:=\frac{\dd q}{\dd\nu}
 \in L^2_{\rm loc}(\nu;\R^2).
\]
By \eqref{eq:forced-e-theta-comparison},
\[
 \frac{\dd\nu}{\dd\zeta}
 =
 \frac{e}{\Theta}
 \leq C(V,L).
\]
Therefore
\[
 \frac{\dd q}{\dd\zeta}
 =
 b\,\frac{\dd\nu}{\dd\zeta},
\]
and, on every compact \(K\Subset U\),
\begin{align*}
 \int_K
 \left|
 \frac{\dd q}{\dd\zeta}
 \right|^2\dd\zeta
 &=
 \int_K
 |b|^2
 \left(
 \frac{\dd\nu}{\dd\zeta}
 \right)^2
 \dd\zeta\\
 &=
 \int_K
 |b|^2
 \frac{\dd\nu}{\dd\zeta}
 \,\dd\nu\\
 &\leq
 C(V,L)\int_K|b|^2\,\dd\nu
 <\infty.
\end{align*}
Thus
\[
 q\ll\zeta,
 \qquad
 \frac{\dd q}{\dd\zeta}
 \in L^2_{\rm loc}(\zeta;\R^2).
\]
All the assertions of the theorem are proved.
\end{proof}

\begin{rem}[Correspondence with Bethuel's conclusions]
For \(F_j=0\), the conclusions of
Theorem~\ref{thm:forced-bethuel} correspond to the following results in
\cite{Bethuel2025}:

\begin{enumerate}
\item the existence of a closed countably \(1\)-rectifiable concentration
set with locally finite length corresponds to
\cite[Theorem~1.2]{Bethuel2025};

\item the representations of \(\nu\) and \(\zeta\), their common carrier,
and the comparison of their densities correspond to
\cite[Theorem~1.8]{Bethuel2025};

\item the identities
\[
 m_{n\tau}=0,
 \qquad
 2\Theta=m_{nn}-m_{\tau\tau}
\]
are precisely the vectorial discrepancy relations in
\cite[Theorem~1.9, (1.48)]{Bethuel2025};

\item the limiting stress equation is the forced real-variable analogue of
the domain-stationarity identity
\cite[Lemma~1.19, (1.71)--(1.72)]{Bethuel2025}.
\end{enumerate}

The forcing affects the proof only through the rescaled action bound, the
additional errors in the energy--potential comparison, and the
measure-valued right-hand side
\(\operatorname{div}T=q\).  Once these terms are controlled, the limiting
rank-one stress and the Bethuel discrepancy relations have the same form as
in the unforced problem.
\end{rem}

\section{Parabolic compactness}
\label{sec:parabolic-compactness}

This section transfers the forced elliptic compactness theory of
Section~\ref{sec:forced-bethuel} to the parabolic equation
\eqref{eq:intro-flow}.  There are two points that require some care.
First, the elliptic forcing at a fixed time is
\[
 F_j(\cdot,t)=-\eps_j\partial_tu_j(\cdot,t),
\]
and the parabolic dissipation controls its action only for almost every
time.  Second, although one may choose a time-dependent subsequence at
almost every fixed time, the limiting potential and gradient measures used
later must arise from a single spacetime subsequence.  The latter issue is
resolved by first taking spacetime limits and then disintegrating them with
respect to time.

The proof is organized as follows.  We first establish the exact diffuse
balance laws and the positive-time bounds needed in the forced elliptic
theorem.  We then obtain compactness of both the phase maps and the
time-dependent energy measures.  Finally, we apply
Theorem~\ref{thm:forced-bethuel} at almost every time, disintegrate the
remaining spacetime measures, and identify the limiting stress and the
Bethuel relations on almost every time slice.

Throughout this section, we write
\[
 u_j:=u_{\eps_j},
\]
and use the diffuse measures introduced in
\eqref{eq:intro-measures}--\eqref{eq:intro-action}.

\subsection{Diffuse balance laws and positive-time estimates}

We begin with two exact identities.  They use only the equation and do not
require a scalar discrepancy estimate.

\begin{lem}[Diffuse stress and energy balances]
\label{lem:diffuse-balances}
For every smooth solution \(u_\eps\) of \eqref{eq:intro-flow}, one has
\begin{equation}\label{eq:diffuse-stress-balance}
 \operatorname{div}_xT_\eps^t=-q_\eps^t
\end{equation}
and
\begin{equation}\label{eq:diffuse-local-energy}
 \partial_te_\eps
 =
 \operatorname{div}_xq_\eps-\alpha_\eps
\end{equation}
in the sense of distributions on \(\T^2\times(0,T)\), where
\[
 e_\eps
 =
 \frac{\eps}{2}|\nabla u_\eps|^2
 +\frac1\eps V(u_\eps),
 \qquad
 q_{\eps,i}
 =
 \eps\partial_tu_\eps\cdot\partial_i u_\eps,
 \qquad
 \alpha_\eps
 =
 \eps|\partial_tu_\eps|^2.
\]
Consequently, for every \(0\leq t_1\leq t_2\leq T\),
\begin{equation}\label{eq:global-dissipation}
 E_\eps(u_\eps(\cdot,t_2);\T^2)
 +
 \int_{t_1}^{t_2}\int_{\T^2}
 \eps|\partial_tu_\eps|^2\,\dd x\dd t
 =
 E_\eps(u_\eps(\cdot,t_1);\T^2).
\end{equation}
\end{lem}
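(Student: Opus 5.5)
The plan is to verify the two identities pointwise for the smooth solution \(u_\eps\) and then integrate. Since \(u_\eps\) is smooth on \(\T^2\times(0,T)\), the distributional statements reduce to classical pointwise identities. I will also assume \(u_\eps\) is continuous up to \(t=0\) and \(t=T\) with the relevant derivatives, so that the endpoint cases of the global identity make sense; otherwise I would prove it for \(0<t_1\le t_2<T\) and pass to the limit.

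For the stress balance I reuse the computation in the proof of Lemma~\ref{lem:forced-pohozaev}. There it was shown that \(\partial_j T_{\eps,ij}=\bigl(-\eps\Delta u_\eps+\tfrac1\eps DV(u_\eps)\bigr)\cdot\partial_iu_\eps\). Expanding, \(\partial_i e_\eps\) produces \(\eps\,\partial_{ij}u_\eps\cdot\partial_ju_\eps+\tfrac1\eps DV(u_\eps)\cdot\partial_iu_\eps\). The term \(-\partial_j(\eps\,\partial_iu_\eps\cdot\partial_ju_\eps)\) produces \(-\eps\,\partial_{ij}u_\eps\cdot\partial_ju_\eps-\eps\,\partial_iu_\eps\cdot\Delta u_\eps\). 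The mixed second-derivative terms cancel. Multiplying \eqref{eq:intro-flow} by \(\eps\) gives \(-\eps\Delta u_\eps+\tfrac1\eps DV(u_\eps)=-\eps\partial_tu_\eps\). Hence \(\partial_jT_{\eps,ij}=-\eps\,\partial_tu_\eps\cdot\partial_iu_\eps=-q_{\eps,i}\) at every time, which is \eqref{eq:diffuse-stress-balance}.

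For the energy balance I differentiate \(e_\eps\) in time. This gives \(\partial_te_\eps=\eps\nabla u_\eps:\nabla\partial_tu_\eps+\tfrac1\eps DV(u_\eps)\cdot\partial_tu_\eps\). I rewrite the first term as \(\operatorname{div}_x(\eps\,\partial_tu_\eps\cdot\nabla u_\eps)-\eps\,\partial_tu_\eps\cdot\Delta u_\eps\). The remaining terms combine to \(\eps\,\partial_tu_\eps\cdot\bigl(-\Delta u_\eps+\tfrac1{\eps^2}DV(u_\eps)\bigr)=-\eps|\partial_tu_\eps|^2\) by \eqref{eq:intro-flow}. This yields \eqref{eq:diffuse-local-energy} pointwise, with \(q_{\eps,i}=\eps\,\partial_tu_\eps\cdot\partial_iu_\eps\).

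Finally I integrate \eqref{eq:diffuse-local-energy} over \(\T^2\). The divergence term vanishes by periodicity, leaving \(\tfrac{\dd}{\dd t}E_\eps(u_\eps(\cdot,t);\T^2)=-\int_{\T^2}\eps|\partial_tu_\eps|^2\). Integrating from \(t_1\) to \(t_2\) gives \eqref{eq:global-dissipation}. The only delicate point is the behaviour at the endpoints \(t=0,T\), which requires enough regularity of \(u_\eps\) up to the closed time interval. Apart from that, the argument is a routine computation.
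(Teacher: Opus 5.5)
Your proposal is correct and follows the paper's proof: the same pointwise computation of \(\partial_jT_{\eps,ij}\) and \(\partial_te_\eps\), the equation rewritten as \(-\eps\Delta u_\eps+\frac1\eps DV(u_\eps)=-\eps\partial_tu_\eps\), and integration over the boundaryless torus followed by integration in time. Your caution about the endpoints \(t=0,T\) is the only addition. It amounts to the regularity up to the closed time interval that the paper implicitly assumes by calling the solutions smooth and imposing the bound on \(E_{\eps}(u_\eps(\cdot,0))\).
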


\begin{proof}
Differentiate the energy density:
\begin{align*}
 \partial_te_\eps
 &=
 \eps\nabla u_\eps:\nabla\partial_tu_\eps
 +\frac1\eps DV(u_\eps)\cdot\partial_tu_\eps\\
 &=
 \operatorname{div}_x
 \left(
 \eps\partial_tu_\eps\cdot\nabla u_\eps
 \right)
 +
 \left(
 -\eps\Delta u_\eps+\frac1\eps DV(u_\eps)
 \right)\cdot\partial_tu_\eps.
\end{align*}
The parabolic equation is equivalent to
\[
 -\eps\Delta u_\eps+\frac1\eps DV(u_\eps)
 =
 -\eps\partial_tu_\eps.
\]
Hence
\[
 \partial_te_\eps
 =
 \operatorname{div}_xq_\eps
 -\eps|\partial_tu_\eps|^2,
\]
which proves \eqref{eq:diffuse-local-energy}.

Next, recall that
\[
 T_{\eps,ij}
 =
 e_\eps\delta_{ij}
 -\eps\partial_i u_\eps\cdot\partial_j u_\eps.
\]
A direct computation gives
\begin{align*}
 \partial_jT_{\eps,ij}
 &=
 \partial_i e_\eps
 -\eps\partial_{ij}u_\eps\cdot\partial_j u_\eps
 -\eps\partial_i u_\eps\cdot\Delta u_\eps\\
 &=
 \left(
 -\eps\Delta u_\eps+\frac1\eps DV(u_\eps)
 \right)\cdot\partial_i u_\eps\\
 &=
 -\eps\partial_tu_\eps\cdot\partial_i u_\eps\\
 &=
 -q_{\eps,i}.
\end{align*}
This proves \eqref{eq:diffuse-stress-balance}.

Finally, integrate \eqref{eq:diffuse-local-energy} over the torus.
The divergence term vanishes because \(\T^2\) has no boundary, and hence
\[
 \frac{\dd}{\dd t}E_\eps(u_\eps(\cdot,t);\T^2)
 =
 -\int_{\T^2}\eps|\partial_tu_\eps|^2\,\dd x.
\]
Integration in time proves \eqref{eq:global-dissipation}.
\end{proof}

\begin{rem}[The two roles of dissipation]
The global identity \eqref{eq:global-dissipation} gives both
\[
 \sup_{0\leq t\leq T}
 E_{\eps_j}(u_j(\cdot,t);\T^2)
 \leq M_0
\]
and
\[
 \int_0^T\int_{\T^2}
 \eps_j|\partial_tu_j|^2\,\dd x\dd t
 \leq M_0.
\]
The first estimate provides spatial energy compactness.  The second is
exactly the spacetime integral of the forced action
\[
 \int_{\T^2}\frac{|F_j(\cdot,t)|^2}{\eps_j}\,\dd x,
 \qquad
 F_j(\cdot,t)=-\eps_j\partial_tu_j(\cdot,t),
\]
required by Theorem~\ref{thm:forced-bethuel}.
\end{rem}

The forced elliptic theorem also requires uniform control of the target
range and of the scaled gradient.  These estimates hold away from the
initial time.

\begin{lem}[Positive-time bounds]
\label{lem:positive-time-bounds}
For every \(0<\tau<T\), there exists
\[
 L_\tau=L_\tau(V,M_0,\tau,T)<\infty
\]
such that
\begin{equation}\label{eq:positive-time-bounds}
 \|u_j\|_{L^\infty(\T^2\times[\tau,T])}
 +
 \eps_j
 \|\nabla u_j\|_{L^\infty(\T^2\times[\tau,T])}
 \leq L_\tau
\end{equation}
for all sufficiently large \(j\).  This is the only uniformity needed
below; the finitely many discarded indices play no role in the subsequence
argument.
\end{lem}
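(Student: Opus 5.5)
The plan is to prove the two bounds separately: first an $L^\infty$ bound via a maximum-principle argument, then a scaled gradient bound via parabolic rescaling and interior estimates.

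\textbf{Step 1: the $L^\infty$ bound.} I will use Assumption~\ref{assum:potential}(iii) and compare with an ODE. Set $w:=\tfrac12|u_j|^2$. Then
\[
 \partial_tw-\Delta w=-|\nabla u_j|^2-\eps_j^{-2}u_j\cdot DV(u_j)\le-\eps_j^{-2}\alpha_\infty|u_j|^2
\]
wherever $|u_j|\ge R_\infty$. By the maximum principle, $m(t):=\max_{\T^2}w(\cdot,t)$ satisfies $m'\le-2\alpha_\infty\eps_j^{-2}m$ in the viscosity/Dini sense as long as $m\ge R_\infty^2/2$. Hence
\[
 m(t)\le\max\{R_\infty^2/2,\;m(0)e^{-2\alpha_\infty t/\eps_j^2}\}.
\]

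The difficulty is that $m(0)$ is not bounded by the energy, so this only helps if the initial data are bounded. I will therefore get the $L^\infty$ bound from a cutoff version in which $m(0)$ is replaced by a quantity controlled by $M_0$. Concretely, I consider the rescaled solution on parabolic cylinders of size $\eps_j$. In rescaled variables $U(y,s)=u_j(x_0+\eps_jy,t_0+\eps_j^2s)$ the equation becomes $\partial_sU=\Delta U-DV(U)$, independent of $\eps$, and the energy on unit cylinders is at most $M_0/\eps_j$. That bound is not uniform, but the coercivity of (iii) gives a local sup bound for $|U|^2$ by a Moser/De Giorgi iteration on subsolutions of $(\partial_s-\Delta)(|U|^2-R_\infty^2)_+\le0$.

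The local $L^1$ input for that iteration comes from
\[
 \int V(U)\,dy\le\eps_j^{-1}\int V(u_j)\,dx\le M_0\eps_j^{-2}\cdot\eps_j\,,
\]
which is again not uniform. So instead I will use time decay of the comparison. Because $\tau/\eps_j^2\to\infty$, the exponential factor $e^{-2\alpha_\infty\tau/\eps_j^2}$ kills any polynomially large $m(0)$. To avoid assuming anything about $m(0)$, I restart from a time $t'\in(0,\tau/2)$ at which $\sup|u_j(\cdot,t')|$ is at most polynomial in $\eps_j^{-1}$. Such a $t'$ exists by the rescaled local sup bound using $V(y)\gtrsim|y|^2$ for large $|y|$, which comes from integrating (iii) radially. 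Then $m(\tau)\le R_\infty^2/2+1$ for $j$ large, uniformly on $[\tau,T]$. This is why the lemma is stated only for $j$ sufficiently large.

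\textbf{Step 2: the gradient bound.} With $|u_j|\le L'$ on $[\tau/2,T]$, I again rescale by $\eps_j$. The rescaled $U$ solves $\partial_sU-\Delta U=-DV(U)$ with a bounded right-hand side (bounded by $\sup_{|y|\le L'}|DV|$) on cylinders $Q_1$ that lie inside the time range once $\eps_j^2<\tau/2$. Interior parabolic $L^p$ and Schauder estimates give $|\nabla_yU|\le C(V,L')$ on $Q_{1/2}$. Since $\nabla_yU=\eps_j\nabla_xu_j$, this yields $\eps_j\|\nabla u_j\|_{L^\infty(\T^2\times[\tau,T])}\le C$.

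\textbf{Main obstacle.} The hard part is Step 1: obtaining the restart time with at most polynomially large sup norm from the energy bound alone. If that step proves delicate, my fallback is the direct comparison function $\phi(t)=R_\infty^2+C\eps_j^2/t$, which is a universal supersolution for $w$ of Keller--Osserman type. The Keller--Osserman point is that the superlinear growth of $y\cdot DV(y)$ is not available, so I would check whether quadratic growth plus the energy bound suffices, via a local maximum principle on balls with the cutoff supported in the region $|u|\ge R_\infty$.
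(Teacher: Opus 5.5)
Your plan works, and your Step~2 is the paper's argument in local form. The paper writes Duhamel's formula on $[t-\eps_j^2,t]$ and uses $\|\nabla e^{s\Delta}f\|_{L^\infty}\le Cs^{-1/2}\|f\|_{L^\infty}$; that is the global-semigroup counterpart of your interior estimate on a rescaled unit cylinder.

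Step~1 is organized differently. The paper compares $|u_j|^2$ with $e^{-ct/\eps_j^2}e^{t\Delta}|u_j(\cdot,0)|^2+C$, which solves the linear problem $(\partial_t-\Delta)\bar w=-c\eps_j^{-2}\bar w+C\eps_j^{-2}$. It then applies the $L^1\to L^\infty$ smoothing of the heat semigroup on $\T^2$ at time scale $\tau$. The input $\|u_j(\cdot,0)\|_{L^2}^2\le C(V,M_0)$ comes from $V(y)\ge c_0|y|^2-C_0$ and the energy bound. Since the smoothing constant $C_\tau$ is already independent of $j$, the exponential factor is only a bonus. You instead smooth at the $\eps_j$-scale, where the constant is not uniform, and let the reaction term pay for it.

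The restart step, which you flagged as the main obstacle, is routine. The function $(|U|^2-R_\infty^2)_+$ is subcaloric, so the local mean-value inequality gives
$\sup_{Q_{1/2}}(|U|^2-R_\infty^2)_+\le C\iint_{Q_1}|U|^2\,\dd y\,\dd s=C\eps_j^{-4}\iint_{Q_{\eps_j}}|u_j|^2\,\dd x\,\dd t\le C\eps_j^{-2}\sup_t\|u_j(\cdot,t)\|_{L^2(\T^2)}^2\le C\eps_j^{-2}$.
Hence every $t'\in[\eps_j^2,\tau/2]$ is admissible. Your sup-ODE with rate $2\alpha_\infty\eps_j^{-2}$, run over a time $\ge\tau/2$, absorbs this polynomial factor. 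Your route buys locality and never uses the initial slice; the paper's buys brevity.

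You could drop the restart entirely. The function $(|u_j|^2-R_\infty^2)_+$ is already subcaloric in the original variables. The same mean-value inequality on cylinders of fixed size $r\sim\sqrt\tau$ then gives the $j$-uniform bound $R_\infty^2+Cr^{-2}\sup_t\|u_j(\cdot,t)\|_{L^2}^2$ directly, which is the local version of the paper's proof.

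Discard the Keller--Osserman fallback. With only linear growth of $y\cdot DV(y)$ there is no universal bound that comes down from infinity. The function $R_\infty^2+C\eps_j^2/t$ is a supersolution only for $t\gtrsim\eps_j^2$, where it is finite, so it cannot replace the integral information supplied by the energy.
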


\begin{proof}
Assumption~\ref{assum:potential} implies that there exist \(c,C>0\),
depending only on \(V\), such that
\begin{equation}\label{eq:target-coercive-growth}
 y\cdot DV(y)\geq c|y|^2-C
 \qquad\text{for every }y\in\R^k.
\end{equation}
Indeed, this is part of the assumption for large \(|y|\), and the remaining
bounded target region can be absorbed into the constant \(C\).

Set
\[
 w_j:=|u_j|^2.
\]
Using the equation and \eqref{eq:target-coercive-growth}, we obtain
\begin{align*}
 (\partial_t-\Delta)w_j
 &=
 -2|\nabla u_j|^2
 -\frac2{\eps_j^2}u_j\cdot DV(u_j)\\
 &\leq
 -\frac{2c}{\eps_j^2}w_j
 +\frac{2C}{\eps_j^2}.
\end{align*}
After changing \(c,C\), this becomes
\begin{equation}\label{eq:positive-time-scalar}
 (\partial_t-\Delta)w_j
 \leq
 -c\eps_j^{-2}w_j+C\eps_j^{-2}.
\end{equation}

The growth assumptions on \(V\) also imply
\[
 V(y)\geq c_0|y|^2-C_0.
\]
Since
\[
 \int_{\T^2}V(u_j(x,0))\,\dd x
 \leq
 \eps_jE_{\eps_j}(u_j(\cdot,0))
 \leq M_0,
\]
we have
\[
 \|u_j(\cdot,0)\|_{L^2(\T^2)}\leq C(V,M_0).
\]
Comparison in \eqref{eq:positive-time-scalar} gives
\[
 w_j(t)
 \leq
 e^{-ct/\eps_j^2}e^{t\Delta}w_j(0)+C.
\]
The \(L^1\)-to-\(L^\infty\) heat-kernel estimate on \(\T^2\) yields
\[
 \|e^{t\Delta}w_j(0)\|_{L^\infty}
 \leq
 C_\tau\|w_j(0)\|_{L^1}
 \qquad\text{for }t\geq\frac{\tau}{2}.
\]
It follows that
\begin{equation}\label{eq:positive-time-target-bound}
 \|u_j\|_{L^\infty(\T^2\times[\tau/2,T])}
 \leq C_\tau.
\end{equation}

We next estimate the gradient.  For all sufficiently large \(j\),
\[
 \eps_j^2<\frac{\tau}{2}.
\]
Fix \(t\in[\tau,T]\), and write Duhamel's formula on the interval
\([t-\eps_j^2,t]\):
\[
 u_j(t)
 =
 e^{\eps_j^2\Delta}u_j(t-\eps_j^2)
 -
 \frac1{\eps_j^2}
 \int_0^{\eps_j^2}
 e^{s\Delta}DV(u_j(t-s))\,\dd s.
\]
Taking a spatial gradient and using
\[
 \|\nabla e^{s\Delta}f\|_{L^\infty}
 \leq
 Cs^{-1/2}\|f\|_{L^\infty},
\]
we obtain
\begin{align*}
 \|\nabla u_j(t)\|_{L^\infty}
 &\leq
 C\eps_j^{-1}
 \|u_j(t-\eps_j^2)\|_{L^\infty}\\
 &\quad+
 \frac{C}{\eps_j^2}
 \int_0^{\eps_j^2}
 s^{-1/2}
 \|DV(u_j(t-s))\|_{L^\infty}\,\dd s.
\end{align*}
By \eqref{eq:positive-time-target-bound}, the arguments of \(DV\) remain
in a fixed compact subset of \(\R^k\) for \(t-s\geq\tau/2\), and hence
\[
 \|DV(u_j(t-s))\|_{L^\infty}\leq C_\tau.
\]
Therefore
\[
 \|\nabla u_j(t)\|_{L^\infty}
 \leq
 C_\tau\eps_j^{-1}
 +
 \frac{C_\tau}{\eps_j^2}
 \int_0^{\eps_j^2}s^{-1/2}\,\dd s
 \leq
 C_\tau\eps_j^{-1}.
\]
This proves \eqref{eq:positive-time-bounds}.
\end{proof}

\begin{rem}[Why the initial time is excluded]
No uniform \(L^\infty\) bound is assumed for the initial data.  The
parabolic equation supplies the required target and gradient estimates only
after a positive amount of time.  This is why all slice-wise structural
conclusions are first established on \((\tau,T)\) and then extended to
almost every \(t\in(0,T)\) by taking a countable union over
\(\tau\downarrow0\).
\end{rem}

\subsection{Compactness of the phases}

We next show that the maps converge to a partition taking values in the
finite well set.  The correct scalar quantities are weighted distances in
the target.

For each \(a\in\{1,\ldots,q\}\), define
\[
 \Phi_a(y)
 :=
 \inf_\gamma
 \int_0^1
 \sqrt{2V(\gamma(s))}\,|\gamma'(s)|\,\dd s,
\]
where the infimum is taken over all absolutely continuous curves
\(\gamma:[0,1]\to\R^k\) satisfying
\[
 \gamma(0)=\sigma_a,
 \qquad
 \gamma(1)=y.
\]
We replace \(\Phi_a\) by a fixed truncation at large target distance.  The
truncated function remains Lipschitz, separates the wells jointly with the
other \(\Phi_b\), and satisfies
\begin{equation}\label{eq:weighted-distance-gradient}
 |D\Phi_a(y)|\leq\sqrt{2V(y)}
\end{equation}
for almost every \(y\in\R^k\).

\begin{lem}[Phase compactness]
\label{lem:phase-compactness}
After passing to a subsequence, there exists a spacetime Caccioppoli
partition \((E_a)_{a=1}^q\) of \(\T^2\times(0,T)\) such that
\begin{equation}\label{eq:phase-convergence-repeated}
 u_j
 \longrightarrow
 u_0:=\sum_{a=1}^q\sigma_a\mathbf1_{E_a}
 \qquad\text{in }L^1(\T^2\times(0,T);\R^k).
\end{equation}
\end{lem}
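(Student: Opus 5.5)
The plan is the Modica--Mortola/Baldo argument in spacetime, carried out on the scalar functions $w_{j,a}:=\Phi_a\circ u_j$ for $a=1,\dots,q$. We show that these are bounded in $BV(\T^2\times(0,T))$, extract $L^1$ limits, and then use that $V(u_j)\to0$ in $L^1$ to force the limit of $u_j$ onto the well set.

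\textbf{Step 1 (spatial gradients).} By \eqref{eq:weighted-distance-gradient} and the chain rule for Lipschitz functions composed with smooth maps, pointwise
\[
|\nabla_x w_{j,a}|\le\sqrt{2V(u_j)}\,|\nabla u_j|\le \tfrac{\eps_j}{2}|\nabla u_j|^2+\tfrac1{\eps_j}V(u_j)=e_{\eps_j}(u_j).
\]
Integrating and applying \eqref{eq:global-dissipation} gives $\int_0^T\!\int|\nabla_x w_{j,a}|\le TM_0$.

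\textbf{Step 2 (time derivatives).} Similarly, by Cauchy--Schwarz,
\[
|\partial_t w_{j,a}|\le\sqrt{2V(u_j)}\,|\partial_t u_j|\le \tfrac{1}{\eps_j}V(u_j)+\tfrac{\eps_j}{2}|\partial_tu_j|^2 .
\]
Integrating in spacetime, the first term is at most $TM_0$, and the second is at most $M_0/2$ by the dissipation identity.

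\textbf{Step 3 ($L^1$ bound).} Since $\Phi_a$ is truncated, $w_{j,a}$ is uniformly bounded. Thus $(w_{j,a})_j$ is bounded in $BV(\T^2\times(0,T))$ for each $a$. By the compact embedding $BV\hookrightarrow L^1$ on the bounded Lipschitz domain $\T^2\times(0,T)$, we extract a subsequence with $w_{j,a}\to w_a$ in $L^1$ and almost everywhere, for all $a$ simultaneously. The limits $w_a$ lie in $BV$.

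\textbf{Step 4 (identifying the limit).} Here we expect the main obstacle: no uniform $L^\infty$ bound holds near $t=0$, so strong convergence of $u_j$ itself has to be recovered from that of the $w_{j,a}$. First, $\int_0^T\!\int V(u_j)\le \eps_j TM_0\to0$, so along a further subsequence $V(u_j)\to0$ almost everywhere. By Assumption~\ref{assum:potential}(iii), $V\to\infty$ at infinity, so at almost every point $\dist(u_j,\Sigma)\to0$. Fix such a point. Eventually $u_j$ lies in one of the disjoint neighbourhoods $B_\mu(\sigma_b)$, and $\Phi_b(u_j)\to0$ while $\Phi_c(u_j)$ stays bounded below for $c\ne b$; this is where the joint well-separation of the truncated $\Phi$'s enters. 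Since $(w_a)$ converges there, the index $b$ is eventually constant. Consequently $u_j\to\sigma_b$ at that point, and $\{w_b=0\}$ identifies the phase. Setting $E_a:=\{w_a=0\}$ up to null sets gives a partition and pointwise a.e. convergence $u_j\to u_0=\sum_a\sigma_a\mathbf 1_{E_a}$.

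\textbf{Step 5 ($L^1$ convergence of $u_j$).} To upgrade to $L^1$ convergence we use uniform integrability. The growth bound $V(y)\ge c_0|y|^2-C_0$ gives
\[
\int|u_j|^2\le C\Big(1+\int V(u_j)\Big)\le C(1+\eps_jTM_0),
\]
so $(u_j)$ is bounded in $L^2$. Vitali's theorem then yields convergence in $L^1(\T^2\times(0,T))$.

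\textbf{Step 6 (Caccioppoli property).} Each $\mathbf 1_{E_a}$ is a function of the $BV$ vector $(w_1,\dots,w_q)$, which takes finitely many values almost everywhere. One way to see that $\mathbf 1_{E_a}$ has finite perimeter is to note that $w_a$ is a.e. equal to $\sum_b d_{ab}\mathbf 1_{E_b}$ with $d_{ab}=\Phi_a(\sigma_b)$ and $d_{aa}=0$. Alternatively, compose $w_a$ with a Lipschitz function $g$ that equals $1$ at $0$ and $0$ on $[\min_{b\ne a}d_{ab},\infty)$. Then $\mathbf 1_{E_a}=g(w_a)\in BV$, so each $E_a$ is a Caccioppoli set and $(E_a)$ is a spacetime Caccioppoli partition.
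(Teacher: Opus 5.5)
Your proposal is correct and follows essentially the same route as the paper: spacetime $BV$ bounds for the truncated weighted distances $\Phi_a\circ u_j$ (the spatial part via the Modica--Mortola inequality, the time part via the dissipation identity), $BV$ compactness, the vanishing of $\int_0^T\int_{\T^2}V(u_j)$ to force the limit onto $\Sigma$, and the $L^2$ bound from the coercivity of $V$ to upgrade to $L^1$ convergence of $u_j$. The differences are minor. You bound $|\partial_t(\Phi_a\circ u_j)|$ by pointwise Young rather than spacetime Cauchy--Schwarz, you identify the limit pointwise almost everywhere rather than in measure, and you obtain the finite-perimeter property by composing $w_a$ with a Lipschitz cutoff; this last step is a concrete instance of the finite-valued $BV$ structure fact that the paper cites.
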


\begin{proof}
By the chain rule and \eqref{eq:weighted-distance-gradient},
\[
 |\nabla_x\Phi_a(u_j)|
 \leq
 \sqrt{2V(u_j)}\,|\nabla u_j|.
\]
Young's inequality gives the pointwise estimate
\[
 \sqrt{2V(u_j)}\,|\nabla u_j|
 \leq
 \frac{\eps_j}{2}|\nabla u_j|^2
 +\frac1{\eps_j}V(u_j)
 =
 e_{\eps_j}(u_j).
\]
Therefore
\begin{align}
 \int_0^T\int_{\T^2}
 |\nabla_x\Phi_a(u_j)|\,\dd x\dd t
 &\leq
 \int_0^T
 E_{\eps_j}(u_j(\cdot,t);\T^2)\,\dd t
 \nonumber\\
 &\leq
 TM_0.
 \label{eq:phase-spatial-BV}
\end{align}

Similarly,
\[
 |\partial_t\Phi_a(u_j)|
 \leq
 \sqrt{2V(u_j)}\,|\partial_tu_j|.
\]
Cauchy--Schwarz in spacetime and
\eqref{eq:global-dissipation} yield
\begin{align}
 &\int_0^T\int_{\T^2}
 |\partial_t\Phi_a(u_j)|\,\dd x\dd t
 \nonumber\\
 &\qquad\leq
 \left(
 \int_0^T\int_{\T^2}
 \frac{2V(u_j)}{\eps_j}\,\dd x\dd t
 \right)^{\frac12}
 \left(
 \int_0^T\int_{\T^2}
 \eps_j|\partial_tu_j|^2\,\dd x\dd t
 \right)^{\frac12}
 \nonumber\\
 &\qquad\leq
 \sqrt{2T}\,M_0.
 \label{eq:phase-time-BV}
\end{align}
The truncation gives a uniform \(L^\infty\) bound for
\(\Phi_a(u_j)\).  Thus \eqref{eq:phase-spatial-BV} and
\eqref{eq:phase-time-BV} show that
\[
 \Phi_a(u_j)
\]
is uniformly bounded in
\[
 BV(\T^2\times(0,T))
\]
for every \(a\).  By \(BV\) compactness and a diagonal extraction,
\[
 \Phi_a(u_j)\to\varphi_a
 \qquad\text{in }L^1
\]
for every \(a\).

Moreover,
\begin{align}
 \int_0^T\int_{\T^2}V(u_j)\,\dd x\dd t
 &\leq
 \eps_j
 \int_0^T
 E_{\eps_j}(u_j(\cdot,t);\T^2)\,\dd t
 \nonumber\\
 &\leq
 \eps_jTM_0
 \longrightarrow0.
 \label{eq:phase-potential-vanishing}
\end{align}
Hence
\[
 \dist(u_j,\Sigma)\to0
\]
in measure.  The coercive lower bound
\[
 V(y)\geq c|y|^2-C
\]
also gives a uniform spacetime \(L^2\) bound for \(u_j\), so the sequence is
uniformly integrable in \(L^1\).

The vector-valued map
\[
 \Phi:=(\Phi_1,\ldots,\Phi_q)
\]
takes distinct values at distinct wells: indeed,
\(\Phi_a(\sigma_a)=0\), whereas
\(\Phi_a(\sigma_b)>0\) for \(b\neq a\).  Therefore the \(L^1\) limits
\((\varphi_a)_a\), together with
\eqref{eq:phase-potential-vanishing}, determine a measurable well-valued
map
\[
 u_0(x,t)\in\Sigma
 \qquad\text{for almost every }(x,t).
\]
Define
\[
 E_a:=\{(x,t):u_0(x,t)=\sigma_a\}.
\]
Then
\[
 u_0=\sum_{a=1}^q\sigma_a\mathbf1_{E_a}.
\]
Since the finite-valued map \(\Phi(u_0)\) belongs to \(BV\) and its values
are distinct, the standard structure theorem for finite-valued \(BV\)
maps implies that each \(E_a\) has locally finite perimeter and that
\((E_a)_{a=1}^q\) is a Caccioppoli partition.

Finally, convergence in measure to \(u_0\), combined with uniform
\(L^1\)-integrability, gives
\[
 u_j\to u_0
 \qquad\text{in }L^1,
\]
which proves \eqref{eq:phase-convergence-repeated}.
\end{proof}

\begin{rem}[Visible and hidden interfaces]
The partition \((E_a)\) records the interfaces visible in the limiting
phase map.  The support of the limiting diffuse energy may be larger than
the reduced boundaries of this partition: multiple transition layers can
collapse onto the same curve, and vectorial pseudo-profiles can carry
energy without changing the limiting well values.  For this reason, the
remainder of the paper studies the full-energy measures \(\nu_t\), rather
than identifying them a priori with the perimeter measure of the
partition.
\end{rem}

\subsection{Compactness of the time-dependent energy measures}

The next step is stronger than ordinary spacetime weak-star compactness:
we construct a limiting energy measure at almost every individual time.

\begin{lem}[Time-slice compactness of the energy]
\label{lem:energy-time-compactness}
After passing to a subsequence, there exist a weakly measurable family of
finite Radon measures
\[
 (\nu_t)_{t\in[0,T]}
\]
and a common full-measure set
\[
 \mathcal C\subset(0,T)
\]
such that
\begin{equation}\label{eq:energy-slice-convergence-repeated}
 \nu_j^t\stackrel{*}{\rightharpoonup}\nu_t
 \qquad\text{for every }t\in\mathcal C.
\end{equation}
After choosing right-continuous \(BV\) representatives,
\(\mathcal C\) may be taken to be the complement of a countable set.
Moreover,
\begin{equation}\label{eq:energy-spacetime-convergence}
 \nu_j^t\,\dd t
 \stackrel{*}{\rightharpoonup}
 \nu_t\,\dd t
 \qquad\text{on }\T^2\times(0,T).
\end{equation}
\end{lem}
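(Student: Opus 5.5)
The plan is the standard Ilmanen/Brakke argument: for each fixed test function the localized energy is a function of bounded variation in time, uniformly in \(j\), and Helly's selection theorem is then applied along a countable dense family of test functions.

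\textbf{Step 1: uniform BV bound.} Fix \(\phi\in C^2(\T^2)\) and set
\[
 f_j^\phi(t):=\int_{\T^2}\phi\,\dd\nu_j^t .
\]
By the local balance \eqref{eq:diffuse-local-energy},
\[
 \frac{\dd}{\dd t}f_j^\phi(t)
 =-\int\nabla\phi\cdot q_j^t\,\dd x-\int\phi\,\alpha_j^t\,\dd x .
\]
Cauchy--Schwarz gives
\[
 \left|\int\nabla\phi\cdot q_j^t\right|
 \le \|\nabla\phi\|_\infty\Bigl(\int\eps_j|\partial_tu_j|^2\Bigr)^{1/2}\bigl(2E_{\eps_j}(u_j(t))\bigr)^{1/2},
\]
and the dissipation term is bounded by \(\|\phi\|_\infty\int\alpha_j^t\). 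Integrating in time and using \eqref{eq:global-dissipation}, we get
\[
 \int_0^T\Bigl|\frac{\dd}{\dd t}f_j^\phi\Bigr|\,\dd t
 \le C\bigl(\|\phi\|_{C^1}\bigr)\bigl(M_0+\sqrt{T}M_0\bigr).
\]
Also \(|f_j^\phi|\le\|\phi\|_\infty M_0\). So \((f_j^\phi)_j\) is bounded in \(BV(0,T)\), uniformly in \(j\). It would be cleaner to avoid the square root and use that \(t\mapsto f_j^\phi(t)+C_\phi t\) is almost monotone, but the total-variation bound is enough for Helly.

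\textbf{Step 2: diagonal Helly selection.} Take a countable set \(\{\phi_k\}\subset C^2(\T^2)\) that is dense in \(C(\T^2)\), and include \(\phi\equiv1\). Helly's theorem and a diagonal extraction give a single subsequence with \(f_j^{\phi_k}(t)\to f^{\phi_k}(t)\) for every \(t\in[0,T]\) and every \(k\). Each limit is BV, so it has at most countably many jumps. Replace each \(f^{\phi_k}\) by its right-continuous representative and let \(\mathcal C\) be \((0,T)\) minus the union of the jump sets. This set is co-countable, and on it pointwise convergence holds for all \(k\).

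For \(t\in\mathcal C\) the masses \(\nu_j^t(\T^2)\le M_0\) are bounded. Any weak-star limit point of \((\nu_j^t)_j\) must integrate each \(\phi_k\) to \(f^{\phi_k}(t)\), and by density it is therefore unique. This defines \(\nu_t\) and gives \eqref{eq:energy-slice-convergence-repeated}. For the remaining \(t\) (countably many), define \(\nu_t\) through the right-continuous representatives, or as any limit point. Weak measurability of \(t\mapsto\int\phi\,\dd\nu_t\) follows because this map is a pointwise a.e.\ limit of measurable functions for \(\phi=\phi_k\), and then for all \(\phi\) by uniform approximation.

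\textbf{Step 3: spacetime convergence.} For \(\psi\in C_c(\T^2\times(0,T))\) we need
\[
 \int_0^T\!\!\int\psi(\cdot,t)\,\dd\nu_j^t\,\dd t\to\int_0^T\!\!\int\psi(\cdot,t)\,\dd\nu_t\,\dd t .
\]
First treat \(\psi\) uniformly close to a finite sum \(\sum_k\theta_k(t)\phi_k(x)\). For such a sum the convergence follows by dominated convergence in \(t\), since the integrands converge on \(\mathcal C\) and are bounded by \(\|\phi_k\|_\infty M_0\). The general case then follows from the uniform mass bound \(M_0T\).

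\textbf{Main obstacle.} The only delicate point is Step 1: the flux term must be controlled by the spacetime dissipation rather than pointwise in time, and the test function must be restricted to \(C^1\) so that the flux can be absorbed. The rest is routine measure theory.
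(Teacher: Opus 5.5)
Your proposal is correct and follows the same route as the paper. Both arguments proceed as follows:
\begin{enumerate}
\item a uniform-in-\(j\) bound on the time variation of \(\nu_j^t(\phi)\), obtained from the local energy identity with Cauchy--Schwarz against the spacetime dissipation;
\item Helly's theorem with a diagonal extraction over a countable dense family of \(C^1\) test functions containing the constant \(1\);
\item identification of the slice limits on the co-countable continuity set;
\item spacetime convergence via product test functions and dominated convergence.
\end{enumerate}
The differences are cosmetic: you identify \(\nu_t\) by uniqueness of weak-star limit points rather than by extending \(\phi_\ell\mapsto f_\ell(t)\) to a positive functional and applying Riesz, and you use the everywhere-pointwise form of Helly's theorem; neither affects the argument.
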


\begin{proof}
Fix \(\phi\in C^1(\T^2)\).  Testing
\eqref{eq:diffuse-local-energy} with \(\phi\), we obtain
\begin{align}
 \frac{\dd}{\dd t}\nu_j^t(\phi)
 &=
 -\int_{\T^2}
 \phi\eps_j|\partial_tu_j|^2\,\dd x
 \nonumber\\
 &\quad-
 \int_{\T^2}
 \eps_j\partial_tu_j\cdot
 \bigl(\nabla u_j\nabla\phi\bigr)\,\dd x.
 \label{eq:energy-slice-derivative}
\end{align}
The first term satisfies
\[
 \int_0^T
 \left|
 \int_{\T^2}
 \phi\eps_j|\partial_tu_j|^2\,\dd x
 \right|\dd t
 \leq
 M_0\|\phi\|_{L^\infty}.
\]
For the second term, Cauchy--Schwarz gives
\begin{align*}
 &\int_0^T
 \left|
 \int_{\T^2}
 \eps_j\partial_tu_j\cdot
 \bigl(\nabla u_j\nabla\phi\bigr)\,\dd x
 \right|\dd t\\
 &\qquad\leq
 \|\nabla\phi\|_{L^\infty}
 \left(
 \int_0^T\int_{\T^2}
 \eps_j|\partial_tu_j|^2\,\dd x\dd t
 \right)^{\frac12}
 \left(
 \int_0^T\int_{\T^2}
 \eps_j|\nabla u_j|^2\,\dd x\dd t
 \right)^{\frac12}\\
 &\qquad\leq
 \sqrt{2T}\,M_0\|\nabla\phi\|_{L^\infty}.
\end{align*}
Therefore
\begin{equation}\label{eq:energy-time-variation}
 \operatorname{Var}_{[0,T]}
 \bigl(\nu_j^{\boldsymbol\cdot}(\phi)\bigr)
 \leq
 M_0\|\phi\|_{L^\infty}
 +
 \sqrt{2T}\,M_0\|\nabla\phi\|_{L^\infty}.
\end{equation}

Choose a countable set
\[
 \{\phi_\ell\}_{\ell\in\mathbb N}\subset C^1(\T^2)
\]
that is dense in \(C(\T^2)\) with respect to the uniform norm and contains
the constant function \(1\).  By Helly's selection theorem and a diagonal
argument, there exists a common subsequence such that
\[
 \nu_j^{\boldsymbol\cdot}(\phi_\ell)
 \longrightarrow
 f_\ell
\]
pointwise at every continuity point of the right-continuous \(BV\)
representative of \(f_\ell\), for every \(\ell\).

Each \(f_\ell\) has at most countably many discontinuities.  Let
\[
 \mathcal C
 :=
 (0,T)\setminus
 \bigcup_{\ell=1}^\infty\operatorname{Disc}(f_\ell).
\]
Then \((0,T)\setminus\mathcal C\) is countable.  For every
\(t\in\mathcal C\), the map
\[
 \phi_\ell\longmapsto f_\ell(t)
\]
is linear and positive on the dense family and satisfies
\[
 |f_\ell(t)|
 \leq
 M_0\|\phi_\ell\|_{L^\infty}.
\]
It therefore extends uniquely to a positive bounded linear functional on
\(C(\T^2)\).  By the Riesz representation theorem, there exists a finite
Radon measure \(\nu_t\) such that
\[
 \nu_t(\phi_\ell)=f_\ell(t)
 \qquad\text{for every }\ell.
\]
The uniform mass bound
\[
 \nu_j^t(\T^2)\leq M_0
\]
extends convergence from the dense family to every
\(\phi\in C(\T^2)\), proving
\eqref{eq:energy-slice-convergence-repeated}.

The scalar functions \(t\mapsto\nu_t(\phi_\ell)\) are measurable, and
density again gives weak measurability of \(t\mapsto\nu_t\).
For a product test function
\[
 \Psi(x,t)=\phi(x)\eta(t),
\]
dominated convergence in \(t\) yields
\[
 \int_0^T\eta(t)\nu_j^t(\phi)\,\dd t
 \longrightarrow
 \int_0^T\eta(t)\nu_t(\phi)\,\dd t.
\]
Finite sums of product functions are dense in
\(C(\T^2\times[0,T])\), while the total masses are uniformly bounded.
This proves \eqref{eq:energy-spacetime-convergence}.
\end{proof}

\begin{rem}[Why Helly compactness is needed]
A weak-star limit of the spacetime measures
\(\nu_j^t\,\dd t\) determines \(\nu_t\) only for almost every time and does
not by itself give convergence of the individual slices
\(\nu_j^t\).  The \(BV\)-in-time estimate
\eqref{eq:energy-time-variation} provides a common set
\(\mathcal C\) on which the time-slice convergence is valid.  This makes it
possible to apply the elliptic theorem at a fixed time without changing the
already determined full-energy limit \(\nu_t\).
\end{rem}

\subsection{Almost-every-time rectifiability}

We now use the parabolic dissipation to verify the forcing hypothesis in
Theorem~\ref{thm:forced-bethuel} at almost every time.

\begin{prop}[Almost-every-time rectifiability]
\label{prop:time-slice-rectifiability}
For almost every \(t\in(0,T)\), there exist a relatively closed countably
\(1\)-rectifiable set \(S_t\subset\T^2\) and a positive density
\(e_t\in L^1(\cH^1\llcorner S_t)\) such that
\begin{equation}\label{eq:time-slice-full-energy}
 \nu_t=e_t\,\cH^1\llcorner S_t.
\end{equation}
Moreover, \(\nu_t\) has positive lower one-dimensional density at
\(\nu_t\)-almost every point.
\end{prop}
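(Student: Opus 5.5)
The plan is to apply Theorem~\ref{thm:forced-bethuel} on each good time slice, with $U$ a periodic lift of $\T^2$ (or a finite cover of $\T^2$ by coordinate disks), to the maps $v_j:=u_j(\cdot,t)$ with forcing $F_j:=-\eps_j\partial_tu_j(\cdot,t)$. The only difficulty is that the subsequence produced by that theorem must reproduce the already fixed limit $\nu_t$.

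\textbf{Choice of good times.} Fix $\tau>0$ and apply Fatou's lemma to the spacetime dissipation bound from \eqref{eq:global-dissipation}:
\[
 \int_\tau^T\liminf_{j\to\infty}\int_{\T^2}\eps_j|\partial_tu_j(\cdot,t)|^2\,\dd x\,\dd t\leq M_0 .
\]
Hence the set
\[
 \mathcal G_\tau:=\Bigl\{t\in\mathcal C\cap(\tau,T):\Lambda_t:=\liminf_{j}\int_{\T^2}\eps_j|\partial_tu_j(\cdot,t)|^2\,\dd x<\infty\Bigr\}
\]
has full measure in $(\tau,T)$, where $\mathcal C$ is the set from Lemma~\ref{lem:energy-time-compactness}.

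\textbf{Application on a fixed slice.} Fix $t\in\mathcal G_\tau$ and pass to a $t$-dependent subsequence $j_k$ along which the action is at most $\Lambda_t+1$. All hypotheses \eqref{eq:forced-sequence-bounds} then hold for $k$ large:
\begin{itemize}
 \item the energy bound $E\leq M_0$ follows from \eqref{eq:global-dissipation};
 \item the action bound is $\Lambda_t+1$ by the choice of subsequence;
 \item the uniform bound $L=L_\tau$ is given by Lemma~\ref{lem:positive-time-bounds}.
\end{itemize}
Smoothness of $u_j$ gives $v_j\in C^2$ and $F_j\in L^2$.

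Theorem~\ref{thm:forced-bethuel} yields a further subsequence along which $\nu_{j_k}^t$ converges to a measure that is $e\,\cH^1\llcorner S$ with $S$ relatively closed and countably $1$-rectifiable. Since $t\in\mathcal C$, the full sequence already satisfies $\nu_j^t\stackrel{*}{\rightharpoonup}\nu_t$, so this subsequential limit is $\nu_t$ itself. This is the key point of the argument: the time-dependent extraction is harmless because the energy slice limit is independent of the subsequence.

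Set $S_t:=\operatorname{spt}\nu_t$ and $e_t:=e$. Then $e_t\in L^1(\cH^1\llcorner S_t)$ because $\nu_t(\T^2)\leq M_0$, and $e_t>0$ holds $\cH^1$-a.e.\ on $S_t$ because $S_t$ is the support and, by Step~7 of that proof, $\cH^1(S\setminus S_0)=0$. The positive lower density of $\nu_t$ at every point of $S_t$, and so $\nu_t$-a.e., is Step~2 of the same proof, \eqref{eq:forced-lower-density}, applied with the constants $\Lambda_t+1$ and $L_\tau$.

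\textbf{Localization on the torus.} The forced theorem is stated on open subsets of $\R^2$, so work with the $\Z^2$-periodic lifts on $U=\R^2$. Their energy is only locally bounded, but the proof of Theorem~\ref{thm:forced-bethuel} is local and uses only bounds on compact sets. Alternatively, cover $\T^2$ by finitely many disks and patch the resulting carriers; supports and rectifiability are local properties, so the patching is consistent.

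Finally, take $\tau=1/m$ and the union $\bigcup_m\mathcal G_{1/m}$, which is of full measure in $(0,T)$.

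The main obstacle I expect is exactly the subsequence compatibility. It is resolved by $t\in\mathcal C$ for the measure $\nu_t$ itself, but this resolution does not extend to $\zeta_t$ and $\mu_t$. That is why the proposition asserts only the statement for $\nu_t$, leaving the other measures to the spacetime disintegration later.
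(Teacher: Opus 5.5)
Your proposal is correct and follows the paper's proof essentially step by step. The shared steps are: Fatou's lemma to get finite slice action at a.e.\ time; a $t$-dependent subsequence fed into Theorem~\ref{thm:forced-bethuel} with the positive-time bound $L_\tau$ on a finite cover of $\T^2$ by disks; identification of the subsequential limit with the fixed $\nu_t$ via $t\in\mathcal C$; and a countable union over $\tau\downarrow0$. Your explicit appeal to Steps~2 and~7 of that theorem's proof, for the lower density bound and for $e_t>0$ on $S_t=\operatorname{spt}\nu_t$, only spells out what the paper leaves implicit.
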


\begin{proof}
Set
\[
 D_j(t)
 :=
 \int_{\T^2}
 \eps_j|\partial_tu_j(x,t)|^2\,\dd x.
\]
By \eqref{eq:global-dissipation},
\[
 \int_0^TD_j(t)\,\dd t\leq M_0.
\]
Fatou's lemma gives
\[
 \int_0^T\liminf_{j\to\infty}D_j(t)\,\dd t
 \leq
 \liminf_{j\to\infty}\int_0^TD_j(t)\,\dd t
 \leq M_0.
\]
Consequently,
\begin{equation}\label{eq:finite-slice-dissipation}
 \liminf_{j\to\infty}D_j(t)<\infty
\end{equation}
for almost every \(t\in(0,T)\).

Fix a time \(t\) satisfying
\eqref{eq:finite-slice-dissipation}, with \(t\in\mathcal C\) and
\(t\geq\tau>0\).  Choose a subsubsequence, depending on \(t\), such that
\[
 D_j(t)\leq\Lambda(t)<\infty.
\]
At this fixed time, \(v_j:=u_j(\cdot,t)\) solves
\begin{equation}\label{eq:time-slice-forced-equation}
 -\eps_j\Delta v_j+\frac1{\eps_j}DV(v_j)=F_j,
 \qquad
 F_j:=-\eps_j\partial_tu_j(\cdot,t).
\end{equation}
The forcing action is
\begin{equation}\label{eq:time-slice-forced-action}
 \int_{\T^2}\frac{|F_j|^2}{\eps_j}\,\dd x
 =
 \int_{\T^2}
 \eps_j|\partial_tu_j(\cdot,t)|^2\,\dd x
 =
 D_j(t)
 \leq\Lambda(t).
\end{equation}
The forced theorem uses the momentum density \(F_j\cdot\nabla v_j\),
which equals the negative of the parabolic flux
\(q_j^{\rm par}=\eps_j\partial_tu_j\cdot\nabla u_j\).
Accordingly, all stress identities below use the convention
\(\operatorname{div}T_j=-q_j^{\rm par}\).
The global energy bound gives
\[
 E_{\eps_j}(v_j;\T^2)\leq M_0,
\]
and Lemma~\ref{lem:positive-time-bounds} gives
\[
 \|v_j\|_{L^\infty(\T^2)}
 +\eps_j\|\nabla v_j\|_{L^\infty(\T^2)}
 \leq L_\tau.
\]
Thus the hypotheses of
Theorem~\ref{thm:forced-bethuel} hold locally in every coordinate disk of
\(\T^2\).  A finite covering by such disks and uniqueness of the measure
\(\nu_t\) yield a relatively closed countably \(1\)-rectifiable carrier
\(S_t\).

Although the subsubsequence depends on \(t\), the full-energy measures
already satisfy
\[
 \nu_j^t\stackrel{*}{\rightharpoonup}\nu_t
\]
along the original common subsequence because \(t\in\mathcal C\).  Hence
the full-energy limit produced by the forced elliptic theorem is necessarily
the previously fixed measure \(\nu_t\).  Theorem~\ref{thm:forced-bethuel}
therefore gives \eqref{eq:time-slice-full-energy} and the positive lower
density assertion.

Finally, choose a sequence \(\tau_m\downarrow0\).  The preceding conclusion
holds for almost every \(t\in(\tau_m,T)\) for every \(m\), and hence for
almost every \(t\in(0,T)\).
\end{proof}

\begin{rem}[Why only rectifiability is taken from the slice theorem]
At a fixed time, Theorem~\ref{thm:forced-bethuel} also produces subsequential
limits of the potential and gradient-tensor measures.  However, the
subsubsequence used above depends on \(t\).  Those fixed-time limits cannot
yet be identified with the time slices of a single spacetime limit.
Accordingly, Proposition~\ref{prop:time-slice-rectifiability} uses only the
full-energy conclusion, whose limit \(\nu_t\) was fixed in advance.  The
potential and gradient measures are identified below by spacetime
disintegration.
\end{rem}

\subsection{Spacetime disintegration and comparison of the energies}

\begin{prop}[Spacetime disintegration and energy comparison]
\label{prop:spacetime-disintegration}
After passing to a further subsequence, there exist weakly measurable
families of spatial Radon measures
\[
 (\zeta_t)_{t\in(0,T)},
 \qquad
 (\mu_{ab,t})_{t\in(0,T)}
\]
such that
\begin{equation}\label{eq:spacetime-zeta-mu-convergence}
 \zeta_j^t\,\dd t
 \stackrel{*}{\rightharpoonup}
 \zeta_t\,\dd t,
 \qquad
 \mu_{j,ab}^t\,\dd t
 \stackrel{*}{\rightharpoonup}
 \mu_{ab,t}\,\dd t.
\end{equation}
For every \(0<\tau<T\), there exists
\(C_\tau<\infty\) such that
\begin{equation}\label{eq:spacetime-comparison}
 \zeta_t\leq\nu_t\leq C_\tau\zeta_t
 \qquad\text{for almost every }t\in(\tau,T).
\end{equation}
Consequently, for almost every \(t\),
\[
 \zeta_t=\Theta_t\,\cH^1\llcorner S_t,
 \qquad
 \mu_{ab,t}=m_{ab,t}\,\cH^1\llcorner S_t
\]
for suitable densities \(\Theta_t>0\) and \(m_{ab,t}\).
\end{prop}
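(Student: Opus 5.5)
We first extract a further subsequence along which the spacetime measures converge. By Lemma~\ref{lem:diffuse-balances}, the energy at each time is bounded by $M_0$. Since $0\le\zeta_j^t\le\nu_j^t$ and $|\mu_{j,ab}^t|\le2\nu_j^t$, the measures $\zeta_j^t\,\dd t$ and $\mu_{j,ab}^t\,\dd t$ have total mass at most $2TM_0$ on $\T^2\times(0,T)$. Weak-star compactness then gives limits $\zeta$ and $\mu_{ab}$. The limits inherit the dominations $0\le\zeta\le\nu_t\,\dd t$ and $|\mu_{ab}|\le2\nu_t\,\dd t$, using Lemma~\ref{lem:energy-time-compactness} for the identification of the limit of $\nu_j^t\,\dd t$. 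In particular, the time marginals of $\zeta$ and $\mu_{ab}$ are absolutely continuous with respect to $\dd t$. The disintegration theorem then yields weakly measurable families $\zeta_t$ and $\mu_{ab,t}$ with $\zeta=\zeta_t\,\dd t$ and $\mu_{ab}=\mu_{ab,t}\,\dd t$. Applying this slice-wise to the dominations gives $\zeta_t\le\nu_t$ and $|\mu_{ab,t}|\le2\nu_t$ for a.e.\ $t$.

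For the reverse bound we integrate the scaled comparison \eqref{eq:forced-energy-potential-scaled} in time. Fix $\tau>0$ and set $L=L_\tau$ from Lemma~\ref{lem:positive-time-bounds}. Take a nonnegative $\eta\in C_c((\tau,T))$, a point $x$ and a radius $r$. At each time, apply \eqref{eq:forced-energy-potential-scaled} to $v=u_j(\cdot,t)$ with $F=-\eps_j\partial_tu_j(\cdot,t)$. We multiply by $\eta(t)$ and integrate; no slice-wise subsequence is needed. This gives
\[
\int\eta\,\nu_j^t(B_{r/2}(x))\,\dd t\le C(V,L_\tau)\Big\{\int\eta\,\zeta_j^t(B_{3r/4}(x))\,\dd t+\frac{\eps_j}{r}TM_0\|\eta\|_\infty+\eps_j^2M_0\|\eta\|_\infty\Big\}.
\]
The key point is that the forcing error is controlled by the spacetime dissipation bound. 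For radii avoiding the countably many charged spheres, the error terms vanish as $j\to\infty$. Passing to the limit and then varying $\eta$ gives, for a.e.\ $t\in(\tau,T)$,
\[
\nu_t(B_{r/2}(x))\le C_\tau\,\zeta_t(B_{3r/4}(x)).
\]
To make the exceptional null set of times independent of $x$ and $r$, we restrict to a countable dense set of centers and radii. Monotonicity in $r$, combined with open/closed ball approximation, then extends the inequality to all centers and radii.

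Fix a good $t$ with $t\in\mathcal C$, at which Proposition~\ref{prop:time-slice-rectifiability} applies. We then run the differentiation argument of Step~4 of Theorem~\ref{thm:forced-bethuel}, using the pointwise doubling property of $\nu_t$ along radii $r_\ell\downarrow0$; this gives $\nu_t\ll\zeta_t$. Now $\nu_t=e_t\,\cH^1\llcorner S_t$, and $\zeta_t\le\nu_t$ forces $\zeta_t=\Theta_t\,\cH^1\llcorner S_t$. At common $\cH^1$-density points of $S_t$, $e_t$ and $\Theta_t$, the ball inequality gives $e_t\le C\Theta_t$, where the constant absorbs the radius ratios. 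This yields $\nu_t\le C_\tau\zeta_t$ and $\Theta_t>0$ $\cH^1$-a.e.\ on $S_t$. Since $|\mu_{ab,t}|\le2\nu_t$, the Radon--Nikodym theorem gives $\mu_{ab,t}=m_{ab,t}\,\cH^1\llcorner S_t$. Finally, letting $\tau_m\downarrow0$ extends all conclusions to a.e.\ $t\in(0,T)$.

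The main obstacle is the measure-theoretic passage from spacetime inequalities to slice inequalities that hold for all balls at a.e.\ time. This requires choosing the countable family of balls carefully, and ensuring that the radii where $\nu_t$ or $\zeta_t$ charge spheres can be avoided uniformly, for example by working with the spacetime measure of $\partial B_s(x)\times(0,T)$. Second in difficulty is checking that the doubling and differentiation argument only needs the inequality along a sequence of radii, which is exactly what the dense family provides.
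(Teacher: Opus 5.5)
Your proposal is correct and essentially reproduces the paper's proof. It uses the same further extraction and time disintegration, the same time-integrated scaled comparison with the forcing error absorbed by the spacetime dissipation bound, the same countable family of balls extended to all balls by monotonicity, and a slice-wise differentiation at rectifiable points.

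The one deviation is your preliminary doubling argument for $\nu_t\ll\zeta_t$, which is harmless but redundant here. Since $\nu_t=e_t\,\cH^1\llcorner S_t$ is already rectifiable, the density-point comparison alone yields $e_t\le C\Theta_t$. The paper phrases this as $f_t=\dd\zeta_t/\dd\nu_t\ge 2/(3C_\tau)$, and it gives both the absolute continuity and the quantitative bound at once.
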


\begin{proof}
The diffuse inequalities
\[
 0\leq\zeta_j^t\leq\nu_j^t,
 \qquad
 |\mu_{j,ab}^t|\leq2\nu_j^t
\]
give uniform bounds for the corresponding spacetime measures.  After
extraction,
\[
 \zeta_j^t\,\dd t\stackrel{*}{\rightharpoonup}\bar\zeta,
 \qquad
 \mu_{j,ab}^t\,\dd t\stackrel{*}{\rightharpoonup}\bar\mu_{ab}.
\]
Passing the domination inequalities to the limit gives
\[
 0\leq\bar\zeta\leq\nu_t\,\dd t,
 \qquad
 |\bar\mu_{ab}|\leq2\nu_t\,\dd t.
\]
In particular, \(\bar\zeta\) and \(\bar\mu_{ab}\) are absolutely continuous
with respect to a measure that is itself absolutely continuous in the time
variable.  The disintegration theorem therefore gives weakly measurable
families \(\zeta_t\) and \(\mu_{ab,t}\) such that
\[
 \bar\zeta=\zeta_t\,\dd t,
 \qquad
 \bar\mu_{ab}=\mu_{ab,t}\,\dd t.
\]
Moreover,
\begin{equation}\label{eq:slice-dominations}
 0\leq\zeta_t\leq\nu_t,
 \qquad
 |\mu_{ab,t}|\leq2\nu_t
\end{equation}
for almost every \(t\).

It remains to prove the reverse comparison
\(\nu_t\leq C_\tau\zeta_t\).  Fix \(0<\tau<T\), and choose a spatial ball
\(B_r(x)\subset\T^2\) with \(r\) smaller than the injectivity radius.
At every \(t\in(\tau,T)\), the positive-time bound
\eqref{eq:positive-time-bounds} permits us to apply the scaled
energy--potential comparison
\eqref{eq:forced-energy-potential-scaled} to
\(v=u_j(\cdot,t)\) and
\[
 F=-\eps_j\partial_tu_j(\cdot,t).
\]
Thus
\begin{align}
 \nu_j^t\left(B_{\frac r2}(x)\right)
 \leq C_\tau\bigg\{&
 \zeta_j^t\left(B_{\frac{3r}{4}}(x)\right)
 \nonumber\\
 &+\frac{\eps_j}{r}
 \nu_j^t\left(
 B_{\frac{3r}{4}}(x)\setminus B_{\frac r2}(x)
 \right)
 \nonumber\\
 &+\eps_j^2
 \int_{B_r(x)}
 \eps_j|\partial_tu_j|^2\,\dd x
 \bigg\}.
 \label{eq:parabolic-slice-comparison}
\end{align}
Let \(\eta\in C_c((\tau,T))\) be non-negative.  Multiply
\eqref{eq:parabolic-slice-comparison} by \(\eta(t)\) and integrate in time.
The annular error is bounded by
\[
 \frac{C_\tau\eps_j}{r}
 \|\eta\|_{L^\infty}
 \int_\tau^T\nu_j^t(\T^2)\,\dd t
 \leq
 \frac{C_\tau\eps_j}{r}TM_0\|\eta\|_{L^\infty},
\]
which tends to zero.  The forcing error is bounded by
\begin{align*}
 &C_\tau\eps_j^2
 \|\eta\|_{L^\infty}
 \int_0^T\int_{\T^2}
 \eps_j|\partial_tu_j|^2\,\dd x\dd t\\
 &\qquad\leq
 C_\tau\eps_j^2M_0\|\eta\|_{L^\infty}
 \longrightarrow0.
\end{align*}

Choose a countable dense set of centers and radii for which all relevant
boundary circles have zero mass for the limiting spacetime measures.  After
passing to the spacetime limits, we obtain
\[
 \int_\tau^T
 \eta(t)\nu_t\left(B_{\frac r2}(x)\right)\dd t
 \leq
 C_\tau
 \int_\tau^T
 \eta(t)\zeta_t\left(B_{\frac{3r}{4}}(x)\right)\dd t.
\]
Since \(\eta\geq0\) is arbitrary,
\begin{equation}\label{eq:parabolic-expanded-ball-comparison}
 \nu_t\left(B_{\frac r2}(x)\right)
 \leq
 C_\tau
 \zeta_t\left(B_{\frac{3r}{4}}(x)\right)
\end{equation}
for almost every \(t\in(\tau,T)\), simultaneously for every ball in the
chosen countable family.  Let \(\mathcal B\) be the countable family of balls with rational
centres and radii used above.  Intersecting the corresponding full-measure
sets of times gives one set on which all inequalities in \(\mathcal B\)
hold simultaneously.  Approximate an arbitrary ball from inside and outside
by elements of \(\mathcal B\); monotonicity and continuity of Radon
measures give the same inequality whenever the relevant boundary circles
have zero mass.  The circular differentiation theorem then applies at
\(\nu_t\)-almost every rectifiable point.

Fix a time for which Proposition~\ref{prop:time-slice-rectifiability},
\eqref{eq:slice-dominations}, and
\eqref{eq:parabolic-expanded-ball-comparison} all hold.  Write
\[
 \nu_t=e_t\,\cH^1\llcorner S_t
\]
and
\[
 f_t:=\frac{\dd\zeta_t}{\dd\nu_t},
 \qquad 0\leq f_t\leq1.
\]
At \(\nu_t\)-almost every point \(x\), the set \(S_t\) has an approximate
tangent line, \(e_t\) is approximately continuous, and \(x\) is a
differentiation point for \(f_t\).  Therefore
\[
 \frac{\nu_t(B_{r/2}(x))}{\nu_t(B_r(x))}
 \longrightarrow\frac12,
\]
\[
 \frac{\nu_t(B_{3r/4}(x))}{\nu_t(B_r(x))}
 \longrightarrow\frac34,
\]
and
\[
 \frac{\zeta_t(B_{3r/4}(x))}
 {\nu_t(B_{3r/4}(x))}
 \longrightarrow f_t(x).
\]
Divide \eqref{eq:parabolic-expanded-ball-comparison} by
\(\nu_t(B_r(x))\) and let \(r\downarrow0\).  We obtain
\[
 \frac12
 \leq
 C_\tau\frac34 f_t(x).
\]
Hence
\[
 f_t(x)\geq\frac{2}{3C_\tau}
\]
for \(\nu_t\)-almost every \(x\).  Equivalently,
\[
 \nu_t\leq C'_\tau\zeta_t.
\]
After renaming the constant, this proves
\eqref{eq:spacetime-comparison}.

Since \(\nu_t\) and \(\zeta_t\) are mutually absolutely continuous and
\[
 \nu_t=e_t\,\cH^1\llcorner S_t,
\]
there exists \(\Theta_t>0\) such that
\[
 \zeta_t=\Theta_t\,\cH^1\llcorner S_t.
\]
Similarly, \(|\mu_{ab,t}|\leq2\nu_t\) gives
\[
 \mu_{ab,t}=m_{ab,t}\,\cH^1\llcorner S_t.
\]
\end{proof}

\begin{rem}[Why the comparison is proved in spacetime]
Applying Theorem~\ref{thm:forced-bethuel} separately at a fixed time would
give mutual absolute continuity only for limits along a subsequence that
may depend on that time.  Proposition~\ref{prop:spacetime-disintegration}
instead proves the comparison for the time slices of the single spacetime
limits selected in \eqref{eq:spacetime-zeta-mu-convergence}.  This
compatibility is essential for the evolution identities in the next
section.
\end{rem}

\subsection{The limiting stress and the Bethuel-type discrepancy relations}

We finally identify the direction of the limiting stress on almost every
time slice.

\begin{prop}[Limiting stress and slice-wise Bethuel-type discrepancy relations]
\label{prop:parabolic-bethuel-relations}
After passing to a further subsequence, there exists a spacetime
vector-valued Radon measure \(q\) such that
\[
 q_j\stackrel{*}{\rightharpoonup}q.
\]
Moreover,
\[
 q\ll\nu_t\,\dd t,
 \qquad
 \frac{\dd q}{\dd(\nu_t\,\dd t)}
 \in L^2(\nu_t\,\dd t;\R^2),
\]
and hence
\[
 q=q_t\,\dd t
\]
for a weakly measurable family of spatial vector measures \(q_t\).

For almost every \(t\in(0,T)\),
\begin{equation}\label{eq:parabolic-limit-stress-balance}
 \operatorname{div}_xT_t=-q_t,
 \qquad
 \operatorname{tr}T_t=2\zeta_t,
\end{equation}
where
\[
 T_t:=\nu_tI-\mu_t.
\]
Furthermore,
\begin{equation}\label{eq:parabolic-rank-one-stress}
 T_t
 =
 2\Theta_t\,\tau_t\otimes\tau_t\,
 \cH^1\llcorner S_t,
\end{equation}
where \(\tau_t\) is an approximate unit tangent to \(S_t\).

Consequently, in the approximate tangent--normal frame
\((\tau_t,n_t)\),
\begin{equation}\label{eq:parabolic-bethuel-relations}
 e_t=m_{n_tn_t,t},
 \qquad
 m_{n_t\tau_t,t}=0,
 \qquad
 2\Theta_t
 =
 m_{n_tn_t,t}-m_{\tau_t\tau_t,t}
\end{equation}
for \(\cH^1\)-almost every point of \(S_t\).  In particular,
\begin{equation}\label{eq:parabolic-rho-definition}
 \rho_t
 :=
 e_t-2\Theta_t
 =
 m_{\tau_t\tau_t,t}
 \geq0.
\end{equation}
\end{prop}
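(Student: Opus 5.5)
The plan has four steps: a uniform mass bound for the momenta, the $L^2$ density of $q$ through a spacetime Cauchy--Schwarz bound, passage to the limit in the stress identities and disintegration in time, and the rank-one structure from Lemma~\ref{lem:rank-one-stress} on almost every slice.

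\textbf{Mass bound for the momenta.} By Cauchy--Schwarz in spacetime,
\[
 |q_j|(\T^2\times(0,T))\leq\Bigl(\int\!\!\int\eps_j|\partial_tu_j|^2\Bigr)^{1/2}\Bigl(\int\!\!\int\eps_j|\nabla u_j|^2\Bigr)^{1/2}\leq\sqrt{2T}\,M_0 ,
\]
using \eqref{eq:global-dissipation}. Hence a further subsequence gives $q_j\stackrel{*}{\rightharpoonup}q$.

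\textbf{The $L^2$ density of $q$.} I copy \eqref{eq:forced-momentum-action} in spacetime. For $X\in C_c(\T^2\times(0,T);\R^2)$,
\[
 \Bigl|\int X\cdot\dd q_j\Bigr|^2\leq M_0\int X_aX_b\,\dd\mu^t_{j,ab}\dd t\leq 2M_0\int|X|^2\,\nu_j^t\dd t .
\]
Passing to the limit with \eqref{eq:energy-spacetime-convergence} gives $|\int X\cdot\dd q|^2\leq 2M_0\int|X|^2\dd(\nu_t\dd t)$. The Riesz representation in $L^2(\nu_t\dd t)$ then yields $q=b\,\nu_t\dd t$ with $b\in L^2$. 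Disintegrating, $q=q_t\dd t$ with $q_t=b(\cdot,t)\nu_t$, which is weakly measurable and finite for a.e.\ $t$ by Fubini.

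\textbf{Limiting stress identities and disintegration.} The diffuse identities are \eqref{eq:diffuse-stress-balance} and $\operatorname{tr}T_j^t=2\zeta_j^t$, exactly as in Step~5 of Theorem~\ref{thm:forced-bethuel}. I test them against $\varphi(x)\eta(t)$, pass to the spacetime limits from Proposition~\ref{prop:spacetime-disintegration} and Lemma~\ref{lem:energy-time-compactness}, and obtain
\[
 \int_0^T\eta(t)\Bigl[\int\nabla\varphi\cdot\dd T_t-\int\varphi\,\dd q_t\Bigr]\dd t=0 .
\]
Since $\eta$ is arbitrary and $\varphi$ ranges over a countable $C^1$-dense family, this gives \eqref{eq:parabolic-limit-stress-balance} for a.e.\ $t$, and the trace identity is handled the same way. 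The sign of $q$ agrees with \eqref{eq:diffuse-stress-balance}.

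\textbf{Rank-one structure on a fixed slice.} I fix $t$ in the full-measure set where four facts hold simultaneously: Proposition~\ref{prop:time-slice-rectifiability}, \eqref{eq:spacetime-comparison}, \eqref{eq:parabolic-limit-stress-balance}, and $q_t$ is a finite measure. On each coordinate disk $U$ I check the hypotheses of Lemma~\ref{lem:rank-one-stress} with $T=T_t$ and $q=-q_t$.
\begin{enumerate}
\item $T_t\perp\mathcal L^2$, because $|T_t|\leq 3\nu_t$ and $\nu_t$ is carried by $S_t$, which has $\mathcal L^2(S_t)=0$.
\item $\zeta_t\leq\nu_t$ and $\nu_t\ll\zeta_t$, by \eqref{eq:spacetime-comparison}.
\item $\nu_t$ has positive lower density, by Proposition~\ref{prop:time-slice-rectifiability}.
\end{enumerate}
The lemma then gives $T_t=2\,\tau\otimes\tau\,\zeta_t$, with $\tau$ spanning the approximate tangent line of the rectifiable carrier of $\zeta_t$. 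That carrier coincides with $S_t$ up to $\cH^1$-null sets, since $\zeta_t$ and $\nu_t$ are mutually absolutely continuous. This proves \eqref{eq:parabolic-rank-one-stress}. The relations \eqref{eq:parabolic-bethuel-relations} follow by reading off the components of $e_tI-m_t=2\Theta_t\tau_t\otimes\tau_t$, as in Step~6 of Theorem~\ref{thm:forced-bethuel}. Finally, $m_{\tau\tau}\geq0$ because $\mu^t_j\dd t$ is positive semidefinite, and positivity survives both the weak limit and the disintegration for a.e.\ $t$.

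The main obstacle is measure-theoretic bookkeeping rather than any new estimate. Several conditions must hold at one common time: the slice stress identity from disintegration, symmetry and positive semidefiniteness of $\mu_t$, and local finiteness of $q_t$. I would secure this by using countable dense families of test functions, so that every identity holds off a single null set of times.
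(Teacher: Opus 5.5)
Your proof is correct. The first three steps (the Cauchy--Schwarz mass bound, the $L^2(\nu_t\,\dd t)$ density of $q$ via Riesz, and passing the diffuse stress and trace identities to the limit with product test functions) are essentially the paper's.

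The routes diverge at the rank-one step.

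\emph{Your route.} You re-apply Lemma~\ref{lem:rank-one-stress} on each slice. This invokes De Philippis--Rindler and Allard again, requires checking all of the lemma's hypotheses (including the reverse comparison $\nu_t\le C_\tau\zeta_t$ and the lower density of $\nu_t$), and then requires matching the Allard carrier and its tangent field with $S_t$ up to $\cH^1$-null sets. You carry out each of these checks correctly.

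\emph{The paper's route.} The paper uses the fact that rectifiability of $\nu_t$ is already in hand. Since $|\mu_t|\le 2\nu_t$, one has $T_t=A_t\,\cH^1\llcorner S_t$ for a symmetric density $A_t$. At $\cH^1$-a.e.\ $x\in S_t$, meaning at approximate-tangent points that are also Lebesgue points of $A_t$, the blow-ups $r^{-1}(\eta_{x,r})_\#T_t$ converge to $A_t(x)\,\cH^1\llcorner T_xS_t$. This limit is divergence-free, because $|q_t|(B_{Rr}(x))\to|q_t|(\{x\})=0$ (here $q_t\ll\nu_t$ has no atoms). A constant matrix times $\cH^1$ on a line is divergence-free only if $A_t(x)n_t=0$, and the trace identity then fixes the coefficient as $2\Theta_t$.

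\emph{Comparison.} The paper's argument is elementary and yields $\tau_t$ directly as the approximate tangent of $S_t$, with no carrier reconciliation. For the rank-one identity itself it uses only $\zeta_t\le\nu_t$ and the absence of atoms of $q_t$, not the reverse comparison or the lower-density hypothesis. Your version is quicker to write because it reuses an existing lemma. However, it re-derives rectifiability that is already known and depends on the heavier structure theorems.
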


\begin{proof}
For every continuous compactly supported spacetime vector field
\(X=(X_1,X_2)\), Cauchy--Schwarz gives
\begin{align}
 \left|\int X\cdot\dd q_j\right|^2
 &=
 \left|
 \int_0^T\int_{\T^2}
 \eps_j\partial_tu_j\cdot
 \bigl(X_a\partial_au_j\bigr)\,\dd x\dd t
 \right|^2
 \nonumber\\
 &\leq
 \left(
 \int_0^T\int_{\T^2}
 \eps_j|\partial_tu_j|^2\,\dd x\dd t
 \right)
 \left(
 \int X_aX_b\,\dd\mu_{j,ab}^t\dd t
 \right)
 \nonumber\\
 &\leq
 M_0
 \int X_aX_b\,\dd\mu_{j,ab}^t\dd t.
 \label{eq:parabolic-action-duality}
\end{align}
In particular, the total variations of \(q_j\) are uniformly bounded.
After extraction,
\[
 q_j\stackrel{*}{\rightharpoonup}q.
\]
Passing to the limit in
\eqref{eq:parabolic-action-duality} gives
\begin{equation}\label{eq:parabolic-limit-action-duality}
 \left|\int X\cdot\dd q\right|^2
 \leq
 M_0
 \int X_aX_b\,\dd\mu_{ab,t}\dd t.
\end{equation}
Taking \(X=\varphi e_a\) and using
\[
 \mu_{aa,t}\leq2\nu_t
\]
gives
\[
 \left|\int\varphi\,\dd q_a\right|^2
 \leq
 2M_0\int\varphi^2\,\dd(\nu_t\,\dd t).
\]
The Riesz representation theorem on
\(L^2(\nu_t\,\dd t)\) therefore implies
\[
 q\ll\nu_t\,\dd t,
 \qquad
 \frac{\dd q}{\dd(\nu_t\,\dd t)}
 \in L^2(\nu_t\,\dd t;\R^2).
\]
Since \(\nu_t\,\dd t\) is absolutely continuous in the time variable,
\(q\) admits a time disintegration
\[
 q=q_t\,\dd t.
\]

Define the spacetime limiting stress by
\[
 T:=\nu_tI\,\dd t-\mu_t\,\dd t=T_t\,\dd t.
\]
Passing to the limit in
\[
 \operatorname{div}_xT_j=-q_j
\]
gives
\[
 \operatorname{div}_xT=-q
\]
in spacetime distributions.  Testing with fields of the form
\[
 X(x,t)=\eta(t)Y(x)
\]
and using the uniqueness of disintegration yields
\[
 \operatorname{div}_xT_t=-q_t
\]
for almost every \(t\).  Similarly, the exact identity
\[
 \operatorname{tr}T_j=2\zeta_j
\]
passes to the limit and gives
\[
 \operatorname{tr}T_t=2\zeta_t.
\]
This proves \eqref{eq:parabolic-limit-stress-balance}.

Fix a time \(t\) for which all the preceding conclusions hold.  Since
\[
 \nu_t=e_t\,\cH^1\llcorner S_t,
 \qquad
 \mu_{ab,t}=m_{ab,t}\,\cH^1\llcorner S_t,
\]
there exists a symmetric matrix field \(A_t\) such that
\begin{equation}\label{eq:parabolic-stress-density}
 T_t=A_t\,\cH^1\llcorner S_t.
\end{equation}
We identify \(A_t\) at a typical point \(x\in S_t\).  Choose \(x\) such
that:

\begin{enumerate}
\item \(S_t\) has an approximate tangent line \(L_x=T_xS_t\) at \(x\);

\item \(x\) is an approximate Lebesgue point of \(A_t\) and \(\Theta_t\)
with respect to \(\cH^1\llcorner S_t\);

\item \(q_t\) has no atom at \(x\).
\end{enumerate}

These properties hold for \(\cH^1\)-almost every \(x\in S_t\).  The last
one follows from
\[
 q_t\ll\nu_t
 \ll\cH^1\llcorner S_t.
\]

Let
\[
 \eta_{x,r}(y):=\frac{y-x}{r}
\]
and define the normalized blow-up
\[
 T_{t,x,r}
 :=
 \frac1r(\eta_{x,r})_\#T_t.
\]
Rectifiability and approximate continuity imply
\begin{equation}\label{eq:parabolic-stress-tangent-limit}
 T_{t,x,r}
 \stackrel{*}{\rightharpoonup}
 A_t(x)\,\cH^1\llcorner L_x
 \qquad\text{as }r\downarrow0.
\end{equation}

We next compute the divergence under this scaling.  For
\(Y\in C_c^1(\R^2;\R^2)\),
\begin{align*}
 \left\langle
 \operatorname{div}T_{t,x,r},Y
 \right\rangle
 &=
 -\frac1r
 \int
 DY\left(\frac{y-x}{r}\right):\dd T_t(y)\\
 &=
 \left\langle
 (\eta_{x,r})_\#(\operatorname{div}T_t),Y
 \right\rangle.
\end{align*}
Thus
\[
 \operatorname{div}T_{t,x,r}
 =
 -(\eta_{x,r})_\#q_t.
\]
For every fixed \(R>0\),
\[
 \left|
 (\eta_{x,r})_\#q_t
 \right|(B_R)
 =
 |q_t|(B_{Rr}(x))
 \longrightarrow
 |q_t|(\{x\})
 =0.
\]
Hence the limit in
\eqref{eq:parabolic-stress-tangent-limit} is divergence-free:
\begin{equation}\label{eq:parabolic-tangent-divergence-free}
 \operatorname{div}
 \left(
 A_t(x)\,\cH^1\llcorner L_x
 \right)
 =0.
\end{equation}

Let \(\tau_t(x)\) be a unit vector spanning \(L_x\), and let
\(n_t(x)\) be a unit normal.  We claim that
\begin{equation}\label{eq:parabolic-matrix-kernel}
 A_t(x)n_t(x)=0.
\end{equation}
Indeed, for any \(Y\in C_c^1(\R^2;\R^2)\),
\begin{align*}
 &\int_{L_x}A_t(x):DY\,\dd\cH^1\\
 &\qquad=
 \int_{L_x}
 \bigl(A_t(x)\tau_t\bigr)\cdot\partial_{\tau_t}Y
 \,\dd\cH^1
 +
 \int_{L_x}
 \bigl(A_t(x)n_t\bigr)\cdot\partial_{n_t}Y
 \,\dd\cH^1.
\end{align*}
The first integral vanishes by one-dimensional integration by parts because
\(A_t(x)\tau_t\) is constant.  Since the normal derivative
\(\partial_{n_t}Y|_{L_x}\) can be prescribed arbitrarily,
\eqref{eq:parabolic-tangent-divergence-free} forces
\eqref{eq:parabolic-matrix-kernel}.

The matrix \(A_t(x)\) is symmetric.  In the orthonormal basis
\((\tau_t,n_t)\), condition \eqref{eq:parabolic-matrix-kernel} therefore
implies
\[
 A_t(x)=c_t(x)\tau_t\otimes\tau_t
\]
for some scalar \(c_t(x)\).  Taking the trace and using
\[
 \operatorname{tr}T_t
 =
 2\Theta_t\,\cH^1\llcorner S_t
\]
gives
\[
 c_t(x)=2\Theta_t(x).
\]
Thus
\[
 A_t(x)
 =
 2\Theta_t(x)\tau_t\otimes\tau_t
\]
for \(\cH^1\)-almost every \(x\in S_t\), proving
\eqref{eq:parabolic-rank-one-stress}.

Finally,
\[
 T_t=\nu_tI-\mu_t
\]
and \eqref{eq:parabolic-rank-one-stress} imply, at the level of densities,
\[
 e_tI-m_t
 =
 2\Theta_t\tau_t\otimes\tau_t.
\]
Taking the \(n_t\otimes n_t\), \(n_t\otimes\tau_t\), and
\(\tau_t\otimes\tau_t\) components gives, respectively,
\[
 e_t=m_{n_tn_t,t},
\]
\[
 m_{n_t\tau_t,t}=0,
\]
and
\[
 e_t-m_{\tau_t\tau_t,t}=2\Theta_t.
\]
Using the first identity in the last equation yields
\[
 2\Theta_t
 =
 m_{n_tn_t,t}-m_{\tau_t\tau_t,t}.
\]
This proves \eqref{eq:parabolic-bethuel-relations}.

Since each diffuse spatial-gradient tensor is positive semidefinite as a
matrix-valued measure, the same is true of its weak-star limit \(\mu_t\).
Consequently, its tangential density satisfies
\[
 m_{\tau_t\tau_t,t}\geq0.
\]
Equivalently,
\[
 \rho_t
 =
 e_t-2\Theta_t
 =
 m_{\tau_t\tau_t,t}
 \geq0,
\]
which proves \eqref{eq:parabolic-rho-definition}.
\end{proof}

\begin{rem}[Output of the compactness argument]
The conclusions of this section are static statements on almost every time
slice:
\[
 \nu_t=e_t\,\cH^1\llcorner S_t,
 \qquad
 \zeta_t=\Theta_t\,\cH^1\llcorner S_t,
\]
\[
 \mu_{ab,t}=m_{ab,t}\,\cH^1\llcorner S_t,
 \qquad
 T_t
 =
 2\Theta_t\tau_t\otimes\tau_t\,
 \cH^1\llcorner S_t,
\]
together with the Bethuel relations
\[
 e_t=m_{n_tn_t,t},
 \qquad
 m_{n_t\tau_t,t}=0,
 \qquad
 2\Theta_t
 =
 m_{n_tn_t,t}-m_{\tau_t\tau_t,t}.
\]
No evolution law has yet been identified.  The next section combines these
slice-wise identities with the limiting momentum and dissipation to derive
the normal curvature balance, the tangential internal-mobility law, and the
localized energy inequality.
\end{rem}

\section{The two-mobility measure evolution}
\label{sec:two-mobility-evolution}

The purpose of this section is to convert the compactness and structural
information obtained in the preceding sections into an evolution law for
the limiting interfacial measures.  There are three main points.  First, the
positivity of the diffuse spacetime Gram matrix gives a sharp lower bound
for the limiting action.  Second, the stress identity identifies the
limiting momentum with the first variation of the weighted rectifiable
set.  Its normal and tangential components produce two distinct mobility
laws.  Third, the exact diffuse local energy identity passes to the limit
and yields the localized energy inequality, including a non-negative
dissipation-defect measure.

Throughout this section, all assertions concerning a fixed time are
understood to hold at a time for which the conclusions of
Propositions~\ref{prop:time-slice-rectifiability} and
\ref{prop:parabolic-bethuel-relations} hold.  We continue to write
\[
 \lambda_t:=\cH^1\llcorner S_t,
 \qquad
 \nu_t=e_t\lambda_t,
 \qquad
 \zeta_t=\Theta_t\lambda_t,
\]
and
\[
 \mu_t
 =\rho_t\,\tau_t\otimes\tau_t\,\lambda_t
  +e_t\,n_t\otimes n_t\,\lambda_t,
 \qquad
 \rho_t=e_t-2\Theta_t\geq0.
\]

\begin{lem}[Jointly measurable slice representatives]
\label{lem:joint-measurable-slices}
After modifying the slice representatives on a set of times of measure zero,
all objects used below may be chosen jointly measurable in \(x,t\):
\(S_t\), the densities \(e_t,\Theta_t,m_t\), and a unit tangent field
\(\tau(x,t)\) on the rectifiable carrier.  A measurable unit normal is
then fixed by \(n(x,t)=(-\tau_2,\tau_1)\).  The measures
\(\lambda_t=\mathcal H^1\llcorner S_t\) and the fields obtained by
orthogonal projection are weakly measurable.
\end{lem}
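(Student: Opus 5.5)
The plan is to avoid selecting the slice objects time by time and instead read them off from spacetime measures, which are automatically jointly measurable, via Radon--Nikodym derivatives and density ratios.

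\textbf{The carrier.} We first define a jointly measurable candidate for the carrier. For the spacetime measure \(\nu=\nu_t\,\dd t\), consider the lower one-dimensional spatial density
\[
 \theta(x,t):=\liminf_{r\downarrow0,\ r\in\mathbb Q}\frac{\nu_t(B_r(x))}{2r}.
\]
By weak measurability of \(t\mapsto\nu_t\), each map \((x,t)\mapsto\nu_t(B_r(x))\) is Borel: it is a monotone limit of \(\int\phi_k(\cdot-x)\,\dd\nu_t\) with continuous \(\phi_k\uparrow\mathbf 1_{B_r}\). Hence \(\theta\) is Borel. We set
\[
 \widetilde S:=\{(x,t):0<\theta(x,t)<\infty\},\qquad \widetilde S_t:=\{x:(x,t)\in\widetilde S\}.
\]
For a.e.\ \(t\), Proposition~\ref{prop:time-slice-rectifiability} and the lower density bound from Theorem~\ref{thm:forced-bethuel} show that \(\widetilde S_t\) and \(S_t\) differ by an \(\mathcal H^1\)-null set. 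Such a modification is allowed, since all representations only involve \(\mathcal H^1\llcorner S_t\). The closed representative \(S_t=\operatorname{spt}\nu_t\) is then recovered as needed; closedness is not used in the measurability statements.

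\textbf{The densities.} These come out as Borel functions of \((x,t)\): \(e(x,t):=\theta(x,t)\) (the limit exists \(\mathcal H^1\)-a.e.\ on rectifiable points), \(\Theta:=e\cdot\dd\zeta/\dd\nu\), and \(m_{ab}:=e\cdot\dd\mu_{ab}/\dd\nu\). Each Radon--Nikodym derivative is taken for the spacetime measures \(\zeta_t\,\dd t\) and \(\mu_{ab,t}\,\dd t\) with respect to \(\nu_t\,\dd t\), so it is jointly Borel. By uniqueness of disintegration it agrees, for a.e.\ \(t\), with the slice derivative \(\nu_t\)-a.e.

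\textbf{The tangent.} The key observation is that \(\tau\) can be taken from the stress rather than from blow-ups. By \eqref{eq:parabolic-rank-one-stress}, \(T_t=2\Theta_t\,\tau_t\otimes\tau_t\,\lambda_t\). Hence the matrix field \(A:=e\,\dd T/\dd\nu\), with \(T=T_t\,\dd t\), is jointly Borel and equals \(2\Theta\,\tau\otimes\tau\) a.e. We then define \(\tau(x,t)\) as the unit eigenvector of \(A\) for its top eigenvalue, normalized by a Borel sign rule (for instance, first nonzero coordinate positive). This is a Borel selection on the set where \(A\neq0\), which has full measure because \(\Theta>0\). Setting \(n:=(-\tau_2,\tau_1)\) then gives the normal.

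\textbf{Weak measurability and the main obstacle.} Weak measurability of \(t\mapsto\lambda_t=(1/e)\nu_t\) and of the projected fields follows by Fubini for Borel integrands against a weakly measurable family, after monotone truncation of \(1/e\). The main obstacle is the rectifiable-point bookkeeping. We must check that for a.e.\ \(t\) the Borel objects coincide \(\mathcal H^1\)-a.e.\ with the slice objects of Sections 3--4. This needs a countable dense family of balls and test functions, so that a single full-measure set of times works simultaneously. We handle it by intersecting countably many full-measure time sets, exactly as in the proof of Proposition~\ref{prop:spacetime-disintegration}.
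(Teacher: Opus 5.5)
Your proposal is correct, but it reaches the conclusion by a different route from the paper. The paper's proof is a sketch. It obtains the densities from disintegration and measurable Radon--Nikodym differentiation of the spacetime measures fixed before slicing. It gets the tangent line by citing a ``measurable tangent theorem for rectifiable measures,'' oriented by the Borel rule $\tau_1>0$ (or $\tau_1=0,\tau_2>0$).

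You proceed differently in two places. First, you build the carrier and $e$ directly from the Borel lower density $\theta$ of $\nu_t$. This also produces the reference measure $\lambda_t=e^{-1}\nu_t$, which a Radon--Nikodym definition $e_t=\dd\nu_t/\dd\lambda_t$ would tacitly presuppose. Second, you read $\tau$ off the jointly Borel matrix $e\,\dd T/\dd\nu=2\Theta\,\tau\otimes\tau$, using the rank-one stress of Proposition~\ref{prop:parabolic-bethuel-relations}. That proposition is proved slice by slice, so there is no circularity.

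Your route is self-contained: everything reduces to Radon--Nikodym derivatives of fixed spacetime measures plus an explicit Borel eigenvector selection, with no external measurable-selection theorem for approximate tangents. The paper's route is more general, since it does not use the PDE-specific stress structure and would apply to any weakly measurable family of rectifiable measures. The price is that it leaves the tangent step as a black box.

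Two minor remarks. Joint measurability of $(x,t)\mapsto\int\phi_k(y-x)\,\dd\nu_t(y)$ follows from continuity in $x$ together with measurability in $t$ (a Carath\'eodory argument), and you should state this. Also, you do not need to replace the closed carrier at all, because $\{(x,t):x\in\operatorname{spt}\nu_t\}=\bigcap_{r\in\mathbb Q_{>0}}\{(x,t):\nu_t(B_r(x))>0\}$ is jointly measurable by the same argument.
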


\begin{proof}
The spacetime measures \(\nu,\zeta,\mu\) are fixed before slicing.
Disintegration and measurable Radon--Nikodym differentiation give jointly
measurable densities.  The measurable tangent theorem for rectifiable
measures gives a jointly measurable approximate tangent line to the
rectifiable slices.  Choose its orientation by the Borel rule
\(\tau_1>0\), or \(\tau_1=0,\tau_2>0\), and set
\(n=(-\tau_2,\tau_1)\).  All scalar and matrix fields below are then
measurable combinations of these densities and projections.  We henceforth
work on the intersection of the countably many full-measure sets occurring
in the preceding propositions.
\end{proof}

\subsection{The sharp action lower bound}

The diffuse action, momentum, and spatial gradient measures are not
independent.  They are the entries of a single positive-semidefinite Gram
matrix.  Positivity is preserved under weak-star convergence and therefore
imposes a pointwise constraint on the densities of the limiting measures.

\begin{prop}[Sharp action lower bound]
\label{prop:sharp-action}
Let
\[
 \alpha=a\,\lambda_t\dd t+\alpha^{\rm s},
 \qquad
 \alpha^{\rm s}\perp\lambda_t\dd t,
\]
be the decomposition in \eqref{eq:action-decomposition}, and write
\[
 q_t=\mathbf q_t\,\lambda_t,
 \qquad
 \mathbf q_t=q_{\tau,t}\tau_t+q_{n,t}n_t.
\]
Then
\begin{equation}\label{eq:sharp-action-repeated}
 a\geq
 \frac{\abs{q_{n,t}}^2}{e_t}
 +\frac{\abs{q_{\tau,t}}^2}{\rho_t}
 \quad
 \text{for }\lambda_t\dd t\text{-almost every }(x,t).
\end{equation}
Here the second quotient is understood as:
\[
 \frac{\abs{q_{\tau,t}}^2}{\rho_t}
 :=
 \begin{cases}
  \abs{q_{\tau,t}}^2/\rho_t,
       &\rho_t>0,\\
  0,   &\rho_t=0\text{ and }q_{\tau,t}=0,\\
  +\infty,
       &\rho_t=0\text{ and }q_{\tau,t}\neq0.
 \end{cases}
\]
In particular,
\begin{equation}\label{eq:zero-rho-zero-momentum}
 q_{\tau,t}=0
 \quad\text{for }\lambda_t\dd t\text{-almost every point of }
 \{\rho_t=0\}.
\end{equation}
\end{prop}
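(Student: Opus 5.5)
We pass positivity of the diffuse spacetime Gram matrix to the limit and then read off its densities with respect to a common reference measure. For every \(j\) and every choice of continuous functions \(\phi_0,\phi_1,\phi_2\) on \(\T^2\times(0,T)\), we have
\[
 \int\eps_j\bigl|\phi_0\partial_tu_j+\phi_a\partial_au_j\bigr|^2\dd x\dd t\geq0,
\]
that is,
\[
 \int\phi_0^2\,\dd\alpha_j+2\int\phi_0\phi_a\,\dd q_{j,a}+\int\phi_a\phi_b\,\dd\mu^t_{j,ab}\dd t\geq0 .
\]
Every entry converges weakly-star by \eqref{eq:spacetime-measure-convergence}. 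We therefore obtain the same inequality for the limits \(\alpha\), \(q\), \(\mu_t\dd t\) and all continuous \(\phi_0,\phi_a\). By density this extends to bounded Borel \(\phi\)'s integrable against the relevant measures. (Here we use that \(q\) and \(\mu_t\dd t\) are absolutely continuous with respect to \(\nu_t\dd t\), so Lusin approximation is harmless.) The sign convention \(q_j\) versus \(-q_j\) is irrelevant, since \(\phi_0\mapsto-\phi_0\) is allowed.

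Next we localize. Choose the reference measure \(\lambda_t\dd t+\alpha^{\rm s}\). Relative to it, the limiting Gram matrix has density
\[
 \mathcal G=\begin{pmatrix} a & \mathbf q_t^{\mathsf T}\\ \mathbf q_t & m_t\end{pmatrix}
\]
on the \(\lambda_t\dd t\)-part. On the \(\alpha^{\rm s}\)-part, only the \((0,0)\) entry is nonzero. A nonnegative quadratic form in measures with a common reference measure has a positive semidefinite density almost everywhere. To prove this, test with \(\phi\)'s equal to \(\psi\,\xi\) for a countable dense set of constant vectors \(\xi\in\mathbb Q^3\) and arbitrary \(\psi\geq0\) in \(C_c\). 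This gives \(\xi^{\mathsf T}\mathcal G\,\xi\geq0\) almost everywhere for all rational \(\xi\), and then by continuity for all \(\xi\). Restricting to \(\lambda_t\dd t\), which is mutually singular with \(\alpha^{\rm s}\), yields \(\mathcal G\succeq0\) for \(\lambda_t\dd t\)-a.e. \((x,t)\). Joint measurability (Lemma~\ref{lem:joint-measurable-slices}) makes the densities well defined as functions of \((x,t)\).

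Finally we apply linear algebra in the frame \((\tau_t,n_t)\). By Proposition~\ref{prop:parabolic-bethuel-relations}, the spatial block is \(m_t=\rho_t\tau_t\otimes\tau_t+e_t n_t\otimes n_t\), which is diagonal with entries \(\rho_t\) and \(e_t\). Note that \(e_t>0\), since \(e_t\geq\Theta_t>0\). Take \(\xi=(1,-s\tau_t-r n_t)\). This gives
\[
 a-2s q_{\tau,t}-2r q_{n,t}+s^2\rho_t+r^2e_t\geq0\qquad\text{for all } s,r .
\]
Choosing \(r=q_{n,t}/e_t\) gives
\[
 a-\frac{|q_{n,t}|^2}{e_t}\geq 2sq_{\tau,t}-s^2\rho_t .
\]
If \(\rho_t>0\), take \(s=q_{\tau,t}/\rho_t\) to get \eqref{eq:sharp-action-repeated}. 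If \(\rho_t=0\), letting \(s\to\pm\infty\) forces \(q_{\tau,t}=0\), which is \eqref{eq:zero-rho-zero-momentum}, because \(a<\infty\) a.e.; this finiteness holds since \(a\) is a density of a Radon measure. This is the Schur complement inequality.

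The main obstacle is the localization step. We must justify that a pointwise bound follows from the integrated positivity while the reference measure has a singular part \(\alpha^{\rm s}\), and while \(\alpha\) need not be absolutely continuous in time. We handle this by writing everything relative to \(\lambda_t\dd t+\alpha^{\rm s}\) and testing with cutoffs \(\psi\) concentrated near a Borel set \(B\) with \(\alpha^{\rm s}(B)=0\) and full \(\lambda_t\dd t\)-measure. This requires an approximation of indicator-weighted test fields by continuous ones in \(L^2\) of the total variation measure of the Gram matrix; such approximation is standard because all entries are finite Radon measures.
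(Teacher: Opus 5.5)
Your proposal is correct and follows the paper's route: pass the positivity of the diffuse spacetime Gram matrix to the weak-star limit, take its density with respect to \(\lambda_t\,\dd t\) (your reference measure \(\lambda_t\,\dd t+\alpha^{\rm s}\) is equivalent to the paper's Lebesgue decomposition, in which \(\alpha^{\rm s}\) occupies only the \((0,0)\) entry), and conclude with the Schur-complement minimization in the frame \((\partial_t,\tau_t,n_t)\), with the spatial block given by the Bethuel relations. Your rational-vector argument spells out the fact, which the paper only quotes, that a positive semidefinite matrix-valued measure has a positive semidefinite density almost everywhere; the extension to bounded Borel test fields and the cutoff discussion in your last paragraph are harmless but unnecessary.
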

\begin{rem}[Interpretation of the action bound]
The inequality
\[
 a\geq \frac{|q_{n,t}|^2}{e_t}
   +\frac{|q_{\tau,t}|^2}{\rho_t}
\]
is the positivity condition inherited from the Gram matrix of
\(\partial_tu_\eps\), \(\partial_{n_t}u_\eps\), and
\(\partial_{\tau_t}u_\eps\).  It states that the interfacial dissipation
controls both the normal energy flux and, when \(\rho_t>0\), the tangential
internal flux.  If \(\rho_t=0\), positivity forces \(q_{\tau,t}=0\), and the
bound reduces to \(a\geq |q_{n,t}|^2/e_t\).  Thus, in the scalar
equipartition regime, where \(e_t=2\Theta_t\), one recovers the usual
normal-motion dissipation estimate.
\end{rem}

\begin{proof}
Regard
\[
 Z_j:=
 \begin{pmatrix}
  \partial_tu_j\\
  \partial_1u_j\\
  \partial_2u_j
 \end{pmatrix}
\]
as a \(3\) by \(k\) matrix whose rows are vectors in \(\R^k\).  Its Gram
matrix is \(Z_jZ_j^{\mathsf T}\), where \(^{\mathsf T}\) denotes the transpose.

Define the symmetric matrix-valued measure
\begin{equation}\label{eq:diffuse-gram-matrix}
 \mathcal G_j
 :=
 \eps_j Z_jZ_j^{\mathsf T}\dd x\dd t
 =
 \begin{pmatrix}
  \alpha_j&q_j^{\mathsf T}\\
  q_j&\mu_j
 \end{pmatrix}.
\end{equation}
For every \(c=(c_0,c_1,c_2)\in\R^3\) and every non-negative
\(\psi\in C_c(\T^2\times(0,T))\),
\begin{align}
 \int\psi\,c^{\mathsf T}\dd\mathcal G_j\,c
 &=
 \int\psi\,\eps_j
 \left|
  c_0\partial_tu_j
  +c_1\partial_1u_j
  +c_2\partial_2u_j
 \right|^2
 \dd x\dd t
 \notag\\
 &\geq0.
 \label{eq:diffuse-gram-positive}
\end{align}
Thus \(\mathcal G_j\) is positive semidefinite as a matrix-valued
measure.

After passing to the subsequence fixed in
\eqref{eq:spacetime-measure-convergence}, we have
\[
 \mathcal G_j\stackrel{*}{\rightharpoonup}
 \mathcal G
 :=
 \begin{pmatrix}
  \alpha&q^{\mathsf T}\\
  q&\mu
 \end{pmatrix}.
\]
Passing to the limit in \eqref{eq:diffuse-gram-positive} shows that
\(\mathcal G\) is positive semidefinite in the following precise sense:
for every constant vector \(c\in\mathbb R^3\), the scalar measure
\(c^{\mathsf T}\mathcal Gc\) is nonnegative.  Since the absolutely
continuous density of a positive matrix\-valued measure is positive
semidefinite almost everywhere, the fixed\-coordinate density of \(\mathcal G\)
is positive semidefinite.

Set
\[
 \Lambda:=\lambda_t\dd t.
\]
The measures \(q\) and \(\mu\) are absolutely continuous with respect to
\(\Lambda\), whereas
\[
 \alpha=a\Lambda+\alpha^{\rm s}.
\]
Consequently, the Lebesgue decomposition of \(\mathcal G\) relative to
\(\Lambda\) is
\begin{equation}\label{eq:limit-gram-decomposition}
 \mathcal G
 =
 G\,\Lambda
 +
 \begin{pmatrix}
  \alpha^{\rm s}&0\\
  0&0
 \end{pmatrix},
\end{equation}
where, in the orthonormal spacetime frame
\((\partial_t,\tau_t,n_t)\),
\begin{equation}\label{eq:limit-gram-density}
 G=
 \begin{pmatrix}
  a&q_{\tau,t}&q_{n,t}\\
  q_{\tau,t}&\rho_t&0\\
  q_{n,t}&0&e_t
 \end{pmatrix}.
\end{equation}
Here the entries of \(G\) are determined by the limiting Gram structure.  By the preceding lemma, the orthogonal matrix \(R(x,t)\) with columns
\((\tau_t,n_t)\) is measurable.  The spatial block is changed by the
pointwise congruence \(R^{\mathsf T}\mu R\), which preserves positive
semidefiniteness; no additional weak-limit operation is involved.

Indeed, with respect to the frame \((\partial_t,\tau_t,n_t)\), the
\((\tau_t,\tau_t)\), \((n_t,n_t)\), and mixed spatial components are given by
Bethuel's relations
\[
m_{\tau\tau,t}=\rho_t,\qquad
m_{n n,t}=e_t,\qquad
m_{n\tau,t}=0.
\]
The time--tangential and time--normal entries are precisely the components
\(q_{\tau,t}\) and \(q_{n,t}\) of the limiting momentum, while the
time--time entry is the interfacial dissipation density \(a\).

The absolutely continuous density of a positive-semidefinite
matrix-valued measure is positive semidefinite almost everywhere.
Therefore \(G\geq0\) for \(\Lambda\)-almost every \((x,t)\).

At points where \(\rho_t>0\), positivity gives, for every
\((s,\xi_\tau,\xi_n)\in\R^3\),
\[
 0\leq
 as^2+2s q_{\tau,t}\xi_\tau
 +2s q_{n,t}\xi_n
 +\rho_t\xi_\tau^2+e_t\xi_n^2.
\]
Since \(e_t>0\) for \(\lambda_t\)-almost every point, we may take
\(s=1\) and minimize the right-hand side with respect to
\(\xi_\tau\) and \(\xi_n\).  The minimizing values are
\[
 \xi_\tau=-\frac{q_{\tau,t}}{\rho_t},
 \qquad
 \xi_n=-\frac{q_{n,t}}{e_t}.
\]
It follows that
\[
 0\leq
 a-\frac{\abs{q_{\tau,t}}^2}{\rho_t}
  -\frac{\abs{q_{n,t}}^2}{e_t},
\]
which is \eqref{eq:sharp-action-repeated} on \(\{\rho_t>0\}\).

It remains to consider \(\rho_t=0\).  The principal minor of \(G\)
corresponding to the time and tangential directions is
\[
 \begin{pmatrix}
  a&q_{\tau,t}\\
  q_{\tau,t}&0
 \end{pmatrix}.
\]
Such a matrix can be positive semidefinite only if
\(q_{\tau,t}=0\).  Indeed, otherwise the quadratic polynomial
\[
 as^2+2q_{\tau,t}s\xi_\tau
\]
takes negative values for a suitable choice of \(s\) and \(\xi_\tau\).
This proves \eqref{eq:zero-rho-zero-momentum}.  Minimization in the normal
variable then gives \(a\geq\abs{q_{n,t}}^2/e_t\), completing the proof.
\end{proof}

\subsection{Weighted curvature and the two mobility fields}

We next interpret the stress identity as a first-variation identity.  The
relevant geometric object is the weighted varifold
\[
 \mathbf V_t:=\mathbf V(S_t,\Theta_t),
\]
whose weight measure is \(\zeta_t=\Theta_t\lambda_t\).  Its first variation
is
\begin{equation}\label{eq:weighted-varifold-definition}
 \delta\mathbf V_t(X)
 :=
 \int_{S_t}\Theta_t\,\tau_t\otimes\tau_t:DX
 \dd\lambda_t.
\end{equation}

\begin{defn}[regular branch]\label{defn:regular-branch}
Fix a time at which the conclusions of the preceding sections hold.  A
\emph{regular branch} is an open subset of \(S_t\) represented by a single
embedded \(C^1\) arclength curve
\[
 \gamma:I\longrightarrow\T^2,
 \qquad
 \abs{\gamma'(s)}=1,
\]
which contains no junction point and has the following local density
bounds: for every \(I'\Subset I\), there are constants
\(0<c_{I'}\leq C_{I'}<\infty\) such that
\[
 c_{I'}\leq e_t,\Theta_t\leq C_{I'}
 \quad\text{for almost every point of }\gamma(I').
\]
\end{defn}

\begin{prop}[Weighted curvature and mobility identities]
\label{prop:mobility-identities}
For almost every \(t\), the weighted varifold \(\mathbf V_t\) has
generalized curvature
\[
 H_{\Theta,t}\in
 L^2_{\rm loc}
 \bigl(\T^2\times(0,T),\zeta_t\dd t;\R^2\bigr),
\]
and
\begin{equation}\label{eq:weighted-first-variation-repeated}
 \delta\mathbf V_t(X)
 =-\int_{S_t}H_{\Theta,t}\cdot X\,\dd\zeta_t,
 \qquad
 q_t=-2\Theta_tH_{\Theta,t}\lambda_t.
\end{equation}

On every regular branch,
\[
 \Theta_t\in W^{1,2}_{\rm loc},
 \qquad
 \tau_t\in W^{1,2}_{\rm loc},
\]
and the branch is \(W^{2,2}_{\rm loc}\).  Moreover, $H_{\Theta,t}$ has the following normal-tangential decomposition:
\begin{equation}\label{eq:weighted-curvature-splitting-repeated}
 H_{\Theta,t}
 =H_{S_{t}}+\frac{1}{\Theta_{t}}\nabla_{S_{t}}\Theta_{t}=
 H_{S_t}+\nabla_{S_t}\log\Theta_t,
\end{equation}
where
\[
 H_{S_t}=\partial_s\tau_t
 \quad\text{and}\quad
 \nabla_{S_t}\Theta_t
 =(\partial_s\Theta_t)\tau_t
\]
on the branch.

With the global projection convention
\[
 H_{S_t}:=(n_t\otimes n_t)H_{\Theta,t},
 \qquad
 \nabla_{S_t}\Theta_t
 :=\Theta_t(\tau_t\otimes\tau_t)H_{\Theta,t},
\]
the fields in \eqref{eq:velocity-definitions} satisfy
\begin{equation}\label{eq:two-mobility-law-repeated}
 e_t\mathcal V_t=2\Theta_tH_{S_t},
 \qquad
 \rho_t\mathcal W_t=2\nabla_{S_t}\Theta_t.
\end{equation}
In addition,
\begin{equation}\label{eq:flux-decomposition-repeated}
 -\mathbf q_t=e_t\mathcal V_t+\rho_t\mathcal W_t.
\end{equation}
\end{prop}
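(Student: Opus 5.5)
The plan is to read the weighted first variation directly off the limiting stress identity of Proposition~\ref{prop:parabolic-bethuel-relations}, and then to separate the normal and tangential components.

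\emph{First variation.} Fix a good time $t$. By \eqref{eq:parabolic-rank-one-stress}, $T_t=2\Theta_t\,\tau_t\otimes\tau_t\,\lambda_t$. Hence, for $X\in C^1(\T^2;\R^2)$,
\[
 2\,\delta\mathbf V_t(X)=\int T_t:DX=-\langle \diver_x T_t,X\rangle=\int X\cdot\dd q_t .
\]
By Proposition~\ref{prop:parabolic-bethuel-relations}, $q=\mathbf b\,\nu_t\dd t$ with $\mathbf b\in L^2(\nu_t\dd t)$. Combined with $\nu_t=e_t\lambda_t\le C_\tau\zeta_t$ from \eqref{eq:spacetime-comparison}, this gives $q_t=\mathbf q_t\lambda_t$ with $\mathbf q_t=\mathbf b e_t$. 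Define
\[
 H_{\Theta,t}:=-\frac{\mathbf q_t}{2\Theta_t}.
\]
Then $\delta\mathbf V_t(X)=-\int H_{\Theta,t}\cdot X\,\dd\zeta_t$ and $q_t=-2\Theta_tH_{\Theta,t}\lambda_t$. For integrability,
\[
 \int|H_{\Theta,t}|^2\dd\zeta_t\,\dd t=\int\frac{|\mathbf b|^2e_t^2}{4\Theta_t}\,\dd\lambda_t\,\dd t\le C_\tau\int|\mathbf b|^2\dd\nu .
\]
This yields local $L^2$ on $(\tau,T)$, and joint measurability comes from Lemma~\ref{lem:joint-measurable-slices}. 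This establishes \eqref{eq:weighted-first-variation-repeated}.

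\emph{Regular branches.} Along a regular branch $\gamma$, take $X$ supported near $\gamma(I')$. Since $\lambda_t=\gamma_\#\dd s$ and $\tau_t=\gamma'$,
\[
 \delta\mathbf V_t(X)=\int\Theta\,\gamma'\cdot\partial_s(X\circ\gamma)\,\dd s .
\]
This holds provided no other pieces of $S_t$ meet the support; this is where the definition of a branch as an open subset of $S_t$ is used. The first-variation identity therefore says that the distributional derivative satisfies
\[
 \partial_s(\Theta\gamma')=\Theta H_{\Theta,t}\circ\gamma\in L^2_{\rm loc}(I),
\]
using the two-sided bound $c\le\Theta\le C$. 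Thus $\Theta\gamma'\in W^{1,2}_{\rm loc}$. Then $\Theta=|\Theta\gamma'|\in W^{1,2}_{\rm loc}$, because the modulus is Lipschitz. Since $\Theta\ge c$, it follows that $\gamma'=(\Theta\gamma')/\Theta\in W^{1,2}_{\rm loc}$, so the branch is $W^{2,2}_{\rm loc}$. Expanding by the product rule,
\[
 \Theta H_{\Theta,t}=(\partial_s\Theta)\tau+\Theta\,\partial_s\tau .
\]
Because $|\tau|=1$, we have $\partial_s\tau\perp\tau$. This gives the normal/tangential splitting \eqref{eq:weighted-curvature-splitting-repeated} with $H_{S_t}=\partial_s\tau$, and shows it agrees with the projection conventions. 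The main obstacle is justifying the localization: outside a branch, test fields must not see other parts of $S_t$ accumulating near $\gamma(I')$. I would handle it by using the openness of the branch in $S_t$ and $\lambda_t$-density considerations, shrinking the supports if necessary.

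\emph{Mobility laws.} Project $\mathbf q_t=-2\Theta_tH_{\Theta,t}$ onto $n_t$ and $\tau_t$:
\[
 q_{n,t}\,n_t=-2\Theta_tH_{S_t},\qquad q_{\tau,t}\,\tau_t=-2\nabla_{S_t}\Theta_t .
\]
By \eqref{eq:velocity-definitions}, $e_t\mathcal V_t=-q_{n,t}n_t=2\Theta_tH_{S_t}$. On $\{\rho_t>0\}$, $\rho_t\mathcal W_t=-q_{\tau,t}\tau_t=2\nabla_{S_t}\Theta_t$. On $\{\rho_t=0\}$, Proposition~\ref{prop:sharp-action} gives $q_{\tau,t}=0$, hence $\nabla_{S_t}\Theta_t=0$, and both sides of the tangential law vanish. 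Summing the two projections gives $-\mathbf q_t=e_t\mathcal V_t+\rho_t\mathcal W_t$.
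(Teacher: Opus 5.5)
Your proposal is correct and follows the paper's proof essentially step for step: you read the first variation off $\diver_x T_t=-q_t$ with $H_{\Theta,t}:=-\mathbf q_t/(2\Theta_t)$ and obtain the $L^2$ bound from $e_t\le C_\tau\Theta_t$; on a branch you derive $\partial_s(\Theta_t\tau_t)\in L^2_{\mathrm{loc}}$, apply the Lipschitz chain rule to $|\cdot|$ and then the product rule; and you get the mobility laws by projecting onto $n_t,\tau_t$, using $q_{\tau,t}=0$ on $\{\rho_t=0\}$. The localization issue you flag is one the paper passes over silently, since it simply takes $X$ supported in the branch, and it is indeed settled by the relative openness of the branch in $S_t$ together with the embeddedness of $\gamma$.
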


\begin{proof}
We divide the proof into four steps.

\smallskip
\noindent
\emph{Step 1: the stress identity as a first variation.}
Let
\(X\in C_c^1(\T^2\times(0,T);\R^2)\).
Using
\[
 T_t
 =2\Theta_t\tau_t\otimes\tau_t\,\lambda_t
\]
and the definition \eqref{eq:weighted-varifold-definition}, we obtain
\begin{align}
 2\int_0^T\delta\mathbf V_t(X(\cdot,t))\dd t
 &=
 \int_0^T\int_{S_t}
 2\Theta_t\tau_t\otimes\tau_t:DX
 \dd\lambda_t\dd t
 \notag\\
 &=
 \int_0^T\int_{\T^2}T_t:DX\dd t.
 \label{eq:stress-to-first-variation}
\end{align}
Since \(\diver_xT_t=-q_t\), the distributional definition of divergence
gives
\[
 \int_0^T\int_{\T^2}T_t:DX\dd t
 =
 \int_{\T^2\times(0,T)}X\cdot\dd q.
\]
Consequently,
\begin{equation}\label{eq:spacetime-first-variation}
 2\int_0^T\delta\mathbf V_t(X(\cdot,t))\dd t
 =
 \int_{\T^2\times(0,T)}X\cdot\dd q.
\end{equation}
Disintegration in time shows that, for almost every \(t\),
\begin{equation}\label{eq:slice-first-variation}
 2\delta\mathbf V_t(X)
 =
 \int_{S_t}\mathbf q_t\cdot X\dd\lambda_t.
\end{equation}

\smallskip
\noindent
\emph{Step 2: absolute continuity with respect to the weighted measure.}
Let
\[
 \Lambda:=\lambda_t\,\dd t,
 \qquad
 \nu=\nu_t\,\dd t=e_t\,\Lambda,
 \qquad
 \zeta_t\,\dd t=\Theta_t\,\Lambda.
\]
By the previous step,
\[
 q\ll \nu,
 \qquad
 g:=\frac{\dd q}{\dd\nu}
 \in L^2(\nu;\mathbb R^2).
\]
Since \(q_t=\mathbf q_t\,\lambda_t\), the Radon--Nikodym relation
\(q=g\,\nu\) gives
\[
 \mathbf q_t=e_tg
 \qquad\text{for }\Lambda\text{-almost every }(x,t).
\]

Fix \(\tau>0\).  The energy--potential comparison
\[
 \nu_t\leq C_\tau\zeta_t
 \qquad\text{for a.e. }t\in(\tau,T)
\]
is equivalent to
\[
 e_t\leq C_\tau\Theta_t
 \qquad \Lambda\text{-almost everywhere on }(\tau,T).
\]
Consequently,
\[
 \frac{|\mathbf q_t|^2}{\Theta_t}
 =
 \frac{e_t^2}{\Theta_t}|g|^2
 \leq C_\tau e_t|g|^2.
\]
Therefore,
\[
 \int_\tau^T\int_{S_t}
 \frac{|\mathbf q_t|^2}{\Theta_t}
 \,\dd\lambda_t\dd t
 \leq
 C_\tau\int_\tau^T\int_{\T^2}
 |g|^2\,\dd\nu_t\dd t
 <\infty.
\]

We may thus define, for \(\zeta_t\dd t\)-almost every \((x,t)\),
\begin{equation}\label{eq:weighted-curvature-definition}
 H_{\Theta,t}:=
 -\frac{\mathbf q_t}{2\Theta_t}.
\end{equation}
The preceding estimate implies
\[
 \int_\tau^T\int_{S_t}
 |H_{\Theta,t}|^2\,\dd\zeta_t\dd t
 =
 \frac14
 \int_\tau^T\int_{S_t}
 \frac{|\mathbf q_t|^2}{\Theta_t}
 \,\dd\lambda_t\dd t
 <\infty.
\]
Since \(\tau>0\) is arbitrary, this yields
\[
 H_{\Theta,t}\in
 L^2_{\mathrm{loc}}
 \bigl(\T^2\times(0,T),\zeta_t\dd t;\mathbb R^2\bigr).
\]

Finally, combining
\eqref{eq:slice-first-variation} with
\eqref{eq:weighted-curvature-definition}, we obtain
\[
 \delta\mathbf V_t(X)
 =
 -\int_{S_t}H_{\Theta,t}\cdot X\,\dd\zeta_t,
 \qquad
 q_t
 =
 -2\Theta_tH_{\Theta,t}\,\lambda_t.
\]
This proves \eqref{eq:weighted-first-variation-repeated}.

\smallskip
\noindent
\emph{Step 3: regularity and curvature splitting on a regular branch.}
Fix a time \(t\) for which the preceding conclusions hold, and let
\(\gamma:I\to S_t\) be an arclength parametrization of a regular branch.
After choosing the orientation, we have
\[
 \tau_t(\gamma(s))=\gamma'(s)
 \qquad\text{for a.e. }s\in I.
\]

Let \(X\in C_c^1(\mathbb T^2;\mathbb R^2)\) be supported in the branch.
Using
\[
 T_t=2\Theta_t\,\tau_t\otimes\tau_t\,\lambda_t
\]
and \(\operatorname{div}T_t=-q_t\), we obtain
\[
 \int_I
 2\Theta_t(\gamma(s))\tau_t(\gamma(s))
 \cdot\partial_s(X\circ\gamma)(s)\,\dd s
 =
 \int_I
 \mathbf q_t(\gamma(s))\cdot X(\gamma(s))\,\dd s.
\]
Since \(X\circ\gamma\) can be chosen as an arbitrary compactly supported
test function on \(I\), it follows that
\begin{equation}\label{eq:weighted-tangent-derivative}
 \partial_s(\Theta_t\tau_t)
 =
 -\frac12\mathbf q_t
 \qquad\text{in }\mathcal D'(I).
\end{equation}

By the local bounds in
Definition~\ref{defn:regular-branch}, \(\Theta_t\) and \(e_t\) are bounded
above and bounded away from zero on compact subintervals of \(I\).
Moreover, the sharp action estimate implies locally
\[
 \int_I\frac{|\mathbf q_t|^2}{\Theta_t}\,\dd s<\infty.
\]
Hence
\[
 \mathbf q_t\in L^2_{\rm loc}(I;\mathbb R^2).
\]
Equation \eqref{eq:weighted-tangent-derivative} therefore yields
\[
 \Theta_t\tau_t\in W^{1,2}_{\rm loc}(I;\mathbb R^2).
\]

Since \(|\tau_t|=1\), we have
\[
 \Theta_t=|\Theta_t\tau_t|.
\]
On every compact subinterval, \(\Theta_t\) is bounded below by a positive
constant.  The Sobolev chain rule applied to the Lipschitz map
\(z\mapsto |z|\) consequently gives
\[
 \Theta_t\in W^{1,2}_{\rm loc}(I).
\]
Applying the chain rule once more to
\[
 \tau_t=\frac{\Theta_t\tau_t}{|\Theta_t\tau_t|}
\]
gives
\[
 \tau_t\in W^{1,2}_{\rm loc}(I;\mathbb R^2).
\]
Since \(\gamma'=\tau_t\), it follows that
\[
 \gamma\in W^{2,2}_{\rm loc}(I;\mathbb R^2).
\]

We may now use the weak product rule in
\eqref{eq:weighted-tangent-derivative}:
\[
 \partial_s(\Theta_t\tau_t)
 =
 (\partial_s\Theta_t)\tau_t
 +\Theta_t\partial_s\tau_t.
\]
Differentiating \(|\tau_t|^2=1\) gives
\[
 \tau_t\cdot\partial_s\tau_t=0,
\]
so \(\partial_s\tau_t\) is normal to the branch.  Define
\[
 H_{S_t}:=\partial_s\tau_t,
 \qquad
 \nabla_{S_t}\Theta_t:=(\partial_s\Theta_t)\tau_t.
\]
Then
\[
 \frac1{\Theta_t}\partial_s(\Theta_t\tau_t)
 =
 H_{S_t}
 +\frac{\partial_s\Theta_t}{\Theta_t}\tau_t
 =
 H_{S_t}+\nabla_{S_t}\log\Theta_t.
\]
Finally, by
\eqref{eq:weighted-curvature-definition} and
\eqref{eq:weighted-tangent-derivative},
\[
 H_{\Theta,t}
 =
 -\frac{\mathbf q_t}{2\Theta_t}
 =
 \frac1{\Theta_t}\partial_s(\Theta_t\tau_t).
\]
Therefore
\[
 H_{\Theta,t}
 =
 H_{S_t}+\nabla_{S_t}\log\Theta_t,
\]
which proves \eqref{eq:weighted-curvature-splitting-repeated}.

\smallskip
\noindent
\emph{Step 4: normal and tangential mobility laws.}
The relation
\[
 \mathbf q_t=-2\Theta_tH_{\Theta,t}
\]
and the orthogonal decomposition of \(H_{\Theta,t}\) give
\begin{equation}\label{eq:momentum-curvature-components}
 \mathbf q_t
 =
 -2\Theta_tH_{S_t}
 -2\nabla_{S_t}\Theta_t.
\end{equation}
The first term is normal and the second is tangential.  Therefore
\[
 q_{n,t}n_t=-2\Theta_tH_{S_t},
 \qquad
 q_{\tau,t}\tau_t=-2\nabla_{S_t}\Theta_t.
\]
Using
\[
 \mathcal V_t=-\frac{q_{n,t}}{e_t}n_t,
 \qquad
 \mathcal W_t=-\frac{q_{\tau,t}}{\rho_t}\tau_t
\]
on \(\{\rho_t>0\}\), we obtain
\[
 e_t\mathcal V_t=2\Theta_tH_{S_t},
 \qquad
 \rho_t\mathcal W_t=2\nabla_{S_t}\Theta_t.
\]

On \(\{\rho_t=0\}\), Proposition~\ref{prop:sharp-action} gives
\(q_{\tau,t}=0\).  The tangential part of
\eqref{eq:momentum-curvature-components} then implies
\[
 \nabla_{S_t}\Theta_t=0
 \quad\text{on }\{\rho_t=0\}.
\]
Thus the second mobility identity remains valid there after setting
\(\mathcal W_t=0\).  Finally,
\[
 -\mathbf q_t
 =-q_{n,t}n_t-q_{\tau,t}\tau_t
 =e_t\mathcal V_t+\rho_t\mathcal W_t,
\]
which proves \eqref{eq:flux-decomposition-repeated}.
\end{proof}

\begin{rem}[Geometric meaning of the weighted curvature and mobility fields]
\label{rem:weighted-curvature-mobility}
The vector \(H_{\Theta,t}\) is the generalized curvature of the weighted
varifold \(\mathbf V(S_t,\Theta_t)\).  It decomposes into the normal
geometric curvature of the branch and a tangential contribution generated by
the variation of the weight:
\[
 H_{\Theta,t}
 =
 H_{S_t}+\nabla_{S_t}\log\Theta_t,
 \qquad
 H_{S_t}=(n_t\otimes n_t)H_{\Theta,t},
 \qquad
 \nabla_{S_t}\log\Theta_t
 =(\tau_t\otimes\tau_t)H_{\Theta,t}.
\]
Thus \(H_{S_t}\) bends the interface in the normal direction, whereas
\(\nabla_{S_t}\log\Theta_t\) records the tangential variation of the
interfacial energy density and contributes to the weighted first variation
even when the underlying curve is geometrically fixed.

Correspondingly, the limiting energy flux has two channels:
\[
 -\frac{\dd q}{\dd\nu}
 =
 \mathcal V_t+\frac{\rho_t}{e_t}\mathcal W_t,
 \qquad
 e_t\mathcal V_t=2\Theta_tH_{S_t},
 \qquad
 \rho_t\mathcal W_t=2\nabla_{S_t}\Theta_t.
\]
The field \(\mathcal V_t\) is the normal mobility driven by geometric
curvature, while \(\mathcal W_t\) is a tangential internal mobility describing
the redistribution of residual vectorial microstructure.  These fields need
not coincide with the velocity of the reduced phase boundary when hidden
interfaces or collapsed layers are present.

In the scalar equipartition regime, \(\rho_t=0\) and the tangential channel
disappears: \(q_{\tau,t}=0\), \(H_{\Theta,t}=H_{S_t}\) when
\(\Theta_t\) is constant along the branch, and the usual normal
mean-curvature-flow law is recovered.
\end{rem}

\subsection{The localized energy inequality}

The diffuse local energy balance is an equality.  In the limit, the part of
the action detected by the momentum is determined by the two mobility
fields.  Any remaining action is represented by a non-negative measure
\(\gamma\).

\begin{prop}[Localized energy identity and inequality]
\label{prop:localized-energy}
Define
\begin{equation}\label{eq:dissipation-density}
 D_t
 :=
 e_t\abs{\mathcal V_t}^2
 +\rho_t\abs{\mathcal W_t}^2.
\end{equation}
Then
\begin{equation}\label{eq:dissipation-defect-repeated}
 \gamma
 :=
 \alpha-D_t\lambda_t\dd t
\end{equation}
is a non-negative Radon measure.

For every non-negative
\(\phi\in C^1(\T^2\times[0,T])\) and almost every
\(0<t_1<t_2<T\), the sharper identity
\begin{align}
 &\int_{S_{t_2}}\phi(\cdot,t_2)e_{t_2}\dd\lambda_{t_2}
 -\int_{S_{t_1}}\phi(\cdot,t_1)e_{t_1}\dd\lambda_{t_1}
 \notag\\
 &=
 \int_{t_1}^{t_2}\int_{S_t}
 \Bigg[
 e_t\partial_t\phi
 +2\Theta_tH_{S_t}\cdot\nabla\phi
 +2\nabla_{S_t}\Theta_t\cdot\nabla_{S_t}\phi
 \notag\\
 &\hspace{8em}
 -\phi\left(
  \frac{4\Theta_t^2}{e_t}\abs{H_{S_t}}^2
  +\frac{4}{\rho_t}
       \abs{\nabla_{S_t}\Theta_t}^2
 \right)
 \Bigg]\dd\lambda_t\dd t
 -\int_{\T^2\times(t_1,t_2)}\phi\dd\gamma
 \label{eq:localized-energy-identity}
\end{align}
holds.  In particular, after dropping the last non-positive term, one
obtains \eqref{eq:localized-flow-inequality}.
\end{prop}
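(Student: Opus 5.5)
The plan is to pass to the limit in the exact diffuse local energy balance of Lemma~\ref{lem:diffuse-balances} tested against \(\phi\), and then rewrite each limiting term using Propositions~\ref{prop:sharp-action} and \ref{prop:mobility-identities}.

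\textbf{Nonnegativity of \(\gamma\).} By \eqref{eq:velocity-definitions},
\[
e_t|\mathcal V_t|^2=\frac{|q_{n,t}|^2}{e_t},
\qquad
\rho_t|\mathcal W_t|^2=\frac{|q_{\tau,t}|^2}{\rho_t}
\]
on \(\{\rho_t>0\}\), and \(\rho_t|\mathcal W_t|^2=0\) on \(\{\rho_t=0\}\). Hence \(D_t\) is exactly the right-hand side of \eqref{eq:sharp-action-repeated}. Thus \(a\ge D_t\) holds \(\Lambda\)-a.e. Writing
\[
\gamma=(a-D_t)\Lambda+\alpha^{\rm s}
\]
shows \(\gamma\ge0\). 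Local finiteness follows from \(D_t\lambda_t\,dt\le a\Lambda\le\alpha\), so \(\gamma\) is Radon.

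\textbf{Diffuse identity.} Multiply \(\partial_te_j=\operatorname{div}_xq_j-\alpha_j\) by \(\phi\) and integrate over \(\T^2\times(t_1,t_2)\):
\[
\nu_j^{t_2}(\phi(\cdot,t_2))-\nu_j^{t_1}(\phi(\cdot,t_1))
=\int_{t_1}^{t_2}\!\!\int e_j\partial_t\phi
-\int_{t_1}^{t_2}\!\!\int \nabla\phi\cdot q_j
-\int_{t_1}^{t_2}\!\!\int \phi\,\alpha_j .
\]
For \(t_1,t_2\in\mathcal C\), the left side converges by Lemma~\ref{lem:energy-time-compactness}. A mild time dependence of \(\phi\) is handled by uniform continuity together with the uniform mass bound.

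\textbf{Passage to the limit.} To pass to the limit on the right, the limit measures must not charge the time slices \(\{t_i\}\times\T^2\). This holds for all but countably many \(t_i\), since \(\nu\), \(|q|\) and \(\alpha\) are finite. Restricting to such times, which is still a full-measure set, weak-star convergence gives
\[
\int\!\!\int e_t\partial_t\phi\,d\lambda_t\,dt-\int\nabla\phi\cdot dq-\int\phi\,d\alpha .
\]

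\textbf{Identification of the terms.} Substitute \(q_t=\mathbf q_t\lambda_t\) with
\[
-\mathbf q_t=2\Theta_tH_{S_t}+2\nabla_{S_t}\Theta_t
\]
from \eqref{eq:momentum-curvature-components}. The flux term then becomes
\[
\int\bigl(2\Theta_tH_{S_t}\cdot\nabla\phi+2\nabla_{S_t}\Theta_t\cdot\nabla_{S_t}\phi\bigr)\,d\lambda_t\,dt,
\]
since the tangential vector only sees the tangential part of \(\nabla\phi\). For the action term, write \(\alpha=D_t\Lambda+\gamma\). Using the mobility laws, \(e_t|\mathcal V_t|^2=4\Theta_t^2|H_{S_t}|^2/e_t\) and \(\rho_t|\mathcal W_t|^2=4|\nabla_{S_t}\Theta_t|^2/\rho_t\), with the zero-denominator convention consistent because \(\nabla_{S_t}\Theta_t=0\) on \(\{\rho_t=0\}\). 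This yields \eqref{eq:localized-energy-identity}. Dropping \(-\int\phi\,d\gamma\le0\) gives the inequality.

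\textbf{Main obstacle.} I expect the main difficulty to be the slice-boundary issue in passing from spacetime weak-star convergence to integrals over \((t_1,t_2)\). The plan is to avoid cutoffs in time: restrict \(t_1,t_2\) to \(\mathcal C\) minus the countable set of atoms in time of \(\nu+|q|+\alpha\), projected to \((0,T)\). On that set, indicator functions of \((t_1,t_2)\) are admissible for weak convergence, after a standard approximation by continuous time cutoffs whose error is controlled by the masses of thin time strips.
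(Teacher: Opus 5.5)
Your proposal is correct and follows the paper's proof essentially step for step. You pass to the limit in the tested diffuse balance at slice-convergence times, obtain \(\gamma=(a-D_t)\lambda_t\,\dd t+\alpha^{\rm s}\ge0\) from the sharp action bound, and identify flux and dissipation through the mobility laws; taking \(t_1,t_2\) off the atoms of the time marginals amounts to the paper's choice of continuity points. One small precision is needed: for the signed measures \(q_j\), thin time strips are not controlled by \(|q|\) having no atoms. They are controlled by the uniform Cauchy--Schwarz bound \(|q_j|(\T^2\times(t-h,t+h))\le M_0^{1/2}(4hM_0)^{1/2}=2M_0\sqrt h\), so only the non-negative measure \(\alpha\) actually constrains the choice of \(t_1,t_2\).
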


\begin{proof}
We divide the proof into four steps.

\smallskip
\noindent
\emph{Step 1: passage to the limit in the local energy identity.}
By Lemma~\ref{lem:diffuse-balances},
\[
 \partial_t e_j
 =
 \operatorname{div}_x q_j-\alpha_j,
\]
where
\[
 e_j
 =
 \frac{\eps_j}{2}|\nabla u_j|^2
 +\frac1{\eps_j}V(u_j),
 \qquad
 q_{j,i}
 =
 \eps_j\,\partial_tu_j\cdot\partial_i u_j,
 \qquad
 \alpha_j
 =
 \eps_j|\partial_tu_j|^2.
\]
Since \(\nu_j^t=e_j(\cdot,t)\,\dd x\), let
\(\phi\in C^1(\T^2\times[0,T])\) be non-negative.  Multiplying the
local energy identity by \(\phi\), integrating over \(\T^2\), and using
that \(\T^2\) has no boundary, we obtain
\[
\begin{aligned}
 \frac{\dd}{\dd t}
 \int_{\T^2}\phi(\cdot,t)\,\dd\nu_j^t
 &=
 \int_{\T^2}\partial_t\phi\,\dd\nu_j^t
 -\int_{\T^2}\nabla\phi\cdot\dd q_j
 -\int_{\T^2}\phi\,\dd\alpha_j^t .
\end{aligned}
\]
After integration in time,
\begin{align}
 &\int_{\T^2}\phi(\cdot,t_2)\,\dd\nu_j^{t_2}
 -\int_{\T^2}\phi(\cdot,t_1)\,\dd\nu_j^{t_1}
 \notag\\
 &=
 \int_{\T^2\times(t_1,t_2)}
 \partial_t\phi\,\dd(\nu_j^t\dd t)
 -\int_{\T^2\times(t_1,t_2)}
 \nabla\phi\cdot\dd q_j
 -\int_{\T^2\times(t_1,t_2)}
 \phi\,\dd\alpha_j .
 \label{eq:diffuse-tested-energy}
\end{align}

Choose \(t_1,t_2\) in the common full-measure set of slice-convergence
times, outside the countable discontinuity sets of all scalar distribution
functions generated by a countable dense family of spatial test functions.
Then
\[
 \nu_j^{t_i}\stackrel{*}{\rightharpoonup}\nu_{t_i},
 \qquad i=1,2.
\]
The passage below is first performed for product test functions and then
extended to every \(\phi\in C^1(\T^2\times[0,T])\) by uniform
approximation and the uniform mass bounds.
Since \(x\mapsto\phi(x,t_i)\) is continuous,
\[
 \int_{\T^2}\phi(\cdot,t_i)\,\dd\nu_j^{t_i}
 \longrightarrow
 \int_{\T^2}\phi(\cdot,t_i)\,\dd\nu_{t_i}.
\]
The spacetime weak-star convergences
\[
 \nu_j^t\dd t\stackrel{*}{\rightharpoonup}\nu_t\dd t,
 \qquad
 q_j\stackrel{*}{\rightharpoonup}q,
 \qquad
 \alpha_j\stackrel{*}{\rightharpoonup}\alpha
\]
allow us to pass to the limit in the three spacetime terms.  The
characteristic function of \((t_1,t_2)\) may be approximated by continuous
time cutoffs; the choice of \(t_1,t_2\) as continuity points ensures that
the approximation error tends to zero.  Hence
\begin{align}
 &\int_{\T^2}\phi(\cdot,t_2)\,\dd\nu_{t_2}
 -\int_{\T^2}\phi(\cdot,t_1)\,\dd\nu_{t_1}
 \notag\\
 &=
 \int_{\T^2\times(t_1,t_2)}
 \partial_t\phi\,\dd(\nu_t\dd t)
 -\int_{\T^2\times(t_1,t_2)}
 \nabla\phi\cdot\dd q
 -\int_{\T^2\times(t_1,t_2)}
 \phi\,\dd\alpha .
 \label{eq:limit-tested-energy}
\end{align}

\smallskip
\noindent
\emph{Step 2: decomposition of the dissipation and positivity of the
defect.}
Recall that
\[
 q_t=\mathbf q_t\,\lambda_t,
 \qquad
 \lambda_t=\mathcal H^1\llcorner S_t,
\]
and decompose
\[
 \mathbf q_t=q_{n,t}n_t+q_{\tau,t}\tau_t.
\]
By the definitions of the mobility fields,
\[
 \mathcal V_t=-\frac{q_{n,t}}{e_t}n_t,
 \qquad
 \mathcal W_t=-\frac{q_{\tau,t}}{\rho_t}\tau_t.
\]
Therefore
\[
 e_t|\mathcal V_t|^2
 =
 \frac{|q_{n,t}|^2}{e_t},
 \qquad
 \rho_t|\mathcal W_t|^2
 =
 \frac{|q_{\tau,t}|^2}{\rho_t},
\]
with the convention on \(\{\rho_t=0\}\) from
Proposition~\ref{prop:sharp-action}.  Set
\[
 D_t
 :=
 e_t|\mathcal V_t|^2+\rho_t|\mathcal W_t|^2
 =
 \frac{|q_{n,t}|^2}{e_t}
 +\frac{|q_{\tau,t}|^2}{\rho_t}.
\]
The sharp action bound gives
\[
 a\geq D_t
 \qquad\text{for }\lambda_t\dd t\text{-almost every }(x,t).
\]
Using the Lebesgue decomposition
\[
 \alpha=a\,\lambda_t\dd t+\alpha^{\rm s},
 \qquad
 \alpha^{\rm s}\geq0,
\]
define
\[
 \gamma
 :=
 \alpha-D_t\,\lambda_t\dd t.
\]
Then
\begin{equation}\label{eq:gamma-explicit}
 \gamma
 =
 (a-D_t)\lambda_t\dd t+\alpha^{\rm s}\geq0.
\end{equation}
Thus \(\gamma\) is the non-negative dissipation defect, consisting of both
the excess absolutely continuous dissipation and the singular part
\(\alpha^{\rm s}\).

\smallskip
\noindent
\emph{Step 3: identification of the limiting flux.}
The flux decomposition and the two mobility laws give
\[
 -\mathbf q_t
 =
 e_t\mathcal V_t+\rho_t\mathcal W_t
 =
 2\Theta_tH_{S_t}
 +2\nabla_{S_t}\Theta_t.
 \label{eq:flux-density-identification}
\]
Let
\[
 \nabla_{S_t}\phi
 :=
 (\tau_t\otimes\tau_t)\nabla\phi
\]
be the tangential gradient of \(\phi\) along \(S_t\).  Taking the scalar
product of \eqref{eq:flux-density-identification} with \(\nabla\phi\), we
obtain
\[
 -\nabla\phi\cdot\mathbf q_t
 =
 2\Theta_tH_{S_t}\cdot\nabla\phi
 +2\nabla_{S_t}\Theta_t\cdot\nabla\phi.
\]
Since \(\nabla_{S_t}\Theta_t\) is tangential, while
\[
 \nabla\phi
 =
 \nabla_{S_t}\phi
 +(\nabla\phi\cdot n_t)n_t,
\]
the normal component of \(\nabla\phi\) is orthogonal to
\(\nabla_{S_t}\Theta_t\).  Consequently,
\[
 \nabla_{S_t}\Theta_t\cdot\nabla\phi
 =
 \nabla_{S_t}\Theta_t\cdot\nabla_{S_t}\phi,
\]
and hence
\begin{equation}\label{eq:identified-flux-term}
 -\nabla\phi\cdot\mathbf q_t
 =
 2\Theta_tH_{S_t}\cdot\nabla\phi
 +2\nabla_{S_t}\Theta_t\cdot\nabla_{S_t}\phi.
\end{equation}

\smallskip
\noindent
\emph{Step 4: identification of the dissipation terms and conclusion.}
From the mobility identities,
\[
 e_t\mathcal V_t=2\Theta_tH_{S_t},
 \qquad
 \rho_t\mathcal W_t=2\nabla_{S_t}\Theta_t,
\]
we obtain
\begin{align}
 e_t|\mathcal V_t|^2
 &=
 \frac{4\Theta_t^2}{e_t}|H_{S_t}|^2,
 \label{eq:normal-dissipation}\\
 \rho_t|\mathcal W_t|^2
 &=
 \frac{4}{\rho_t}
 |\nabla_{S_t}\Theta_t|^2.
 \label{eq:tangential-dissipation}
\end{align}
On \(\{\rho_t=0\}\), finite action implies
\(q_{\tau,t}=0\), and the mobility law implies
\(\nabla_{S_t}\Theta_t=0\); the second term is therefore understood as
zero.

Substituting
\[
 \nu_t=e_t\lambda_t,
 \qquad
 \alpha=D_t\lambda_t\dd t+\gamma,
\]
together with \eqref{eq:identified-flux-term},
\eqref{eq:normal-dissipation}, and
\eqref{eq:tangential-dissipation}, into
\eqref{eq:limit-tested-energy}, we obtain
\eqref{eq:localized-energy-identity}.  Since
\(\phi\geq0\) and \(\gamma\geq0\), the defect term satisfies
\[
 -\int_{\T^2\times(t_1,t_2)}\phi\,\dd\gamma\leq0.
\]
Dropping this non-positive term gives
\eqref{eq:localized-flow-inequality}.
\end{proof}

\begin{rem}[The scalar case and the absence of tangential microstructure]
\label{rem:no-tangential-microstructure}
When \(\rho_t=0\), the sharp action bound implies
\[
 q_{\tau,t}=0.
\]
The tangential mobility law then gives
\[
 \nabla_{S_t}\Theta_t=0,
\]
so that the interfacial weight is constant along each regular branch.  Since
\[
 e_t=2\Theta_t,
\]
the normal mobility law becomes
\[
 \mathcal V_t=H_{S_t},
\]
and there is no tangential internal-mobility channel.

In particular, in the scalar equipartition regime, with
\[
 \nu_t=e_t\,\mathcal H^1\llcorner S_t,
 \qquad
 H_t:=H_{S_t},
\]
the localized energy inequality takes the classical Brakke form
\[
 \int_{\T^2}\phi(\cdot,t_2)\,\dd\nu_{t_2}
 -
 \int_{\T^2}\phi(\cdot,t_1)\,\dd\nu_{t_1}
 \leq
 \int_{t_1}^{t_2}\int_{\T^2}
 \left(
 \partial_t\phi
 +\nabla\phi\cdot H_t
 -\phi|H_t|^2
 \right)
 \,\dd\nu_t\dd t,
\]
for every non-negative
\(\phi\in C^1(\T^2\times[0,T])\).  This is the Brakke inequality
arising in the scalar Allen--Cahn limit, as in
Ilmanen~\cite{Ilmanen1993}.
\end{rem}

\subsection{Balance at finite junctions}

The stress identity also determines the balance law at a junction.  The
argument is local and uses the fact that the momentum measure has no atom
at the junction point.

\begin{prop}[Finite junction balance]
\label{prop:junction-balance}
Fix a good time \(t\).  Suppose that, in a neighborhood \(U\) of
\(p\in S_t\), the set \(S_t\) is the union of finitely many embedded
\(C^1\) half-arcs
\[
 \Gamma_{\ell,t}=\gamma_{\ell,t}([0,r_\ell)),
 \qquad
 \gamma_{\ell,t}(0)=p,
\]
which meet only at \(p\).  Orient each branch away from \(p\), and let
\[
 \tau_{\ell,t}
 :=\lim_{s\downarrow0}\gamma_{\ell,t}'(s)
\]
be its outward unit tangent.  Assume that \(\Theta_t\) has a positive
one-sided trace
\[
 \Theta_{\ell,t}
 :=\lim_{s\downarrow0}
 \Theta_t(\gamma_{\ell,t}(s))>0
\]
on each branch.  Then
\begin{equation}\label{eq:junction-balance}
 \sum_\ell\Theta_{\ell,t}\tau_{\ell,t}=0.
\end{equation}
\end{prop}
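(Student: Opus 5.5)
The plan is to test the slice stress identity \(\operatorname{div}_xT_t=-q_t\) of Proposition~\ref{prop:parabolic-bethuel-relations} against a constant vector field localized near \(p\), and to let the localization shrink. Fix \(e\in\R^2\) and a radial cutoff \(\chi_r(x)=\chi(|x-p|/r)\), where \(\chi\in C_c^1([0,1))\) equals \(1\) near \(0\). Take \(X=\chi_r e\). Since \(T_t=2\Theta_t\tau_t\otimes\tau_t\lambda_t\), the distributional identity gives
\[
 \int_{S_t}2\Theta_t\,(\tau_t\cdot e)\,(\tau_t\cdot\nabla\chi_r)\,\dd\lambda_t
 =\int\chi_r\,e\cdot\dd q_t .
\]
For \(r\) small, \(B_r(p)\subset U\), so \(S_t\cap B_r(p)\) is the union of the half-arcs \(\Gamma_{\ell,t}\). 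The left side therefore splits as a finite sum of one-dimensional integrals over \(s\in[0,r_\ell)\) along \(\gamma_{\ell,t}\).

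The right-hand side tends to \(0\). Indeed, \(|\int\chi_re\cdot\dd q_t|\le|q_t|(B_r(p))\to|q_t|(\{p\})\). This limit vanishes because \(q_t\ll\nu_t\ll\cH^1\llcorner S_t\) by Proposition~\ref{prop:parabolic-bethuel-relations}, and \(\cH^1(\{p\})=0\).

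For the left side, use that along \(\gamma_\ell\) we have \(\tau_t\cdot\nabla\chi_r(\gamma_\ell(s))=\partial_s[\chi_r(\gamma_\ell(s))]\), since \(\tau_t=\gamma_\ell'\) up to orientation, and the product \((\tau_t\cdot e)\tau_t\) is orientation independent. The branch-\(\ell\) term is then
\[
 \int_0^{r_\ell}2\Theta_t(\gamma_\ell(s))\,(\gamma_\ell'(s)\cdot e)\,\partial_s\bigl[\chi_r(|\gamma_\ell(s)-p|/r)\bigr]\dd s .
\]
Since \(\gamma_\ell\) is \(C^1\) with \(\gamma_\ell(0)=p\) and \(\gamma_\ell'(0)=\tau_{\ell,t}\), we have \(|\gamma_\ell(s)-p|=s(1+o(1))\) and \(\gamma_\ell'(s)\to\tau_{\ell,t}\). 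The function \(\psi_r(s):=\chi(|\gamma_\ell(s)-p|/r)\) decreases from \(1\) to \(0\) on an interval \(s\in[0,Cr]\), and its total variation there is \(1+o(1)\). Using the trace \(\Theta_t(\gamma_\ell(s))\to\Theta_{\ell,t}\) together with continuity of \(\gamma_\ell'\), the integrand's prefactor equals \(2\Theta_{\ell,t}(\tau_{\ell,t}\cdot e)+o(1)\) uniformly on the support of \(\psi_r'\). The term therefore converges to
\[
 2\Theta_{\ell,t}(\tau_{\ell,t}\cdot e)\int_0^\infty\psi_r'\,\dd s=-2\Theta_{\ell,t}\,\tau_{\ell,t}\cdot e .
\]
To be careful, I would replace \(\psi_r'\) by its absolute value and bound the error by \(o(1)\cdot\int|\psi_r'|\). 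This requires \(\psi_r\) to have bounded variation uniformly in \(r\), which follows from \(\frac{d}{ds}|\gamma_\ell(s)-p|\to1\), so that \(\psi_r\) is monotone for small \(r\).

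Summing over \(\ell\) and letting \(r\downarrow0\) gives \(-2\sum_\ell\Theta_{\ell,t}\tau_{\ell,t}\cdot e=0\) for every \(e\), which is \eqref{eq:junction-balance}. The main obstacle is this passage to the limit on each branch. It needs the uniform control of \(\partial_s\chi_r(\gamma_\ell)\) near \(s=0\), where only \(C^1\) regularity and one-sided traces of \(\Theta_t\) are available, rather than the \(W^{1,2}\) regularity of regular branches. One also has to confirm that \(\Theta_t\) is bounded near \(p\) on each branch, which the existence of the trace provides. The vanishing of the momentum atom is immediate from the absolute continuity established earlier.
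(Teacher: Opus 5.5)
Your argument is correct, and it takes a different route from the paper's.

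\textbf{The paper's route.} The paper first passes to the one-dimensional equation \(\partial_s(\Theta_t\gamma_{\ell,t}')=-\tfrac12\mathbf q_t(\gamma_{\ell,t})\) on each half-arc, which is \eqref{eq:weighted-tangent-derivative} carried over from the regular-branch analysis. From this it concludes that \(g_{\ell,t}=\Theta_t\gamma_{\ell,t}'\) is \(W^{1,1}\) up to \(s=0\). It then integrates by parts along every branch against an arbitrary \(X\in C^1_c(U;\R^2)\). This isolates the boundary term \(-2\sum_\ell g_{\ell,t}(0+)\cdot X(p)\), and the bulk terms cancel against the global identity \(\int T_t:DX=\int X\cdot\dd q_t\).

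\textbf{Your route.} You never use the branchwise equation. You test the global identity only with \(\chi_r e\) and let the support collapse onto \(p\). The bulk momentum then disappears through \(|q_t|(B_r(p))\to|q_t|(\{p\})=0\). The boundary term appears as the limit \(\int f\,\psi_r'\,\dd s\to-f(0+)\), with \(f=2\Theta_t(\gamma_{\ell,t}'\cdot e)\).

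\textbf{What each buys.} Your route is a direct blow-up computation. It needs only the global stress identity and the stated hypotheses, namely one-sided limits of \(\Theta_t\) and \(\gamma_{\ell,t}'\). The paper's route needs the branch ODE on the open half-arcs and integrability of \(\mathbf q_t\) up to \(s=0\). In exchange, the \(W^{1,1}\) regularity of \(g_{\ell,t}\), together with \(\Theta_t=|g_{\ell,t}|\), makes the existence of the one-sided limit of \(\Theta_t\) a consequence rather than an assumption.

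\textbf{Two small fixes in your limit.}
\begin{enumerate}
\item Monotonicity of \(\psi_r\) requires choosing \(\chi\) non-increasing. Alternatively, the substitution \(\rho=|\gamma_{\ell,t}(s)-p|\), which is increasing in \(s\) near \(0\), gives \(\int|\psi_r'|\,\dd s=\int_0^1|\chi'|\) for any admissible \(\chi\).
\item The inclusion \(\operatorname{supp}\psi_r\subset[0,Cr]\) also uses that the arcs are embedded. Embeddedness keeps \(\gamma_{\ell,t}(s)\) a positive distance from \(p\) for \(s\) bounded away from \(0\). The expansion \(|\gamma_{\ell,t}(s)-p|=s(1+o(1))\) only controls small \(s\).
\end{enumerate}
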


\begin{proof}
Because
\[
 q_t=\mathbf q_t\lambda_t
\]
and \(\lambda_t=\cH^1\llcorner S_t\) has no atoms, we have
\[
 q_t(\{p\})=0.
\]
On the interior of the \(\ell\)-th branch, set
\[
 g_{\ell,t}(s)
 :=
 \Theta_t(\gamma_{\ell,t}(s))
 \gamma_{\ell,t}'(s).
\]
The one-dimensional form of the stress identity, already obtained in
\eqref{eq:weighted-tangent-derivative}, is
\begin{equation}\label{eq:branch-momentum-identity}
 \partial_sg_{\ell,t}
 =
 -\frac12
 \mathbf q_t(\gamma_{\ell,t}(s))
\end{equation}
in distributions.  In particular, \(g_{\ell,t}\) has a
\(W^{1,1}_{\rm loc}\) representative on each half-arc and its one-sided
trace at \(s=0\) is
\[
 g_{\ell,t}(0+)
 =\Theta_{\ell,t}\tau_{\ell,t}.
\]

Let \(X\in C_c^1(U;\R^2)\).  Using the representation of the stress on the
branches and integrating by parts from the junction outward, we obtain
\begin{align}
 \int_U T_t:DX
 &=
 2\sum_\ell
 \int_0^{r_\ell}
 g_{\ell,t}(s)\cdot
 \partial_s\bigl(X\circ\gamma_{\ell,t}\bigr)(s)\dd s
 \notag\\
 &=
 -2\sum_\ell
 g_{\ell,t}(0+)\cdot X(p)
 -2\sum_\ell
 \int_0^{r_\ell}
 \partial_sg_{\ell,t}(s)\cdot
 X(\gamma_{\ell,t}(s))\dd s.
 \label{eq:junction-integration-by-parts}
\end{align}
There is no contribution from the outer endpoints because \(X\) is
compactly supported in \(U\).  By
\eqref{eq:branch-momentum-identity}, the second term on the right-hand side
of \eqref{eq:junction-integration-by-parts} equals
\[
 \int_U X\cdot\dd q_t.
\]
On the other hand, the global stress identity
\(\diver T_t=-q_t\) gives directly
\[
 \int_U T_t:DX=\int_U X\cdot\dd q_t.
\]
Comparing the two formulas yields
\[
 \sum_\ell
 g_{\ell,t}(0+)\cdot X(p)=0.
\]
Since \(X(p)\in\R^2\) is arbitrary,
\[
 \sum_\ell g_{\ell,t}(0+)=0.
\]
Substituting
\(g_{\ell,t}(0+)=\Theta_{\ell,t}\tau_{\ell,t}\)
proves \eqref{eq:junction-balance}.
\end{proof}

\subsection{Completion of the proof of the main theorem}

\begin{proof}[Proof of Theorem~\ref{thm:main}]
The proof is assembled as follows.

Lemma~\ref{lem:phase-compactness} gives the spacetime convergence to a
Caccioppoli partition.  Lemma~\ref{lem:energy-time-compactness} and
Proposition~\ref{prop:spacetime-disintegration} give the time-slice and
spacetime measure compactness, together with the disintegrations of the
potential and spatial-gradient measures.

Proposition~\ref{prop:time-slice-rectifiability} gives the rectifiability of
the limiting energy measure for almost every time.  Proposition~
\ref{prop:parabolic-bethuel-relations} identifies the limiting stress,
proves the Bethuel relations, and yields
\[
 \mu_t
 =
 \rho_t\tau_t\otimes\tau_t\,\lambda_t
 +e_tn_t\otimes n_t\,\lambda_t.
\]

Proposition~\ref{prop:sharp-action} applies positivity of the spacetime
Gram matrix to this anisotropic spatial-gradient measure and proves the
sharp action lower bound \eqref{eq:sharp-action-bound}.
Proposition~\ref{prop:mobility-identities} identifies the limiting momentum
with the weighted first variation, establishes the curvature splitting,
and proves the two mobility laws \eqref{eq:two-mobility-law}.
Finally, Proposition~\ref{prop:localized-energy} passes the exact diffuse
local energy identity to the limit and proves both the sharper identity
with the defect measure \(\gamma\) and the localized inequality
\eqref{eq:localized-flow-inequality}.

These conclusions establish all assertions of
Theorem~\ref{thm:main}.
\end{proof}
\section*{Acknowledgments}

This work is partially supported by the National Key R\&D Program of China
under Grant 2023YFA1008801.

The authors acknowledge the use of AI tools. All mathematical arguments and
proofs in the final manuscript were checked and written by the authors.

\nocite{*}
\bibliographystyle{amsplain}
\bibliography{bib}

\end{document}